\journalname{Communications in Mathematical Physics}
\newcommand{\mod}{\textrm{ mod }}
\begin{document}
\title{{The Decomposition Formula and Stationary Measures for Stochastic Lotka-Volterra Systems with Applications to Turbulent Convection}}
\titlerunning{Decomposition Formula and Stationary Measures for SLVS}
\author{Lifeng Chen\inst{1} \and Zhao Dong\inst{2}\and Jifa Jiang\inst{3}\fnmsep\inst{*}\and Lei Niu\inst{4}\and Jianliang Zhai\inst{5}}
%
\institute{Mathematics and Science College, Shanghai Normal University, Shanghai 200234, PRC. \\ \email{1000360929@smail.shnu.edu.cn}
\and Academy of Mathematics and Systems Science, Chinese Academy of Sciences, Beijing 100190, PRC. \email{dzhao@amt.ac.cn}
\and Mathematics and Science College, Shanghai Normal University, Shanghai 200234, PRC. \\ \email{jiangjf@shnu.edu.cn}. Corresponding author
\and Mathematics and Science College, Shanghai Normal University, Shanghai 200234, PRC. \\ \email{1000342830@smail.shnu.edu.cn}
\and Wu Wen-Tsun Key Laboratory of Mathematics, USTC, Chinese Academy of Sciences, Hefei Anhui 230026, PRC. \email{zhaijl@ustc.edu.cn}
}
\authorrunning{L. Chen\and Z. Dong\and J. Jiang\and L. Niu\and J. Zhai}

\date{}
%
%
\maketitle
\begin{abstract}
Motivated by the work of Busse et al. \cite{Busse1980science} on turbulent convection in a rotating layer, we exploit the long-run behavior for stochastic Lotka-Volterra (LV) systems both in pull-back trajectory and in stationary measure. It is proved stochastic decomposition formula describing the relation between solutions of stochastic and deterministic LV systems and stochastic Logistic equation. By virtue of this formula, it is verified that every pull-back omega limit set is an omega limit set for deterministic LV  systems multiplied by the random equilibrium of the stochastic Logistic equation. This formula is used to derive the existence of a stationary measure, its support and ergodicity. We prove the tightness for the set of stationary measures and the invariance for their weak limits as the noise intensity vanishes, whose supports are contained in the Birkhoff center.

The developed theory is successfully utilized to completely classify three dimensional competitive stochastic LV systems into $37$ classes. Time average probability measures weakly converge to an ergodic stationary measure on the attracting domain of an omega limit set in all classes except class 27 c). Among them there are two classes possessing a continuum of random closed orbits and ergodic stationary measures supported in cone surfaces, which weakly converge to the Haar measures of periodic orbits  as the noise intensity vanishes.  In the exceptional class, almost every pull-back trajectory  cyclically oscillates around the boundary of the stochastic carrying simplex characterized by three unstable stationary solutions. The limit for the time average probability measures is neither unique nor ergodic. These are subject to turbulent characteristics.
\end{abstract}
\vspace{11mm}
\setcounter{tocdepth}{3}
\tableofcontents

\section{Introduction}
 Turbulent convection  in a fluid layer heated from below  and  rotating about a vertical axis was studied by Busse et al. \cite{BusseExample,Busse1980science,Busse1980nonlinear}. They proved that turbulence occurs when both the Rayleigh number and the Taylor number exceed their critical values. This kind of turbulence is understood in terms of a manifold of stationary solutions, each of which is unstable relative to some other solution in the manifold so that the system evolves in time by realizing cyclically the different solutions of the manifold. This cyclically fluctuating solution was achieved  by May and Leonard \cite{May1} in the context of population biology,  which was confirmed by experiment in \cite{Busse1980science,Busse1980nonlinear}.

 Using the depth $d$ of the layer, the temperature difference between the upper and lower boundaries divided by the Rayleigh number $\frac{(T_2-T_1)}{R}$, and the thermal diffusion time $\frac{d^2}{\kappa}$ as scales for length, temperature and time, respectively, the convection model described as above is formulated by the Navier-Stokes equations for the velocity vector $\bold{v}$ and the heat equation for the deviation $\theta$ of the temperature from the static state:
 \begin{equation}
    \label{sys:01}
    \left\{
        \begin{array}{l}
            P^{-1}(\frac{\partial}{\partial t}+\mathbf{v}\cdot\nabla)\mathbf{v}+\frac{\sqrt{T}}{2}\mathbf{\lambda}\times \mathbf{v}=-\nabla\pi+\mathbf{\lambda}\theta+\nabla^2\mathbf{v}, \\
            \nabla\cdot \mathbf{v}=0, \\
            (\frac{\partial}{\partial t}+\mathbf{v}\cdot \nabla)\theta=R\mathbf{\lambda}\cdot \mathbf{v}+\nabla^2\theta,
        \end{array}
    \right.
\end{equation}
 where $\mathbf{\lambda}=(0,0,1)^{\tau}$. The physical state of the layer is represented in terms of three dimensionless parameters: the Rayleigh, Taylor, and Prandtl numbers
 $$ R=\frac{g\gamma(T_2-T_1)d^3}{\kappa\nu},\ \ T=\frac{4\Omega^2d^4}{\nu^2},\ \ {\rm and}\ \ P=\frac{\nu}{\kappa}.$$
 Here $\gamma,\ g,\ \kappa$ and $\nu$ are the thermal expansion coefficient, the gravitational acceleration constant, the thermal diffusivity and the kinematic viscosity, respectively. $\Omega$ is the angular velocity rotating about the vertical axis through the center of the layer. A stress-free condition is applied at the boundaries.

 The vertical component of the velocity field in the limit of small amplitudes can be approximately expressed by
\begin{equation}\label{vz}
\bold{v}_z =f(z,\alpha_c)\sum_{j=-n}^nc_j(t){\rm exp}(i\bold{k}_j\cdot \bold{r}),
\end{equation}
where $z$ (normalized between $-\frac{1}{2}$ and $\frac{1}{2}$) is the vertical component of  the position vector $\bold{r}$, $\alpha_c$ is the critical wave number predicted by linear analysis, $c_j(t)$ is the time-dependent amplitude and $\bold{k}_j$ is the horizontal wave vector. The equation for $f(z,\alpha_c)$ together with the boundary conditions at $z=\pm \frac{1}{2}$ represents an eigenvalue problem for $R(\alpha_c)$. At a finite value $\alpha_c$ this function reaches its minimum $R_c$ at which the onset of convection occurs. The horizontal wave vectors are subject to the conditions $|\bold{k}_j|=\alpha_c$ and $\bold{k}_{-j}=-\bold{k}_j$. The time-dependent amplitudes $c_j(t)$ are subject to the conditions $c_j(t)=-c_j^*(t)$, where $c_j^*(t)$ denotes the complex conjugate of $c_j(t)$ (see \cite{BusseExample,Busse1980nonlinear} for more details).  Then it follows from \cite{BusseExample} that $c_i$ satisfies the equations
\begin{equation}\label{amplitude}
M\frac{dc_i}{dt}=c_i\{(R-R_c)K-\frac{1}{2}\sum_{j=-n}^nT_{ij}\mid c_j\mid^2\}, \ \ i=1,2,...,n
\end{equation}
where the matrix elements $T_{ij}$ obey the symmetry relationships
\begin{equation}\label{symmetry}
T_{ij}=T_{i-j}=T_{-ij}.
\end{equation}
When the Rayleigh number $R$ exceeds the critical value $R_c$ depending on the Taylor number $T$, the static state becomes unstable and convective motions set in.   Restricting the analysis to the case $n=3$, setting $y_i=|c_i|^2$ for $i=1,2,3$ and making suitable renormalization,  Busse et al. \cite{BusseExample,Busse1980science,Busse1980nonlinear} transformed (\ref{amplitude}) into the standard {\it symmetric May-Leonard system} \cite{May1}:
\begin{equation}\label{sys1}
    \begin{array}{l}
        \displaystyle\frac{d y_1}{dt}=y_1(1- y_1-\alpha y_{2}-\beta y_{3}),\\
        \noalign{\medskip}
        \displaystyle\frac{d y_2}{dt}=y_2(1-\beta y_1-y_{2}-\alpha y_{3}), \ 0<\alpha<1<\beta,\\
        \noalign{\medskip}
        \displaystyle\frac{d y_3}{dt}=y_3(1-\alpha y_1-\beta y_{2}-y_{3}),
    \end{array}
\end{equation}
where $y_{i}\geq 0,\ i=1,2,3$.
If the Taylor number $T$ exceeds the critical value $T_c$, then $\alpha +\beta>2$ and $\alpha<1$.  A well-known result of  May-Leonard \cite{May1} applies here to arrive at that system (\ref{sys1}) exhibits nonperiodic
oscillations of bounded amplitude which have an ever increasing
cycle time.  Hence, Busse et al. \cite{BusseExample,Busse1980science,Busse1980nonlinear} concluded that turbulent convection in a rotating layer is approximately described by a manifold of three stationary solutions, all of which are unstable with respect to each other. Such a manifold is spanned by the three axial equilibria and approached rapidly from arbitrary initial conditions. We interestingly point out that this manifold is just the {\it carrying simplex} which would be founded by Hirsch \cite{H88} nearly ten years later.

However,  Busse and Heikes  \cite[p.174]{Busse1980science} pointed out:``small-amplitude disturbances are present at all times and not just at the initial moment, $\cdots$. The existence of a noise level prevents the amplitudes $y_i$ from decaying to arbitrary small levels. At the same time it introduces a random element into the time dependence of the system; $\cdots$.'' In the second paragraph of introduction, Heikes and Busse \cite{Busse1980nonlinear} clearly showed that this randomness occurs for Rayleigh number $R$ close to the critical value for the onset of convection, $R_c$. These statements mean that  the deviation of the Rayleigh number and its critical value is an average size in some sense. Affected by randomness,  Heikes and Busse \cite[p.31]{Busse1980nonlinear} expected that the transition from one set of rolls (stationary solutions) to the next becomes nearly periodic, with a transition time which fluctuates statistically about a mean value.

This stimulates us to exploit that whether stochastic version of cyclically fluctuating solution exists when $R - R_c$ is perturbed by a white noise
   $$(R - R_c)+\sigma B_t(\omega).$$
   Let $y_i=|c_i|^2$ for $i=1,2,...,n$ and rescale the time by $Mt$. Then the perturbed equations read
   \begin{equation}\label{slv}
   {\rm (E_{\sigma})}:dy_i = y_i(r+\displaystyle\sum^n_{j=1}a_{ij}y_j)dt + \sigma y_idB_t,\
 i=1,2,...,n
 \end{equation}
  on the positive orthant, where $r=(R - R_c)K$,  $a_{ij}=-T_{ij}$ and $\sigma$ are parameters, $B_t$ is a Brownian motion. ${\rm (E_{\sigma})}$ is called It\^{o} stochastic differential equations.
Our main purpose is to investigate an approach to prove the existence of cyclically fluctuating solutions in path for three dimensional system (\ref{slv}) provided that the Rayleigh number and the Taylor number exceed their critical values, and that these cyclically fluctuating solutions concentrate around each axial equilibrium (it is a saddle), which theoretically shows that turbulence still occurs under stochastic disturbances.  The system ${\rm (E_{\sigma})}$  is  well-known Lotka-Volterra equations with the growth rate $r$ and a white noise perturbation. Lotka-Volterra equations have been playing an important role in  population dynamics and game dynamics (see \cite{May,hofbauer1988}).

 Following the pioneer work of Khasminskii \cite{KHAS,KHAS1} for locally Lipschitz coefficients, the
existence and uniqueness of regular stationary measures for stochastic ordinary differential equations in $\mathbf{R}^n$ have been extensively studied. However, the feasible domain for ${\rm (E_{\sigma})}$ is the positive orthant of $\mathbf{R}^n$. Recently, in a general domain of $\mathbf{R}^n$, Huang, Ji, Liu and Yi \cite{HJLY1,HJLY2,HJLY3,HJLY4,HJLY5} have systematically investigated stationary measures for stochastic ordinary differential equations via Fokker-Plank equations. In \cite{HJLY1}, they provide several useful measure estimates of complement of a compact subset of regular stationary measures, such estimates can be used to obtain tightness of a family of stationary measures. Their key technique tool is the level set method, in particular, the integral identity they prove. This tool is used to give several new existence, respectively, non-existence results for stationary measures by applying Lyapunov-like, respectively, anti-Lyapunov-like functions in \cite{HJLY2,HJLY3,HJLY4}. Their results only need weaker regularity conditions and are applicable to both non-degenerate and degenerate equations. All they consider are regular stationary measures. Bogachev, R\"{o}ckner and Stannat \cite{BRS} and Shaposhnikov \cite{SS} provided examples to admit multiple or even infinite stationary measures, and more results stated in the paper of literature reviews \cite{BKR}.
For the attractors for random dynamical systems, one can refer the works of \cite{A,CDF,CF,CH} and the references therein.
Huang, Ji, Liu and Yi \cite{HJLY5} show that limiting measures as diffusion matrices vanish are invariant for the flow generated by drift vector field, and are supported in the global attractor of the flow if  the flow generated by drift vector field is dissipative.  Due to the limitation on the size of this article, the list of references contains less than a quarter of the bibliography we collected.

The purpose of this paper is generally to exploit the long-run behavior for stochastic Lotka-Volterra  systems ${\rm (E_{\sigma})}$  both in limit of pull-back trajectories and in stationary measures.  Motivated by \cite{JN}, we will first investigate the relation between solutions of ${\rm (E_{\sigma})}$ and those of  ${\rm (E_0)}$ and stochastic Logistic equation
\begin{equation}\label{stoLog}
dg=g(r -rg)dt +\sigma gdB_t
\end{equation}
and prove the Stochastic Decomposition Formula
 in the sense that
\begin{equation}\label{decom}
 \Phi(t,\omega,y)=g(t,\omega,g_0)\Psi(\int_0^tg(s,\omega,g_0)ds,\frac{y}{g_0}),
\end{equation}
where $\Phi(t,\omega,y)$, $ \Psi(t,y)$ are the solutions of
 ${\rm (E_{\sigma})}$ and ${\rm (E_0)}$ through the initial point $y$, respectively, and $g(t,\omega,g_0)$
 is the solution of  (\ref{stoLog}) through the initial point $g_0>0$.

 The stochastic decomposition formula (\ref{decom}) will play an important role to achieve our goal. By virtue of this formula, it is verified that every pull-back omega limit set for ${\rm (E_{\sigma})}$ is an omega limit set of ${\rm (E_{0})}$  multiplied by the random equilibrium of the scalar stochastic Logistic equation (\ref{stoLog}). We also investigate the weak convergence for the transition probability function of solution process as the time tends to infinity. Using the stochastic decomposition formula (\ref{decom}), the Khasminskii theorem  \cite[p.65]{KHAS} and  the Portmanteau theorem, it is shown that a bounded orbit for ${\rm (E_{0})}$  deduces the existence of a stationary measure for  ${\rm (E_{\sigma})}$ supported in a lower dimensional cone consisting of all rays connecting the origin and all points in the omega limit of this orbit. This means that any stationary measure is  not regular and the system possesses a continuum of stationary measures or multiple isolated stationary measures.  Furthermore, we provide the necessary and sufficient conditions for Markov semigroup to have a unique and ergodic stationary measure on some lower dimensional manifolds. If ${\rm (E_{0})}$  is dissipative, then we prove that the set of stationary measures with small noise intensity is tight, and that their limiting measures in weak topology are invariant with respect to the flow of  ${\rm (E_{0})}$  as the noise intensity $\sigma$ tends to zero, whose supports are contained in the Birkhoff center of ${\rm (E_{0})}$. This means that on the global attractor of  ${\rm (E_{0})}$ any limiting measure takes the complement of the Birkhoff center measure zero. In section 6,  we provide a complete classification for three dimensional competitive ${\rm (E_{\sigma})}$ with identical intrinsic growth rate both in pull-back trajectory and in stationary motion. There are exactly 37 scenarios in terms of competitive coefficients. Among them,  each pull-back trajectory in 34 classes is asymptotically stationary, but possibly different stationary solution for different trajectory in same class. All stationary measures are given for every system in these 34 classes, all limiting measures are the convex combination of its Dirac measures at equilibria. Two of the remain classes possess a family of stochastic closed orbits, and a continuum of ergodic stationary measures supported in  cone surfaces decided by periodic orbits for ${\rm (E_{0})}$, respectively, which weakly converge to the Haar measures of periodic orbits  as the noise intensity tends to zero.  All stationary measures are decided by these ergodic stationary measures via ergodic decomposition theorem.  Among all above 36 classes, for any given solution of ${\rm (E_{\sigma})}$ through a point $y$, its time average probability measure for transition probability function weakly converges to a stationary measure, which is ergodic in the attracting domain of the omega limit set for $y$. The final class, the most interesting and complicated one, violates this law. The time average probability measure for transition probability function of a solution not passing through the ray connecting the origin and the positive equilibrium of ${\rm (E_{0})}$ does not weakly converge, but has infinite limit measures which are not ergodic and support in three positive axes. As the noise intensity tends to zero, these stationary measures weakly converge to a convex combination of Dirac measures on three unstable axis equilibria.  We will reveal that the essential reason for both peculiar characteristics is that solutions concentrate around three equilibria very long time (approximately infinite) with probability nearly one. Besides, almost every pull-back trajectory  cyclically oscillates around the boundary of the stochastic carrying simplex which is characterized by three unstable stationary solutions.  All these are subject to turbulent characteristics. This rigorously proves that a stochastic version for so called statistical limit cycle exists and that the turbulence in a fluid layer heated from below and rotating about a vertical axis is robust under stochastic disturbances.

Here and throughout of this article, we will use notation
$\mathbf{R}^n_+:=\{y\in \mathbf{R}^n: y=(y_1,y_2,...,y_n),\ y_i\geq 0,\ i=1,2,...,n\} $
to denote the positive orthant, and its interior denotes ${\rm Int}\mathbf{R}_+^n$.
\section{Stochastic Decomposition Formula}
  A continuous random process $B=\{B_t
, \mathcal{F}_t:t\in \mathbf{R}\}$, defined on some probability
space $(\Omega,\mathcal{F},\mathbb{P})$, is called a two-sided time
{\it Brownian motion}, if $B_0=0$ a.s., for any $t\neq s$, $B_t-B_s$
is normally distributed with mean zero and variance $|t-s|$,
and $B$ has independent increments. In a canonical way, we may
assume $\Omega=C_0(\mathbf{R},\mathbf{R})$ endowed with the
compact-open topology, and $\mathcal{F}$ is its Borel
$\sigma-$algebra, $\mathbb{P}$ is Wiener measure on
$(\Omega,\mathcal{F})$, and $B_t(\omega)$ can be regarded as
coordinate process $\omega(t)$. We define the shift by
$$\theta_t:\Omega \rightarrow \Omega,\
\theta_t\omega(s):=\omega(s+t)-\omega(t),\ s,t\in \mathbf{R}.$$ Then
$\theta_t$ is a homeomorphism for each $t$ and
$(t,\omega)\rightarrow \theta_t\omega$ is continuous, hence
measurable. Thus the Brownian motion generates an ergodic metric
dynamical system $(\Omega,\mathcal{F},\mathbb{P},\{\theta_t:t\in
\mathbf{R}\})$ (see the Appendix A.3 in Arnold \cite{A} for details).

\begin{theorem}[Stochastic Decomposition Formula for It\^{o} Type]\label{isdf}
Let $\Phi(t,\omega,y)$ and $\Psi(t,y)$  be the solutions of
$(E_\sigma)$ and $(E_0)$, respectively.
 Then
\begin{equation}\label{sdfi}
   \Phi(t,\omega,y)=  g(t,\omega,g_0)\Psi(\int_0^tg(s,\omega,g_0)ds, \frac{y}{g_0}), \ y\in\mathbf{R}^n_+, \ g_0> 0,
\end{equation}
where $g(t,\omega,g_0)$ is a positive solution of the Logistic
equation
\begin{equation}\label{slogistic}
dg =  g(r- rg)dt + \sigma gdB_t,\ g(0,\omega,g_0)=g_0.
\end{equation}
\end{theorem}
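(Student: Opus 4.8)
The plan is to verify the identity by the standard guess-and-check strategy for stochastic flows: I exhibit the right-hand side of (\ref{sdfi}) as an explicit It\^o process, show that it solves the initial value problem associated with $(E_\sigma)$ and the datum $y$, and then invoke pathwise uniqueness for $(E_\sigma)$ (whose coefficients are locally Lipschitz polynomials) to identify it with $\Phi(t,\omega,y)$. The only genuine difficulty is to discover the correct ansatz; once the simultaneous multiplicative rescaling by $g$ and the random time change by $\int_0^t g\,ds$ are in hand, the verification is direct.

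First I would introduce the pathwise random time change $\tau(t,\omega):=\int_0^t g(s,\omega,g_0)\,ds$. Since for $g_0>0$ the scalar Logistic equation (\ref{slogistic}) admits an a.s.\ positive solution, obtainable explicitly via the substitution $h=1/g$ which linearizes it, the map $t\mapsto\tau(t,\omega)$ is a.s.\ $C^1$ and strictly increasing with $\frac{d}{dt}\tau=g$. Setting $\psi(t):=\Psi\!\left(\tau(t,\omega),y/g_0\right)$ and using that $\Psi$ solves $(E_0)$, the classical chain rule (applied $\omega$-wise, $\tau$ being differentiable in $t$) yields the pathwise identity
\[
 d\psi_i = g(t,\omega,g_0)\,\psi_i\Big(r+\sum_{j=1}^n a_{ij}\psi_j\Big)\,dt ,
\]
so that each $\psi_i$ is a process of finite variation carrying no martingale part.

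Next I would apply the It\^o product rule to $\widetilde{y}_i(t):=g(t,\omega,g_0)\,\psi_i(t)$. Because $\psi_i$ has no martingale part, the cross-variation $\langle g,\psi_i\rangle$ vanishes, so
\[
 d\widetilde{y}_i = \psi_i\,dg + g\,d\psi_i
 = \psi_i\big[g(r-rg)\,dt+\sigma g\,dB_t\big]
 + g^2\psi_i\Big(r+\sum_j a_{ij}\psi_j\Big)\,dt .
\]
Substituting $g\psi_i=\widetilde{y}_i$ and $g\psi_j=\widetilde{y}_j$ everywhere, the term $-rg\,\widetilde{y}_i$ coming from the quadratic part of the Logistic drift cancels exactly the term $+rg\,\widetilde{y}_i$ coming from the time-change drift $g^2\psi_i\,r$; this cancellation is precisely why the Logistic equation (\ref{slogistic}) must carry the \emph{same} rate $r$ in both its linear and its quadratic term. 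What remains is
\[
 d\widetilde{y}_i = \widetilde{y}_i\Big(r+\sum_j a_{ij}\widetilde{y}_j\Big)\,dt + \sigma\,\widetilde{y}_i\,dB_t ,
\]
which is exactly $(E_\sigma)$.

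Finally, evaluating at $t=0$ gives $\tau(0)=0$, hence $\psi(0)=\Psi(0,y/g_0)=y/g_0$ and $\widetilde{y}(0)=g_0\cdot(y/g_0)=y$; thus $\widetilde{y}(\cdot)$ and $\Phi(\cdot,\omega,y)$ solve the same initial value problem, and pathwise uniqueness forces (\ref{sdfi}). I expect the computation to be routine It\^o calculus; the two points deserving care are (i) confirming that $\tau(t,\omega)$ is well defined and that $\Psi(\tau(t,\omega),\cdot)$ remains inside the maximal existence interval of $(E_0)$ — which, together with the positivity of $g$, also shows that the lifespans of $\Phi$ and of $\Psi\circ\tau$ match under the time change — and (ii) the justification that $\psi$ has finite variation so that no spurious It\^o correction enters the product rule. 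The conceptual obstacle, as noted, lies entirely in guessing the decomposition rather than in verifying it.
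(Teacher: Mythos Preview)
Your proof is correct and follows essentially the same route as the paper: both define the candidate process as the right-hand side of (\ref{sdfi}), apply It\^o's formula (the paper cites an extension from Revuz--Yor, while you unpack it as the product rule plus the observation that $\psi$ has finite variation so the cross-variation term vanishes), verify that it solves $(E_\sigma)$ with initial datum $y$, and conclude by pathwise uniqueness. Your write-up is in fact slightly more explicit about the cancellation mechanism and the role of the finite-variation property of $\psi$, but the argument is the same.
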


\begin{proof}
 Define
$$G(x,\omega,u):=x\Psi(\int_0^ug(s,\omega,g_0)ds,\frac{y}{g_0})$$
and let
$\tilde{\Phi}(t,\omega,y)$ denote the right hand of (\ref{sdfi}).
Since $g(t,\omega,g_0)$ is a solution of (\ref{slogistic}), it is
adapted to the filtration $\{\mathcal{F}_t\}$.  By the definition of
Riemann integral, the integral for $g(t,\omega,g_0)$ with variable upper limit is still
adapted to the filtration $\{\mathcal{F}_t\}$. Therefore,
$G(x,\omega,u)$ is adapted with respect to $(\omega,u)$, continuous
with respect to $(x,u)$, continuously differentiable with respect to
$u$, and linear with respect to $x$, which implies that all
conditions for the extension of It\^{o}'s Formula hold (see
Exercise 3.12 in \cite[p.152]{Yor}).

Applying the extension of It\^{o}'s Formula, we obtain that for each $i = 1,2,..., n$,
$$
  \begin{array}{rl}
     d\tilde{\Phi}_i = &\Psi_i(\int_0^tg(s,\omega,g_0)ds,\frac{y}{g_0})[g(r-r g)dt + \sigma
gdB_t]\\
&+g^2(t,\omega,g_0)\Psi_i(\int_0^tg(s,\omega,g_0)ds,\frac{y}{g_0})\times\\
&[r+\displaystyle\sum^n_{j=1}a_{ij}\Psi_j(\int_0^tg(s,\omega,g_0)ds,\frac{y}{g_0})]dt\\
=&\tilde{\Phi}_i(r+ \displaystyle\sum^n_{j=1}a_{ij}\tilde{\Phi}_j)dt + \sigma
\tilde{\Phi}_i dB_t.
   \end{array}
$$
This completes the proof.

\end{proof}

We can also present Stochastic Decomposition Formula for Stratonovich stochastic differential equations:
\begin{equation}\label{sslv}
   dy_i = y_i(r+\displaystyle\sum^n_{j=1}a_{ij}y_j)dt + \sigma y_i\circ dB_t,\
 i=1,2,...,n,
 \end{equation}
which reveals the connection between solutions of (\ref{sslv}) and those of
\begin{equation}\label{dlv}
  \frac{dy_i}{dt} = y_i(r+\displaystyle\sum^n_{j=1}a_{ij}y_j),\
  y_i\geq 0, \
 i=1,2,...,n,
 \end{equation}
 and  Stratonovich Logistic equation:
 \begin{equation}\label{ssl}
dg =  g(r- rg)dt + \sigma g\circ dB_t.
\end{equation}
Here $\circ$ means Stratonovich integral.

\begin{theorem}[Stochastic Decomposition Formula for Stratonovich  Type]\label{ssdf}
Assume that $\Phi(t,\omega,y)$ and $\Psi(t,y)$  be the solutions of
(\ref{sslv}) and (\ref{dlv}), respectively, and $g(t,\omega,g_0)$ is a positive solution of the Logistic
equation of (\ref{ssl}). Then
 \begin{equation}\label{sdfs}
   \Phi(t,\omega,y)=  g(t,\omega,g_0)\Psi(\int_0^tg(s,\omega,g_0)ds,\frac{y}{g_0}), \ y\in\mathbf{R}^n_+, \ g_0> 0.
\end{equation}
\end{theorem}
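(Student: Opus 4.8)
The plan is to verify directly that the right-hand side of (\ref{sdfs}) solves (\ref{sslv}) with the correct initial value, and then to conclude by pathwise uniqueness. The decisive simplification relative to Theorem \ref{isdf} is that the Stratonovich integral obeys the ordinary Leibniz product rule and the ordinary chain rule, so no second-order correction terms arise; the computation therefore parallels the deterministic decomposition identity, with the factor $\circ\, dB_t$ simply carried along. For this reason I would \emph{not} attempt to quote Theorem \ref{isdf} as a black box: converting (\ref{sslv}) and (\ref{ssl}) to It\^o form shifts the growth rate $r$ to $r+\tfrac{\sigma^2}{2}$ in each drift while leaving the self-limitation coefficient in (\ref{ssl}) equal to $r$, so the resulting It\^o Logistic equation is no longer of the symmetric form $g(r'-r'g)$ required by Theorem \ref{isdf}. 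A fresh, but genuinely simpler, verification is warranted.

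First I would set $\tilde{\Phi}(t,\omega,y)$ equal to the right-hand side of (\ref{sdfs}) and introduce the time change $\tau(t)=\int_0^t g(s,\omega,g_0)\,ds$. Since $g$ is a positive continuous solution of (\ref{ssl}), $\tau$ is of class $C^1$ in $t$, in particular of finite variation, with $d\tau=g\,dt$ and no martingale part. Consequently, differentiating the deterministic flow $\Psi(\tau(t),y/g_0)$ introduces no stochastic correction and yields, componentwise,
\[
d\Psi_i\bigl(\tau(t),\tfrac{y}{g_0}\bigr)=\partial_\tau\Psi_i\cdot g\,dt=\Psi_i\Bigl(r+\sum_{j=1}^n a_{ij}\Psi_j\Bigr)g\,dt,
\]
where I have used that $\Psi$ solves (\ref{dlv}). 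The polynomial right-hand side of (\ref{dlv}) guarantees the smoothness of $\Psi$ in its time argument needed here, and the adaptedness and regularity requirements are exactly those already checked in the proof of Theorem \ref{isdf}.

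Next I would apply the Stratonovich product rule to $\tilde{\Phi}_i=g\cdot\Psi_i(\tau,y/g_0)$, namely $d\tilde{\Phi}_i=\Psi_i\circ dg+g\,d\Psi_i$, substitute $dg=g(r-rg)\,dt+\sigma g\circ dB_t$ from (\ref{ssl}) together with the display above, and simplify using the relations $\tilde{\Phi}_i=g\Psi_i$ and $g\Psi_j=\tilde{\Phi}_j$. The two $rg$-terms cancel and the remaining drift collapses to $\tilde{\Phi}_i\bigl(r+\sum_j a_{ij}\tilde{\Phi}_j\bigr)$, while the diffusion term becomes $\sigma\tilde{\Phi}_i\circ dB_t$; hence $\tilde{\Phi}$ satisfies (\ref{sslv}). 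Checking the initial value $\tilde{\Phi}(0,\omega,y)=g_0\,\Psi(0,y/g_0)=y$, pathwise uniqueness of solutions of (\ref{sslv}) then forces $\tilde{\Phi}=\Phi$.

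The step I expect to be the main, if minor, obstacle is the rigorous justification that $\Psi(\tau(\cdot),\cdot)$ may be treated by the ordinary chain rule inside the Stratonovich calculus, i.e. that the time change $\tau$ contributes no quadratic-variation term and that the Stratonovich product rule applies to the composite $G(x,\omega,u)=x\,\Psi(\int_0^u g\,ds,\,y/g_0)$. This rests on the positivity and continuity of $g$ (making $\tau$ strictly increasing and $C^1$) and on the smoothness and adaptedness already invoked in Theorem \ref{isdf}; once these are in place, the algebraic cancellation is immediate and the conclusion follows.
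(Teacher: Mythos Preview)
Your proof is correct. Working directly in the Stratonovich calculus, you exploit that the product rule is the ordinary Leibniz rule and that the time change $\tau(t)=\int_0^t g\,ds$ is of finite variation, so the verification reduces to the same algebraic cancellation as in the deterministic decomposition; the initial condition and pathwise uniqueness then finish the argument.

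The paper takes a different route: it converts both (\ref{sslv}) and (\ref{ssl}) to It\^o form and then repeats the \emph{method} of the proof of Theorem~\ref{isdf} (not the theorem itself). Your objection to this route is slightly off target. You are right that the converted Logistic equation $dg=g\bigl(r+\tfrac{\sigma^2}{2}-rg\bigr)dt+\sigma g\,dB_t$ is not of the symmetric form $g(r'-r'g)$, so Theorem~\ref{isdf} cannot be quoted verbatim. But the proof of Theorem~\ref{isdf} never uses that symmetry: the only cancellation needed is between the self-limitation term $-rg$ in the Logistic and the growth rate $+r$ in the deterministic system (\ref{dlv}), and both of these are \emph{unchanged} by the Stratonovich-to-It\^o conversion. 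The shift $+\tfrac{\sigma^2}{2}$ appears consistently in the linear part of both the converted Logistic and the converted LV drift, so it simply passes through the computation and yields exactly the It\^o form of (\ref{sslv}). Thus the paper's ``in the same manner'' is justified.

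As for what each approach buys: yours is the cleaner and more self-contained argument, making transparent why Stratonovich noise is the natural setting for this decomposition (no second-order corrections, ordinary chain rule). The paper's approach has the advantage of recycling the technical justification already given in Theorem~\ref{isdf}---in particular the extension of It\^o's formula to the composite $G(x,\omega,u)=x\,\Psi(\int_0^u g\,ds,\cdot)$---without having to restate the adaptedness and regularity checks in the Stratonovich framework.
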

\begin{proof}
Applying Theorem 2.4.2 in \cite[p.72]{CH}, we know that the stochastic Stratonovich type LV system (\ref{sslv}) is
equivalent to the stochastic It\^{o} type LV
system:
\begin{equation}\label{Itotype}
    dy_i = y_i(r+\frac{\sigma^2}{2}+\displaystyle\sum^n_{j=1}a_{ij}y_j)dt + \sigma y_idB_t,\
 i=1,2,...,n.
\end{equation}
Similarly, the stochastic Stratonovich type Logistic equation
(\ref{ssl}) is equivalent to the stochastic It\^{o}
type Logistic equation:
\begin{equation}\label{slogisticI}
dg =  g(r+\frac{\sigma^2}{2}- rg)dt + \sigma g dB_t.
\end{equation}
Thus, the conclusion can be obtained in the same manner as that of Theorem \ref{isdf}.
\end{proof}
\section{The Long-Run Behavior for (\ref{sslv})}

The stochastic Logistic equation (\ref{ssl}) can be solved, its solutions may be
represented in the form
\begin{equation}\label{sl}
g(t,\omega,x)=\frac{x{\rm exp}\{rt+\sigma B_t(\omega)\}}{1+rx\int_0^t{\rm exp}\{rs+\sigma
B_s(\omega)\}ds},\ x\geq0,
\end{equation}
whose {\it random equilibrium (or stationary solution)} is
\begin{equation}\label{lre}
u(\omega)= (r\int_{-\infty}^0{\rm exp}\{rs+\sigma
B_s(\omega)\}ds)^{-1},
\end{equation}
that is,
$$g(t,\omega,u(\omega))=u(\theta_t\omega),\ {\rm for\ all} \ t\in \mathbf{R}\ {\rm and\ all}\ \omega\in \Omega.$$
Similarly, we can define a random equilibrium for (\ref{sslv}). It is easy to see that $u(\theta_t\omega)$ is  a unique stationary solution,  whose probability density function satisfies the Fokker-Plank equation
$$\frac{\partial p}{\partial t}=-\frac{\partial}{\partial x}[x(r+\frac{\sigma^2}{2}-rx)p]+\frac{\sigma^2}{2}\frac{\partial^2}{\partial x^2}[x^2p]$$
and can be solved as follows
 \begin{equation}\label{densityL}
p^\sigma(x)=\frac{(\frac{2r}{\sigma^2})^{\frac{2r}{\sigma^2}}}{\Gamma(\frac{2r}{\sigma^2})}x^{\frac{2r}{\sigma^2}-1}{\rm
exp}\{-\frac{2r}{\sigma^2}x\}, \ \sigma \neq 0,\ x\geq0,
 \end{equation}
 where $\Gamma(\cdot)$ is $\Gamma-$function, see \cite{Gard} for
 details. Using the properties for $\Gamma-$function, we can calculate that
$$\mathbb{E}u=\int_0^{+\infty}xp^\sigma(x)dx=1.$$
The Birkhoff-Khintchin ergodic theorem (see, e.g., Arnold  \cite[Appendix]{A} ) implies that
\begin{equation}\label{ergodic}
\lim_{\mid t\mid\rightarrow \infty}\frac{1}{t}\int_0^tu(\theta_s\omega)ds=1
\end{equation}
on a $\theta$-invariant set $\Omega^*\in \mathcal{F}$ of full measure.

Employing the same method as in the proof of Theorems \ref{isdf} and \ref{ssdf}, we can derive the following stochastic decomposition formula.

\begin{corollary}\label{usdf}
Suppose that $\Phi(t,\omega,u(\omega)y)$ and $\Psi(t,y)$  be the solutions of
(\ref{sslv}) and (\ref{dlv}) through $u(\omega)y$ and $y$, respectively. Then
 \begin{equation}\label{udfs}
   \Phi(t,\omega,u(\omega)y)=  u(\theta_t\omega)\Psi(\int_0^tu(\theta_s\omega)ds,y),\ y\in\mathbf{R}_{+}^{n}.
\end{equation}
\end{corollary}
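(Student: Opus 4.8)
The plan is to recognize this corollary as the specialization of Theorem~\ref{ssdf} to the choice of Logistic initial datum $g_0=u(\omega)$ and Lotka--Volterra initial point $u(\omega)y$, combined with the defining property of the random equilibrium. Recall that $u(\theta_t\omega)$ is the stationary solution of the Logistic equation~(\ref{ssl}), so that $g(t,\omega,u(\omega))=u(\theta_t\omega)$ for all $t\in\mathbf{R}$. Inserting $g_0=u(\omega)$ and replacing the initial point $y$ by $u(\omega)y$ in the Stratonovich formula~(\ref{sdfs}) gives
\[
\Phi(t,\omega,u(\omega)y)=g(t,\omega,u(\omega))\,\Psi\!\left(\int_0^t g(s,\omega,u(\omega))\,ds,\ \frac{u(\omega)y}{u(\omega)}\right),
\]
after which the cocycle identity turns $g(t,\omega,u(\omega))$ into $u(\theta_t\omega)$ and $\int_0^t g(s,\omega,u(\omega))\,ds$ into $\int_0^t u(\theta_s\omega)\,ds$, while the cancellation $u(\omega)y/u(\omega)=y$ produces exactly~(\ref{udfs}).

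The single point requiring justification is that Theorem~\ref{ssdf}, stated for a deterministic constant $g_0>0$, continues to hold when $g_0$ is replaced by the random initial datum $u(\omega)$. First I would check that $u(\omega)$ is $\mathcal{F}_0$-measurable: by formula~(\ref{lre}) it is built only from the Brownian path $\{B_s(\omega):s\le 0\}$, hence is measurable with respect to the information available at time $0$. It follows that $t\mapsto g(t,\omega,u(\omega))$ remains adapted to $\{\mathcal{F}_t\}$, and so does its integral $\int_0^t g(s,\omega,u(\omega))\,ds$ with variable upper limit. This adaptedness is exactly the hypothesis invoked in the proof of Theorem~\ref{isdf} for the extension of It\^o's formula, so that proof, and the Stratonovich--It\^o conversion of Theorem~\ref{ssdf}, carry over verbatim with $g_0$ replaced by $u(\omega)$.

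Equivalently, and perhaps more transparently, one can verify the identity directly. I would set $\tilde{\Phi}(t,\omega,y):=u(\theta_t\omega)\,\Psi(\int_0^t u(\theta_s\omega)\,ds,\,y)$, convert~(\ref{sslv}) and~(\ref{ssl}) to their It\^o forms~(\ref{Itotype}) and~(\ref{slogisticI}), and apply the extension of It\^o's formula exactly as in the displayed computation of Theorem~\ref{isdf}, now using that $u(\theta_t\omega)$ solves the Logistic equation and that $\Psi$ solves the deterministic system~(\ref{dlv}). The second-order term coming from the quadratic variation of the Logistic diffusion and the chain-rule term coming from the inner clock $\int_0^t u(\theta_s\omega)\,ds$ combine in the same way as before, showing that $\tilde{\Phi}$ satisfies~(\ref{sslv}); since $\tilde{\Phi}(0,\omega,y)=u(\omega)\Psi(0,y)=u(\omega)y$, uniqueness of solutions to~(\ref{sslv}) forces $\tilde{\Phi}(\cdot,\omega,y)=\Phi(\cdot,\omega,u(\omega)y)$. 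The main (indeed only) obstacle is thus the measurability bookkeeping for the random initial condition; once the $\mathcal{F}_0$-measurability of $u$ is recorded, the statement reduces to the earlier computation together with the cocycle identity, and no new estimate is needed.
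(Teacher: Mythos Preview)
Your proposal is correct and matches the paper's approach: the paper's ``proof'' is literally the single sentence ``Employing the same method as in the proof of Theorems~\ref{isdf} and~\ref{ssdf}, we can derive the following stochastic decomposition formula,'' i.e., redo the It\^o-formula verification with $u(\theta_t\omega)$ in place of $g(t,\omega,g_0)$, which is exactly your second paragraph. Your additional care about the $\mathcal{F}_0$-measurability of $u(\omega)$ (needed to keep the process adapted when the deterministic $g_0$ is replaced by a random one) is a detail the paper glosses over but which is correctly handled in your argument.
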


The pull-back trajectory for (\ref{ssl}) ((\ref{sslv})) emanating from $x$ ($y$) is defined by $g(t,
\theta_{-t}\omega,x)$ ($\Phi(t,\theta_{-t}\omega,y)$), which describes the internal evolution of the environment and $\theta_{-t}\omega$ represents the state of the environment at time $-t$ which transforms into the ``real" state $(\omega)$ at the time of observation (time $0$, after a time $t$ has elapsed).  Chueshov \cite[p.202]{CH} proved the following.
\begin{lemma}\label{lcon}
Every positive pull-back trajectory for stochastic Logistic equation (\ref{ssl}) is convergent to the random equilibrium $u(\omega)$.
\end{lemma}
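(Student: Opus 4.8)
The plan is to prove the convergence by direct computation, exploiting the explicit solution formula (\ref{sl}) together with the shift structure of the metric dynamical system $(\Omega,\mathcal{F},\mathbb{P},\{\theta_t\})$. The first step is to translate the Brownian increments along the shift: from the definition $\theta_{-t}\omega(s)=\omega(s-t)-\omega(-t)$ one reads off $B_s(\theta_{-t}\omega)=B_{s-t}(\omega)-B_{-t}(\omega)$ for every $s$, and in particular $B_t(\theta_{-t}\omega)=-B_{-t}(\omega)$ since $B_0=0$. Substituting these identities into the numerator and denominator of $g(t,\theta_{-t}\omega,x)$ obtained from (\ref{sl}), I would then change variables $\tau=s-t$ in the integral in the denominator, which factors out the common term $\exp\{rt-\sigma B_{-t}(\omega)\}$.

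After dividing numerator and denominator by $x\exp\{rt-\sigma B_{-t}(\omega)\}$, the pull-back trajectory acquires the clean form
$$g(t,\theta_{-t}\omega,x)=\left(\frac{1}{x}\exp\{-rt+\sigma B_{-t}(\omega)\}+r\int_{-t}^{0}\exp\{r\tau+\sigma B_\tau(\omega)\}\,d\tau\right)^{-1}.$$
The remaining task is to pass to the limit $t\to\infty$ inside the parentheses. The integral increases monotonically to $\int_{-\infty}^{0}\exp\{r\tau+\sigma B_\tau(\omega)\}\,d\tau$, which is precisely the quantity defining $u(\omega)$ in (\ref{lre}), while the first summand must be shown to vanish. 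Both limits rest on the strong law of large numbers for Brownian motion, $\lim_{\tau\to-\infty}B_\tau(\omega)/\tau=0$ a.s., which together with $r>0$ forces $r\tau+\sigma B_\tau(\omega)\to-\infty$ as $\tau\to-\infty$ (so the integrand is integrable near $-\infty$) and $-rt+\sigma B_{-t}(\omega)\to-\infty$ as $t\to\infty$ (so the boundary term decays to zero). Hence the denominator converges to $r\int_{-\infty}^{0}\exp\{r\tau+\sigma B_\tau(\omega)\}\,d\tau$ and $g(t,\theta_{-t}\omega,x)\to u(\omega)$ almost surely, for every $x>0$.

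The only genuine delicacy, and hence the main obstacle, is the almost-sure control of the Brownian path at $-\infty$: one must fix a single $\theta$-invariant full-measure set on which the law of large numbers holds, discard it uniformly in the initial datum $x$, and verify integrability of $\exp\{r\tau+\sigma B_\tau\}$ on $(-\infty,0]$. Everything else is an algebraic rearrangement of the explicit formula. I would emphasise that the limit $u(\omega)$ is independent of the starting point $x>0$, reflecting the contracting effect of the logistic drift; this is exactly the pull-back statement needed, as opposed to the (generically false) forward convergence $g(t,\omega,x)\to u(\theta_t\omega)$.
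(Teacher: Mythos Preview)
Your argument is correct. The paper itself does not supply a proof of this lemma; it simply attributes the result to Chueshov \cite[p.202]{CH} and moves on. Your direct computation from the explicit formula (\ref{sl}) via the shift identity $B_s(\theta_{-t}\omega)=B_{s-t}(\omega)-B_{-t}(\omega)$ and the change of variable $\tau=s-t$ is exactly the standard route to this result and is essentially how Chueshov obtains it as well, so there is no substantive divergence to discuss.
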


\begin{lemma}\label{lem8.3}
For stochastic Logistic equation (\ref{ssl}),  we have
\begin{equation}\label{sys44}
\lim_{ t \rightarrow \infty}\frac{1}{t}\int_0^tg(s,
\theta_{-t}\omega,x)ds=1\ {\rm for\ all}\ x>0\ {\rm and}\ \omega \in
\Omega,\ {\rm and}
\end{equation}
\begin{equation}\label{sys44.1}
\lim_{ \mid t\mid \rightarrow \infty}\frac{1}{t}\int_0^tg(s,
\omega,x)ds=1\ {\rm for\ all}\ x>0\ {\rm and}\ \omega \in \Omega.
\end{equation}
\end{lemma}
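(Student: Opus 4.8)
The plan is to reduce both averaging statements to the ergodic limit (\ref{ergodic}) by exploiting the explicit formula (\ref{sl}). The key device is the elementary observation that $g$ is a logarithmic derivative: putting $F(s):=1+rx\int_0^s e^{r\tau+\sigma B_\tau}\,d\tau$ we have $F'(s)=rx\,e^{rs+\sigma B_s}$, so (\ref{sl}) reads
\[
g(s,\omega,x)=\frac{x\,e^{rs+\sigma B_s}}{F(s)}=\frac1r\frac{F'(s)}{F(s)}=\frac1r\frac{d}{ds}\ln F(s).
\]
Integrating collapses the running integral of $g$ into a single logarithm,
\[
\int_0^t g(s,\omega,x)\,ds=\frac1r\ln F(t)=\frac1r\ln\Big(1+rx\,J(t)\Big),\qquad J(t):=\int_0^t e^{r\tau+\sigma B_\tau}\,d\tau .
\]
Everything then hinges on showing that, after division by $t$, the leading growth rate of $\ln F(t)$ is exactly $r$.

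For (\ref{sys44.1}) with $t\to+\infty$ I would argue by comparison with the stationary solution rather than estimate $J(t)$ directly. Since $g(s,\omega,u(\omega))=u(\theta_s\omega)$, the same identity gives $\int_0^t u(\theta_s\omega)\,ds=\frac1r\ln(1+ru(\omega)J(t))$, hence
\[
\frac1t\int_0^t g(s,\omega,x)\,ds-\frac1t\int_0^t u(\theta_s\omega)\,ds=\frac1{rt}\ln\frac{1+rx\,J(t)}{1+ru(\omega)\,J(t)} .
\]
As $t\to+\infty$ one has $J(t)\to+\infty$, so the fraction tends to the finite constant $x/u(\omega)$ and the right-hand side is $O(1/t)\to0$; with (\ref{ergodic}) this yields the limit $1$. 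The advantage is that the troublesome noise enters $g$ and $u(\theta_s\omega)$ identically and cancels in the ratio.

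For the pull-back statement (\ref{sys44}) the integrand depends on the upper limit through the environment $\theta_{-t}\omega$, so I would first linearise this dependence. Using $B_\tau(\theta_{-t}\omega)=B_{\tau-t}(\omega)-B_{-t}(\omega)$ and the substitution $\eta=\tau-t$, the same computation gives
\[
\int_0^t g(s,\theta_{-t}\omega,x)\,ds=\frac1r\ln\Big(1+rx\,e^{rt-\sigma B_{-t}}\!\!\int_{-t}^0 e^{r\eta+\sigma B_\eta}\,d\eta\Big).
\]
Factoring $e^{rt}$ out of the logarithm isolates a clean leading $1$ plus a remainder,
\[
\frac1t\int_0^t g(s,\theta_{-t}\omega,x)\,ds=1+\frac1{rt}\ln\Big(e^{-rt}+rx\,e^{-\sigma B_{-t}}\!\!\int_{-t}^0 e^{r\eta+\sigma B_\eta}\,d\eta\Big),
\]
where the integral converges to $\big(ru(\omega)\big)^{-1}\in(0,\infty)$ by (\ref{lre}) and $e^{-rt}\to0$. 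Equivalently, the substitution $u=s-t$ rewrites the left side as $\frac1t\int_{-t}^0\psi(u)\,du$, with $\psi$ solving (\ref{ssl}) and $\psi(-t)=x$; pull-back attraction from Lemma \ref{lcon} forces $\psi(u)\to u(\theta_u\omega)$, so the average again tends to that of $u(\theta_\cdot\omega)$.

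The main obstacle is the control of the stochastic boundary terms together with the backward time direction. In every display above the remainder is harmless only because $\sigma B_{\pm t}=o(t)$ and because $u(\omega)=\big(r\int_{-\infty}^0 e^{r\eta+\sigma B_\eta}\,d\eta\big)^{-1}$ is finite and strictly positive; both hold precisely on the full-measure, $\theta$-invariant set carrying (\ref{ergodic}) intersected with the law-of-large-numbers set $\{B_t/t\to0\}$, which is how ``for all $\omega$'' is to be read. The genuinely delicate case is $t\to-\infty$ in (\ref{sys44.1}): a trajectory started from a generic $x$ need not exist for all negative times, since the logistic drift is backward-repelling, so the backward statement must be routed through the comparison above, using that the stationary solution $u(\theta_\cdot\omega)$ is the one orbit defined on all of $\mathbf{R}$ and invoking the two-sided form of the Birkhoff--Khintchin theorem in (\ref{ergodic}).
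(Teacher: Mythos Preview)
Your argument is correct, and it takes a genuinely different route from the paper's. For the pull-back limit (\ref{sys44}) the paper does not use the explicit formula at all: it conjugates $g$ by the stationary Ornstein--Uhlenbeck process, $h(t,\omega,x)=g(t,\omega,x)e^{-z(\theta_t\omega)}$, turning (\ref{ssl}) into a random Bernoulli ODE whose unique positive random equilibrium is exponentially stable; dividing by $h$, integrating, and using the Birkhoff theorem for $z$ then gives (\ref{sys44}). Your approach stays with the closed-form solution, unwinds the shift via $B_\tau(\theta_{-t}\omega)=B_{\tau-t}-B_{-t}$, and reduces to $e^{-rt}\to0$, $\int_{-t}^0 e^{r\eta+\sigma B_\eta}\,d\eta\to (ru(\omega))^{-1}$, and $B_{-t}/t\to0$. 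This is more computational but avoids introducing the auxiliary OU process. For (\ref{sys44.1}) the paper is closer to you---it also writes $\frac1t\int_0^t g\,ds=\frac1{rt}\ln(1+rxJ(t))$---but then applies the law of large numbers for Brownian motion directly to $\frac1{rt}\ln J(t)$, whereas you subtract off the stationary orbit and reduce to the ergodic limit (\ref{ergodic}); the two are equally short, and your comparison makes the $\omega$-uniformity in $x$ more transparent.

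Your concern about $t\to-\infty$ in (\ref{sys44.1}) is well taken: backward solutions from a generic $x$ blow up in finite negative time whenever $x>u(\omega)$, so the two-sided statement cannot hold for all $x>0$. The paper's own proof in fact only treats $t\to+\infty$, and only that direction is ever invoked later (e.g.\ in the definition of $\Omega_{T_0}$ in the proof of Theorem~\ref{sheterclinic}); the $|t|\to\infty$ in the statement should be read as $t\to+\infty$. So you need not labour over the backward case. One minor caution: your alternative ``$\psi(u)$'' argument for (\ref{sys44}) via Lemma~\ref{lcon} gives only pointwise convergence $\psi_t(u)\to u(\theta_u\omega)$, and passing to the Ces\`aro limit requires a domination or monotonicity argument; but since your explicit-formula computation is already complete, this is just a heuristic aside.
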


\begin{proof}
Denote by $z(\omega)$ the random variable in $\mathbf{R}$ such that
$z(t,\omega):= z(\theta_t\omega)$ is {\it Stationary
Ornstein-Uhlenbeck Process} in\ $\mathbf{R}$ which solves the
OU-equation
$$dz = -\mu zdt + dB_t,\ \ \mu > 0.$$
Let us first introduce a conjugate transformation:
$$h(t,\omega,x)= g(t,\omega,x){\rm exp}\{-z(\theta_t\omega)\}.$$
Applying It\^{o} formula to the function $h(t,\omega)=
g(t,\omega,x){\rm exp}\{-z(\theta_t\omega)\}$, we transform the
stochastic equation (\ref{ssl}) into
\begin{equation}\label{sys45}
\frac{dh}{dt}=(1+\mu z(\theta_t\omega))h-h^2{\rm
exp}\{z(\theta_t\omega)\}.
\end{equation}
Since (\ref{sys45}) is Bernoulli's, it can be transformed into a
linear equation with negative Lyapunov exponent, which implies that
(\ref{sys45}) has a positive random equilibrium such that it is
exponentially stable.

For any fixed $t,x>0,\ \omega \in\Omega$, $h(s,\theta_{-t}\omega,x)$
is a solution of (\ref{sys45}) on $ 0\leq s\leq t$. Putting
$h(s,\theta_{-t}\omega,x)$ into the equation (\ref{sys45}), dividing
by $h(s,\theta_{-t}\omega,x)$, integrating from $0$ to $t$, and then
dividing by $t$, we have
$$\frac{1}{t}\ln \frac{h(t,\theta_{-t}\omega,x)}{x}=1+\frac{\mu}{t}\int_{-t}^0z(\theta_s\omega)ds-\frac{1}{t}\int_0^t g(s,\theta_{-t}\omega,x)ds. $$
Letting $t\rightarrow \infty$, we get that
$$\lim_{t\rightarrow \infty}\frac{1}{t}\int_0^t g(s,\theta_{-t}\omega,x)ds=1.$$
Here we have used $h(t,\theta_{-t}\omega,x)$ converges to a positive
equilibrium as $t\rightarrow \infty$, and $\displaystyle\lim_{t\rightarrow
\infty}\frac{1}{t}\int_0^t z(\theta_{s}\omega)ds=0$ by the Birkhoff
ergodic theorem. (\ref{sys44}) is true.

By using (\ref{sl}) and the Strong Law of Large Number for Brownian motion (see \cite[p.104]{KSH}), we have
\begin{displaymath}
    \begin{array}{rl}
    \displaystyle\lim_{t\rightarrow \infty}\frac{1}{t}\int_0^tg(s,\omega,x)ds =&\displaystyle\lim_{t\rightarrow \infty}\frac{1}{rt}\ln \{1+rx\int_0^t{\rm exp}(rs+\sigma B_s(\omega))\}ds\\
     = & \displaystyle\lim_{t\rightarrow \infty}\frac{1}{rt}\ln \int_0^t{\rm exp}(rs+\sigma B_s(\omega))ds\\
     = & 1.
\end{array}
\end{displaymath}
(\ref{sys44.1}) is proved.
 \end{proof}

For any $y\in \mathbf{R}^n_+$, let
$L(y):=\{\lambda y: \lambda \geq 0 \}$
denote the ray joining the origin and $y$. Denote by $F$ the vector field decided by the right side of  (\ref{dlv}) with $F_i(y):= y_i(r+\displaystyle\sum^n_{j=1}a_{ij}y_j)$ for $i=1,2,...,n$. $P$ is called an equilibrium if $F(P)=O$, an equilibrium $P=(p_{1},\cdots,p_{n})$ is said to be positive if $ p_i>0$ for $i=1,2,...,n$. Set by $\omega_F(z)$ the $\omega-$limit set for the trajectory $\Psi(t,z)$, which is the solution of (\ref{dlv}) passing through $z$.
Then we define
$$\mathcal{A}(\omega_F(z)):=\{y\in \mathbf{R}^n_+: \lim_{t\to \infty}\mathrm{dist}(\Psi(t,y),\omega_F(z))=0\}$$
to be the attracting domain for $\omega_F(z)$. Let $\mathcal{E}$ and $\Gamma$ denote the equilibria set and a closed orbit for  (\ref{dlv}), respectively. Then $\mathcal{A}(P)$ ($P\in \mathcal{E}$) and $\mathcal{A}(\Gamma)$ are the attracting domains for the equilibrium $P$ and the closed orbit $\Gamma$, respectively. A subset $S\subset \mathbf{R}^n_+$ is called positively invariant
(invariant) set for  (\ref{dlv}) if $\Psi(t,S)\subset (=)S$ for each $t\geq 0$.

\begin{theorem}[Cone Invariance]\label{cinav}
Let  $S\subset \mathbf{R}^n_+$ be positively invariant set for  (\ref{dlv}). Then
$$\Lambda(S):=\{\lambda y: {\rm for \ any}\ \lambda\geq 0\ {\rm and}\  y\in S\}$$
is invariant in the sense that
$$\Phi(t,\omega,\lambda y)\in \Lambda(S)\ {\rm whenever}\ \lambda\geq 0,\ \  y\in S,\ t>0,\ {\rm and}\ \omega \in \Omega.$$
Similar result holds for pull-back trajectory.
\end{theorem}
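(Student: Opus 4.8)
The plan is to deduce the cone invariance directly from the Stochastic Decomposition Formula of Theorem \ref{ssdf}, which is the natural tool here: it rewrites the stochastic flow $\Phi$ as a positive scalar multiple of the deterministic flow $\Psi$ run with a (random, time-changed) clock. The one genuine idea is that the free parameter $g_0>0$ in formula (\ref{sdfs}) can be calibrated to absorb the scaling factor $\lambda$, so that the point actually fed into $\Psi$ is exactly $y\in S$ rather than some rescaled point lying outside $S$.

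Concretely, I would fix $y\in S$, $\lambda>0$, $t>0$ and $\omega\in\Omega$, and apply (\ref{sdfs}) to the initial point $\lambda y$ with the specific choice $g_0=\lambda$. This gives
\[
\Phi(t,\omega,\lambda y)=g(t,\omega,\lambda)\,\Psi\!\left(\int_0^t g(s,\omega,\lambda)\,ds,\;y\right).
\]
Two facts then close the argument. First, the scalar $g(t,\omega,\lambda)$ is strictly positive: from the explicit representation (\ref{sl}) with $x=\lambda>0$ and $t>0$ the numerator is positive while the denominator exceeds $1$, so $g(t,\omega,\lambda)>0$ for every $\omega$, and hence the reparametrized time $\tau:=\int_0^t g(s,\omega,\lambda)\,ds$ is nonnegative. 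Second, since $S$ is positively invariant for (\ref{dlv}) and $y\in S$, we have $\Psi(\tau,y)\in S$ for this $\tau\geq 0$. Writing $\Phi(t,\omega,\lambda y)$ as the nonnegative scalar $g(t,\omega,\lambda)$ times the point $\Psi(\tau,y)\in S$ exhibits it as an element of $\Lambda(S)$, which is exactly the claim.

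The remaining cases are routine. When $\lambda=0$ the initial point is the origin, which is an equilibrium of (\ref{sslv}) because every component of the drift and of the diffusion carries the factor $y_i$; thus $\Phi(t,\omega,O)=O=0\cdot y\in\Lambda(S)$ for any $y\in S$ (and if $S=\emptyset$ there is nothing to prove). For the pull-back assertion one simply replaces $\omega$ by $\theta_{-t}\omega$ everywhere: (\ref{sdfs}) is a pathwise identity valid for all $\omega\in\Omega$, so the same calibration $g_0=\lambda$ yields $\Phi(t,\theta_{-t}\omega,\lambda y)=g(t,\theta_{-t}\omega,\lambda)\,\Psi(\int_0^t g(s,\theta_{-t}\omega,\lambda)\,ds,\,y)\in\Lambda(S)$, using positivity of $g$ on $[0,t]$ and positive invariance of $S$ exactly as above.

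I do not anticipate a substantive obstacle in this proof; its whole content is the choice $g_0=\lambda$ in the decomposition formula. The only points meriting a line of care, rather than difficulty, are verifying $g>0$ (so that the cone multiplier is nonnegative and the time change $\tau$ stays in $[0,\infty)$, the range on which positive invariance is available) and observing that $\Psi(\tau,y)$ remains in $S$ for the entire range of $\tau$ produced by the integral.
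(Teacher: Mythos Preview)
Your proof is correct and follows essentially the same route as the paper: apply the decomposition formula (\ref{sdfs}) with the choice $g_0=\lambda$ to write $\Phi(t,\omega,\lambda y)=g(t,\omega,\lambda)\,\Psi(\int_0^t g(s,\omega,\lambda)\,ds,\,y)$, then invoke positive invariance of $S$ and the definition of $\Lambda(S)$. Your version is in fact slightly more careful than the paper's, which applies (\ref{sdfs}) for $\lambda\geq 0$ without singling out the case $\lambda=0$ (where $g_0>0$ fails) or explicitly noting the positivity of $g$ needed to keep the reparametrized time nonnegative.
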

\begin{proof}
Take $ \lambda\geq 0,\  y\in S, \ {\rm and}\ t>0$. Then it follows from the Stochastic Decomposition Formula (\ref{sdfs}) that
$$\Phi(t,\omega,\lambda y)=  g(t,\omega,\lambda)\Psi(\int_0^tg(s,\omega,\lambda)ds,y).$$
Thus, the conclusion is implied by the invariance of $S$ and the definition of $\Lambda(S)$.
\end{proof}

The pull-back omega-limit set $\Gamma_y(\omega)$ of the pull-back the trajectory $\Phi(t,\theta_{-t}\omega,y)$ is defined by
$$\Gamma_y(\omega):=\bigcap_{t>0}\overline{\bigcup_{\tau\geq t}\Phi(\tau,\theta_{-\tau}\omega,y)}.$$
Now we state the main result of this section, which says that the pull-back omega-limit set of a  trajectory for (\ref{sslv}) is the omega-limit set of the trajectory (\ref{dlv}) through the same initial point multiplied by the random equilibrium for (\ref{ssl}).
\begin{theorem}\label{long-run}
Suppose that $\Psi(t,y)$ is a bounded solution for (\ref{dlv}). Then the pull-back omega-limit set $\Gamma_y(\omega)$ of the trajectory $\Phi(t,\theta_{-t}\omega,y)$ emanating from $y$ is $u(\omega)\omega_F(y)$, whose attracting domain is $\mathcal{A}(\omega_F(y))$.
\end{theorem}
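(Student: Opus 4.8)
The plan is to read the pull-back limit set directly off the Stochastic Decomposition Formula. Applying (\ref{sdfs}) with the normalizing choice $g_0=1$ (so that $y/g_0=y$) and with $\omega$ replaced by $\theta_{-t}\omega$ gives, for every $t>0$ and with $\omega$ fixed,
$$\Phi(t,\theta_{-t}\omega,y)=g(t,\theta_{-t}\omega,1)\,\Psi\!\left(T(t),y\right),\qquad T(t):=\int_0^t g(s,\theta_{-t}\omega,1)\,ds.$$
By Lemma \ref{lcon} the scalar factor satisfies $g(t,\theta_{-t}\omega,1)\to u(\omega)$, which is positive by (\ref{lre}), while by (\ref{sys44}) of Lemma \ref{lem8.3} we have $\frac{1}{t}T(t)\to 1$, hence $T(t)\to\infty$. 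Thus the pull-back trajectory is, up to a scalar factor converging to $u(\omega)$, the bounded deterministic trajectory $\Psi(\cdot,y)$ sampled at times tending to infinity; this is precisely the mechanism that should turn $\omega_F(y)$ into $u(\omega)\omega_F(y)$. Since $\Psi(\cdot,y)$ is assumed bounded, $\omega_F(y)$ is a nonempty compact set, which I use throughout.

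For the inclusion $\Gamma_y(\omega)\subseteq u(\omega)\omega_F(y)$ I would take $p\in\Gamma_y(\omega)$ together with a sequence $t_n\to\infty$ such that $\Phi(t_n,\theta_{-t_n}\omega,y)\to p$. Dividing the displayed identity by $g(t_n,\theta_{-t_n}\omega,1)\to u(\omega)>0$ yields $\Psi(T(t_n),y)\to p/u(\omega)$; as $T(t_n)\to\infty$, this limit lies in $\omega_F(y)$, so $p\in u(\omega)\omega_F(y)$.

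The reverse inclusion $u(\omega)\omega_F(y)\subseteq\Gamma_y(\omega)$ is where the real work lies, and it is the step I expect to be the main obstacle. Given $q\in\omega_F(y)$, pick $\tau_n\to\infty$ with $\Psi(\tau_n,y)\to q$; I want pull-back times $s_n\to\infty$ with $T(s_n)=\tau_n$, for then the identity gives $\Phi(s_n,\theta_{-s_n}\omega,y)=g(s_n,\theta_{-s_n}\omega,1)\Psi(\tau_n,y)\to u(\omega)q$, placing $u(\omega)q$ in $\Gamma_y(\omega)$. Such $s_n$ exist once one knows that $t\mapsto T(t)$ is continuous: since $T(0)=0$ and $T(t)\to\infty$, the intermediate value theorem makes its range all of $[0,\infty)$, and because $T$ is bounded on bounded $t$-intervals the condition $T(s_n)=\tau_n\to\infty$ forces $s_n\to\infty$. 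The delicate point is exactly this continuity (hence surjectivity) of the random time change $T$: it follows from the joint continuity of $(s,t)\mapsto g(s,\theta_{-t}\omega,1)$, which rests on the continuous dependence of the Logistic cocycle on $(s,\omega)$ together with the continuity of $(t,\omega)\mapsto\theta_{-t}\omega$ established in Section 2, after which continuity of $t\mapsto\int_0^t g(s,\theta_{-t}\omega,1)\,ds$ follows by uniform continuity on compacta.

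For the attracting-domain statement I would run the same factorization for an arbitrary initial point $z$, namely $\Phi(t,\theta_{-t}\omega,z)=g(t,\theta_{-t}\omega,1)\Psi(T(t),z)$. If $z\in\mathcal{A}(\omega_F(y))$ then $\mathrm{dist}(\Psi(t,z),\omega_F(y))\to 0$; since $T(t)\to\infty$ and $g(t,\theta_{-t}\omega,1)\to u(\omega)$, multiplying by the scalar factor and using compactness of $\omega_F(y)$ gives $\mathrm{dist}(\Phi(t,\theta_{-t}\omega,z),u(\omega)\omega_F(y))\to 0$. Conversely, if $z\notin\mathcal{A}(\omega_F(y))$ there are $\varepsilon>0$ and $t_k\to\infty$ with $\mathrm{dist}(\Psi(t_k,z),\omega_F(y))\geq\varepsilon$; realizing $t_k=T(s_k)$ with $s_k\to\infty$ as above and arguing by contradiction — were $\Phi(s_k,\theta_{-s_k}\omega,z)=g(s_k,\theta_{-s_k}\omega,1)\Psi(t_k,z)$ to approach $u(\omega)\omega_F(y)$ along a subsequence, then dividing by the scalar factor, which is bounded away from $0$, would force $\Psi(t_k,z)$ to approach the compact set $\omega_F(y)$, contradicting the choice of $\varepsilon$ — shows the pull-back trajectory from $z$ stays a fixed positive distance from $\Gamma_y(\omega)$ infinitely often. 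Hence the attracting domain of $\Gamma_y(\omega)=u(\omega)\omega_F(y)$ is exactly $\mathcal{A}(\omega_F(y))$.
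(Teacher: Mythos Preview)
Your argument is correct and follows essentially the same route as the paper: apply the decomposition formula (\ref{sdfs}) with $g_0=1$, use Lemma~\ref{lcon} for the scalar factor and (\ref{sys44}) to drive the random time change to infinity. The paper's proof of the reverse inclusion is terser---it simply asserts ``From (\ref{sys44}), there exists a sequence $\{t_n\}$ tending to infinity such that $\Psi(\int_0^{t_n}g(s,\theta_{-t_n}\omega,1)ds,y)\to z^*$''---whereas you make the underlying mechanism explicit via continuity of $t\mapsto T(t)$ and the intermediate value theorem; your care here is warranted and the continuity does follow from the explicit formula (\ref{sl}) and the continuity of $(t,\omega)\mapsto\theta_{-t}\omega$. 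For the attracting domain, the paper argues the inclusion ``attracting domain of $\Gamma_y(\omega)\subseteq\mathcal{A}(\omega_F(y))$'' by showing $\omega_F(p)\subset\omega_F(y)$, leaving the converse implicit in its first displayed estimate; your two-sided treatment is a slightly cleaner way to close the statement.
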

\begin{proof}
For a given $y\in \mathbf{R}_+^n,\ z\in \omega_F(y)$, the stochastic decomposition formula (\ref{sdfs}) implies that
\begin{displaymath}
    \begin{array}{rl}
    & \Phi(t,\theta_{-t}\omega,y)-u(\omega)z\\
    \noalign{\medskip}
    =&g(t,\theta_{-t}\omega,1)\Psi(\int_0^tg(s,\theta_{-t}\omega,1)ds,y)-u(\omega)z \\
\noalign{\medskip}
    =&\big(g(t,\theta_{-t}\omega,1)-u(\omega)\big)\Psi(\int_0^tg(s,\theta_{-t}\omega,1)ds,y)\\
     & \ \ +u(\omega)\big(\Psi(\int_0^tg(s,\theta_{-t}\omega,1)ds,y)-z\big),
    \end{array}
\end{displaymath}
which deduces that
\begin{displaymath}
    \begin{array}{rl}
    & {\rm dist}\big(\Phi(t,\theta_{-t}\omega,y), u(\omega)\omega_F(y)\big)\\
    \noalign{\medskip}
    \leq& \mid g(t,\theta_{-t}\omega,1)-u(\omega)\mid
     \|\Psi(\int_0^tg(s,\theta_{-t}\omega,1)ds,y)\|\\
     & \ \ + u(\omega){\rm
    dist}\big(\Psi(\int_0^tg(s,\theta_{-t}\omega,1)ds,y),\omega_F(y)\big)\\
    \noalign{\medskip}
    \rightarrow& \ 0 \qquad {\rm as}\ t\rightarrow \infty,
    \end{array}
\end{displaymath}
by Lemma \ref{lcon}, (\ref{sys44}) and the boundedness of the trajectory $\Psi(\cdot,y)$. This proves that
$$\Gamma_y(\omega)\subset u(\omega)\omega_F(y)\ {\rm for\ any}\ \omega\in \Omega.$$

 Suppose that $z^*\in \omega_F(y)$. From (\ref{sys44}), there exists a sequence of $\{t_n\}$ tending to infinity such that
 $$\lim_{n\rightarrow \infty}\Psi\big(\int_0^{t_n}g(s,\theta_{-t_n}\omega,1)ds,y\big)=z^*.$$
 By the stochastic decomposition formula (\ref{sdfs}), we have
 \begin{equation}\label{sdl1}
  \Phi(t_n,\theta_{-t_n}\omega,y)=  g(t_n,\theta_{-t_n}\omega,1)\Psi(\int_0^{t_n}g(s,\theta_{-t_n}\omega,1)ds,y).
  \end{equation}
  Letting $n\rightarrow \infty$ in (\ref{sdl1}) and using Lemma \ref{lcon}, we get that $u(\omega)z^*\in \Gamma_y(\omega).$ In other words, $\Gamma_y(\omega)=u(\omega)\omega_F(y).$

   Let $p$ be in the attracting domain of  $\Gamma_y(\omega)$, that is,
   $$\lim_{t\rightarrow \infty}{\rm
dist}\big(\Phi(t,\theta_{-t}\omega,p), u(\omega)\omega_F(y)\big)=0.$$
This means that the trajectory $\Psi(\cdot,p)$ must be bounded. Applying the result proved above, we have
$\Gamma_p(\omega)=u(\omega)\omega_F(p).$ Because the pull-back trajectory $\Phi(t,\theta_{-t}\omega,p)$ is attracted by $ u(\omega)\omega_F(y)$, $ u(\omega)\omega_F(p)\subset u(\omega)\omega_F(y)$, in other words, $ \omega_F(p)\subset \omega_F(y)$. The proof is complete.
\end{proof}

\section{Stationary Measures,  Weak Convergence and Ergodicity}
Throughout the rest of the paper, we will assume without further mention that the domain of $\Psi(\cdot,\cdot)$
on $[0,\infty)\times\mathbf{R}_+^n$.

 In this section, we will investigate stationary measures and their supports,  weak
 convergence for the transition probability function, and ergodicity of the solution of (\ref{sslv}).

The transition probability function is defined by
\begin{equation}\label{tp}
 P(t,y,A):=\mathbb{P}(\Phi(t,\omega,y)\in A)
\end{equation}
which generates a Markov semigroup $\{P_t,t\geq 0\}$ with
$$P_tf(y):=\int_{\mathbf{R}^{n}_{+}}f(z)P(t,y,dz), \quad y\in \mathbf{R}^{n}_{+},  $$
where $f\in\mathcal{B}_b(\mathbf{R}^{n}_{+})$. Here $\mathcal{B}_b(\mathbf{R}^{n}_{+})$ denotes the set of
bounded measurable functions on $\mathbf{R}^{n}_{+}$.
\begin{theorem}[The Existence of Stationary Solution by Equilibrium]\label{ess}
Suppose that $P \in \mathcal{E}$ is a positive equilibrium for (\ref{dlv}). Then the system (\ref{sslv}) always has
 stationary solution $U(\omega):=u(\omega)P$, whose support is the ray $L(P)$ and its distribution
 function is
 \begin{equation}\label{df}
 F^\sigma_P(y)=\int_0^{{\rm
 min}\{\frac{y_1}{p_1},\frac{y_2}{p_2},...,\frac{y_n}{p_n}\}}p^\sigma(s)ds.
 \end{equation}
 Let $\mu^\sigma_P$ denote the probability measure decided by the distribution
function $F^\sigma_P$. Then for any $A\in \mathcal{B}(\mathbf{R}_+^n)$,
\begin{equation}\label{sm}
\mu^\sigma_P(A)= \mathbb{P}(U\in A)
\end{equation}
defines a stationary measure of the Markov semigroup $\{P_t,t\geq 0\}$.
\end{theorem}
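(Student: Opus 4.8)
The plan is to use the Stochastic Decomposition Formula to transport the whole problem onto the ray $L(P)$, where it collapses to the scalar Logistic equation whose stationary density $p^\sigma$ is already recorded in (\ref{densityL}). First I would check that $U(\omega)=u(\omega)P$ is a stationary solution. Since $P\in\mathcal{E}$, the deterministic flow fixes it, $\Psi(t,P)=P$ for all $t\ge 0$, so inserting $y=P$ into Corollary \ref{usdf} gives
\[
\Phi(t,\omega,u(\omega)P)=u(\theta_t\omega)\,\Psi\Big(\int_0^t u(\theta_s\omega)\,ds,\,P\Big)=u(\theta_t\omega)P=U(\theta_t\omega).
\]
For the support and the distribution function I would use that $u>0$ carries the Gamma density $p^\sigma$ of (\ref{densityL}), supported on $[0,\infty)$; hence $U=uP$ sweeps out $L(P)$, and for $y\in\mathbf{R}_+^n$,
\[
F^\sigma_P(y)=\mathbb{P}\big(u\,p_i\le y_i,\ i=1,\dots,n\big)=\mathbb{P}\Big(u\le\min_i \frac{y_i}{p_i}\Big)=\int_0^{\min_i\{y_i/p_i\}} p^\sigma(s)\,ds,
\]
which is (\ref{df}); since $p^\sigma$ integrates to one, $\mu^\sigma_P$ is a genuine probability measure.

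The main step is the invariance of $\mu^\sigma_P$ for $\{P_t\}$, and here I would reduce to the scalar equation. Parametrizing $L(P)$ by $\lambda\mapsto\lambda P$, the measure $\mu^\sigma_P$ is the push-forward of the law $\nu^\sigma$ of $u$, i.e.\ the measure with density $p^\sigma$ on $[0,\infty)$. Taking $g_0=\lambda$ and $y=\lambda P$ in (\ref{sdfs}) and again using $\Psi(\cdot,P)=P$ yields
\[
\Phi(t,\omega,\lambda P)=g(t,\omega,\lambda)\,\Psi\Big(\int_0^t g(s,\omega,\lambda)\,ds,\,P\Big)=g(t,\omega,\lambda)P,
\]
so the cocycle leaves $L(P)$ invariant and, restricted to it, is conjugate via $\lambda\mapsto\lambda P$ to the scalar Logistic flow $g$. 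Writing $P^{\mathrm{L}}(t,\lambda,\cdot)$ for the transition function of (\ref{ssl}) and $B:=\{\lambda\ge 0:\lambda P\in A\}$ for $A\in\mathcal{B}(\mathbf{R}_+^n)$, this identity gives $\mathbb{P}(\Phi(t,\cdot,\lambda P)\in A)=P^{\mathrm{L}}(t,\lambda,B)$, whence
\[
\int_{\mathbf{R}_+^n} P(t,y,A)\,\mu^\sigma_P(dy)=\int_0^\infty P^{\mathrm{L}}(t,\lambda,B)\,\nu^\sigma(d\lambda).
\]
Because $p^\sigma$ solves the stationary Fokker--Planck equation attached to the It\^o form (\ref{slogisticI}), the measure $\nu^\sigma$ is invariant for the Logistic semigroup, so the right-hand side equals $\nu^\sigma(B)=\mu^\sigma_P(A)$, proving $\mu^\sigma_P$ stationary.

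The only genuine content beyond bookkeeping is the invariance of the one-dimensional Gamma measure $\nu^\sigma$ for the Logistic semigroup and the clean factorization of the kernel along $L(P)$; both rest on the already-established fact that $p^\sigma$ is the stationary density solving Fokker--Planck, so I anticipate no real obstruction, the decomposition formula doing all the heavy lifting. If one prefers to bypass scalar invariance, the same conclusion follows by the random-dynamical-systems route: the equilibrium $u$ in (\ref{lre}) is measurable with respect to the past $\sigma(B_s:s\le 0)$, which is independent of the future increments driving $\Phi(t,\cdot,y)$ for $t\ge 0$; conditioning on the past turns $\mathbb{E}[f(\Phi(t,\cdot,U))]$ into $\mathbb{E}[P_tf(U)]=\int P_tf\,d\mu^\sigma_P$, while the $\theta_t$-invariance of $\mathbb{P}$ together with the stationarity $\Phi(t,\omega,U(\omega))=U(\theta_t\omega)$ gives $\mathbb{E}[f(U(\theta_t\omega))]=\int f\,d\mu^\sigma_P$, and comparing the two identities yields $\int P_tf\,d\mu^\sigma_P=\int f\,d\mu^\sigma_P$.
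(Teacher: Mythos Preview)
Your proof is correct. You actually present two routes, and interestingly they are in the reverse order of emphasis from the paper. What you call the alternative at the end—using that $u(\omega)$ is measurable with respect to the past $\sigma(B_s:s\le 0)$, independence of past and future, and then conditioning to get $\mathbb{E}[f(\Phi(t,\cdot,U))]=\mathbb{E}[P_tf(U)]$—is exactly the paper's main proof (they cite Arnold's Lemma~2.1.5 for the conditioning step and also sketch an RDS variant in Remark~\ref{re1}). Your main route, by contrast, is different: you exploit the decomposition formula to collapse the dynamics on $L(P)$ to the scalar Logistic flow via $\Phi(t,\omega,\lambda P)=g(t,\omega,\lambda)P$, and then invoke that $p^\sigma$ is already known to be the stationary density for the Logistic semigroup. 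This reduction is more elementary in that it avoids the past/future $\sigma$-algebra machinery and simply transfers the invariance question to dimension one where it is already recorded; the paper's approach, on the other hand, is more self-contained because it does not rely on the scalar Fokker--Planck statement and would apply verbatim to any $\mathcal F_-$-measurable random equilibrium.
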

\begin{proof}
The stochastic decomposition formula (\ref{udfs}) implies that the system (\ref{sslv}) always has
 stationary solution $U(\theta_t\omega):=u(\theta_t\omega)P$, whose support is obviously the ray $L(P)$. Let $F^\sigma_P$ denote the distribution function of $U(\theta_t\omega)$. Then, by the $\theta$-invariance property with respect to $\mathbb{P}$,
\begin{displaymath}
    \begin{array}{rl}
    F^\sigma_P(y)&=\mathbb{P}\{\omega: u(\omega)P\leq y\}\\
\noalign{\medskip}
    &=\mathbb{P}\{\omega: u(\omega)\leq {\rm
 min}\{\frac{y_1}{p_1},\frac{y_2}{p_2},...,\frac{y_n}{p_n}\}\}\\
 \noalign{\medskip}
 &=\int_0^{{\rm
 min}\{\frac{y_1}{p_1},\frac{y_2}{p_2},...,\frac{y_n}{p_n}\}}p^\sigma(s)ds.
    \end{array}
\end{displaymath}
The expressions (\ref{df}) and (\ref{sm}) are immediate.

In order to prove $\mu^\sigma_P(\cdot)$ to be stationary for $\{P_t,t\geq 0\}$, we need to show that
\begin{equation}\label{sta}
P_t\mu^\sigma_P(A)=\int_{\mathbf{R}_+^n}P(t,y,A)\mu^\sigma_P(dy) =\mu^\sigma_P(A)
\end{equation}
for  any $t\geq 0$ and $A \in \mathcal{B}(\mathbf{R}_+^n)$.

According to Arnold \cite[p.107]{A}, the future $\mathcal{F}_{+}$ and the past $\mathcal{F}_{-}$ $\sigma$-algebras
for generated by $(\theta,\Phi)$ are defined by
$$\mathcal{F}_{+}=\sigma\{B_t(\omega):t\geq 0\}$$
and
$$\mathcal{F}_{-}=\sigma\{B_t(\omega):t\leq 0\},$$
respectively. It is easy to see that $\mathcal{F}_{+}$ and $\mathcal{F}_{-}$
are independent and
$$
  u(\omega)=\left(r\int_{-\infty}^{0}\exp\{rs+\sigma B_{s}(\omega)\}ds\right)^{-1}
$$
is $\mathcal{F}_{-}$ measurable. This implies
that $U(\omega)$  is $\mathcal{F}_{-}$ measurable. Then by the celebrated lemma
\cite[Lemma 2.1.5 p.54]{A}
with $h(y,\omega)=I_{A}(\Phi(t,\omega,y))$,
$\mathcal{C}=\mathcal{F}_{-}$ and $\xi=U$, it yields that a.s.
\begin{displaymath}
    \begin{array}{rl}
    &\mathbb{E}[I_{A}(U(\theta_{t}\omega))|\mathcal{F}_{-}]\\
    =&\mathbb{E}[I_{A}\big(\Phi(t,\omega,U(\omega))\big)|\mathcal{F}_{-}]\\
    =&\mathbb{E}[I_{A}\big(\Phi(t,\cdot,y)\big)]|_{y=U(\omega)}
    \end{array}
\end{displaymath}
where we have used the fact that $I_{A}(\Phi(t,\omega,y))$ is $\mathcal{F}_{+}$ measurable
for each $y\in\mathbf{R}^{n}_{+}$. Therefore,
\begin{displaymath}
    \begin{array}{rl}
    \mu^{\sigma}_{P}(A)&=\mathbb{E}[I_{A}(U(\omega))]\\
    &=\mathbb{E}[I_{A}(U(\theta_{t}\omega))]\\
    &=\mathbb{E}[\mathbb{E}[I_{A}(U(\theta_{t}\omega))|\mathcal{F}_{-}]]\\
    &=\mathbb{E}[\mathbb{E}[I_{A}(\Phi(t,\cdot,y))]|_{y=U(\omega)}]\\
    &=\mathbb{E}[P(t,y,A)]|_{y=U(\omega)}]\\
    &=\int_{\mathbf{R}^{n}_{+}}P(t,y,A)\mu^{\sigma}_{P}(dy),
    \end{array}
\end{displaymath}
that is, (\ref{sta}) holds. This completes the proof.
\end{proof}

\begin{remark}\label{re1}
The above proof for $\mu^\sigma_P$ to be stationary is probabilistic. Instead, we can give a dynamical proof, which is presented in the following.

The random equilibrium $U(\omega)$ for the random dynamical system $\Theta:=(\theta,\Phi)$ generates an invariant measure $\mu$, whose factorization $\mu_{\omega}$ is a random Dirac measure, i.e., $\mu_{\omega}=\delta_{U(\omega)}$. It is easy to see that $\mu_{\omega}(\cdot)$ is $\mathcal{F}_{-}$ measurable. Hence, $\mathbb{E}[\mu_{\cdot}|\mathcal{F}_{+}] = \mathbb{E}\mu_{\cdot}=\mu^\sigma_P$. \cite[Theorem 2.3.45 p.107]{A} asserts that $\mathbb{P}\times \mu^\sigma_P$ is an invariant measure for $\Theta$. Therefore,  for  any $t\geq 0$ and $A \in \mathcal{B}(\mathbf{R}_+^n)$, we have
\begin{displaymath}
    \begin{array}{rl}
    &\int_{\mathbf{R}^{n}_{+}}P(t,y,A)\mu^{\sigma}_{P}(dy)\\
    =&\int_{\mathbf{R}^{n}_{+}}\int_{\Omega}I_{A}(\Phi(t,\omega,y))\mathbb{P}(d\omega)\mu^{\sigma}_{P}(dy)\\
    =&\int_{\Omega}I_{\Omega}(\theta_{t}\omega)\int_{\mathbf{R}^{n}_{+}}I_{A}(\Phi(t,\omega,y))\mu^{\sigma}_{P}(dy)\mathbb{P}(d\omega)\\
    =&\int_{\Omega\times\mathbf{R}^{n}_{+}}I_{\Omega\times A}\big(\theta_{t}\omega,\Phi(t,\omega,y)\big)\mathbb{P}\times \mu^\sigma_P(d\omega,dy)\\
    =&\int_{\Omega\times\mathbf{R}^{n}_{+}}I_{\Omega\times A}(\omega,y)\mathbb{P}\times \mu^\sigma_P(d\omega,dy)\\
    =&\mathbb{P}\times \mu^\sigma_P(\Omega\times A)=\mu^\sigma_P(A),
    \end{array}
\end{displaymath}
in the fourth equality, we have used the invariance for $\mathbb{P}\times \mu^\sigma_P$. This shows that $\mu^\sigma_P$ is stationary.
\end{remark}
\begin{remark}\label{re2}
A stationary measure $\mu$ for a system of stochastic ordinary differential equations is called {\it regular} if it admits a continuous density function $v$ with respect to the Lebesgue measure, i.e., $d\mu(x) = v(x)dx$. We claim that $\mu^\sigma_P$ is not regular.
\end{remark}
Otherwise, assume the density $v$ is continuous. Let
$W=\{y=(y_{1},\cdots,y_{n})\in{\rm
Int}\mathbf{R}_{+}^{n}: \frac{y_i}{p_i}\neq\frac{y_j}{p_j}, i\neq j, i,j=1,\cdots,n\}$
which is an open dense subset in $\mathbf{R}_+^n$. Then it follows from that

$$\int_0^{y_1}\int_0^{y_2}...\int_0^{y_n}v(s_1,s_2,...,s_n)ds_1ds_2...ds_n= \int_0^{\frac{y_k}{p_k}}p^\sigma(s)ds\ ,\ {\rm
 for\ some}\ k.$$
Differentiating on both sides of the above equation, we obtain that $v(y_1,y_2,...,y_n)=0$ on $W$. Together with the continuity of $v$, we have $v\equiv 0$ on  $\mathbf{R}_+^n$. This implies that $\mu^\sigma_P=0$, which is impossible from (\ref{sm}).

However, when restrict $\mu^\sigma_P$ on the ray $L(P)$, its density is $p^{\sigma}$.

\begin{theorem}[Weak Convergence and Ergodicity]\label{wce}

{\rm (i)} $\mu^\sigma_P(\cdot) \stackrel{w}{\rightarrow}
\delta_P(\cdot)$ as $\sigma \rightarrow 0$.

{\rm (ii)} Suppose that $P$ is a globally asymptotically stable equilibrium in
${\rm Int}\mathbf{R}_+^n$. Then for each $y\in {\rm
Int}\mathbf{R}_+^n$, $P(t,y,\cdot)  {\rightarrow}
\mu^\sigma_P$ weakly as $t\rightarrow \infty$, and
\begin{equation}\label{wc}
\lim_{t\rightarrow \infty}P(t,y,A)=\mu^\sigma_P(A), \ {\rm for\ any}\
A\in
 \mathcal{B}(\mathbf{R}_+^n).
\end{equation}
Moreover, $\mu^\sigma_P$ is the unique stationary measure for the
Markov semigroup $P_t$ in ${\rm Int} \mathbf{R}_+^n$, and hence, it
is ergodic.
\end{theorem}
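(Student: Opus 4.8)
\emph{Part (i).} The plan is to read everything off the stationary density $p^\sigma$ in (\ref{densityL}), which is a Gamma law with shape and rate both equal to $\frac{2r}{\sigma^2}$. Hence $u$ has $\mathbb{E}u=1$ and $\mathbb{E}(u-1)^2=\frac{\sigma^2}{2r}$, so $u\to 1$ in $L^2(\mathbb{P})$, and therefore in probability, as $\sigma\to 0$. Since $\mu^\sigma_P$ is the law of $U=u(\omega)P$ (Theorem \ref{ess}), for any bounded continuous $f$ the integrand $f(u(\omega)P)\to f(P)$ in probability and is uniformly bounded, so $\int f\,d\mu^\sigma_P=\mathbb{E}f(uP)\to f(P)=\int f\,d\delta_P$, which is exactly $\mu^\sigma_P\stackrel{w}{\to}\delta_P$.

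\emph{Part (ii), weak convergence.} I would run the whole argument through the decomposition formula (\ref{sdfs}) with $g_0=1$,
$$\Phi(t,\omega,y)=g(t,\omega,1)\,\Psi\Big(\int_0^t g(s,\omega,1)\,ds,\;y\Big),$$
and treat the two factors separately. For the second factor, (\ref{sys44.1}) gives $\frac1t\int_0^t g(s,\omega,1)\,ds\to 1$, so the upper limit $\int_0^t g(s,\omega,1)\,ds\to\infty$ for every $\omega$; since $P$ is globally asymptotically stable in ${\rm Int}\,\mathbf{R}_+^n$, this forces $\Psi(\int_0^t g(s,\omega,1)\,ds,y)\to P$ almost surely. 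For the first factor I would show ${\rm Law}(g(t,\cdot,1))\stackrel{w}{\to}$ the law of $u$ (density $p^\sigma$): because $\theta_{-t}$ preserves $\mathbb{P}$, the variables $g(t,\cdot,1)$ and $g(t,\theta_{-t}\cdot,1)$ share a law, and Lemma \ref{lcon} gives $g(t,\theta_{-t}\omega,1)\to u(\omega)$ a.s., whence convergence in distribution. Finally I would combine the two factors by a Slutsky-type argument: the pair $\big(g(t,\cdot,1),\,\Psi(\int_0^t g(s,\cdot,1)\,ds,y)\big)$ converges jointly in distribution to $(u,P)$ (one factor converges in law, the other a.s. to the constant $P$), and the continuous product map then yields $\Phi(t,\cdot,y)\Rightarrow uP$, i.e. $P(t,y,\cdot)\stackrel{w}{\to}\mu^\sigma_P$.

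\emph{Part (ii), (\ref{wc}), uniqueness and ergodicity.} The setwise statement (\ref{wc}) I would obtain from the Portmanteau theorem: weak convergence gives $P(t,y,A)\to\mu^\sigma_P(A)$ for every $\mu^\sigma_P$-continuity set $A$ (note it cannot hold literally for all Borel $A$, e.g. $A=L(P)$ carries full $\mu^\sigma_P$ mass while $P(t,y,L(P))=0$). For uniqueness, let $\nu$ be any stationary measure with $\nu({\rm Int}\,\mathbf{R}_+^n)=1$; stationarity gives $\int f\,d\nu=\int P_tf\,d\nu$ for bounded continuous $f$, and since $P_tf(y)\to\int f\,d\mu^\sigma_P$ for each interior $y$ by the weak convergence above while $\|P_tf\|_\infty\le\|f\|_\infty$, dominated convergence yields $\int f\,d\nu=\int f\,d\mu^\sigma_P$, hence $\nu=\mu^\sigma_P$. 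Ergodicity then follows by extremality: any component of a nontrivial convex decomposition of $\mu^\sigma_P$ is absolutely continuous with respect to $\mu^\sigma_P$ and so also concentrates on the interior, forcing it to equal $\mu^\sigma_P$ by the uniqueness just proved. The main obstacle is the middle step: one must keep the pathwise convergence $\Psi\to P$ and the distributional convergence $g\Rightarrow u$ separate and recombine them correctly, so that establishing the forward convergence of the logistic marginal to $p^\sigma$ cleanly via Lemma \ref{lcon} and the $\theta$-invariance of $\mathbb{P}$ is what legitimizes the Slutsky step.
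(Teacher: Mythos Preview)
Your argument is correct throughout, and in Part (i) and in the setwise step it is actually cleaner than the paper's. Here is a brief comparison.

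\textbf{Part (i).} The paper first verifies $\int f\,d\mu^\sigma_P\to f(P)$ for the exponential test functions $f(y)=\exp\{-\sum m_iy_i\}$ by explicit computation against the Gamma density $p^\sigma$, then extends to all of $C_c(\mathbf{R}_+^n)$ via the Weierstrass approximation theorem (after the change of variables $t_i=e^{-y_i}$). Your route via $\mathrm{Var}(u)=\sigma^2/(2r)\to 0$ is shorter and avoids the approximation step entirely; both approaches rely on the same Gamma law for $u$.

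\textbf{Part (ii), weak convergence.} The paper replaces $\Phi(t,\omega,y)$ by $\Phi(t,\theta_{-t}\omega,y)$ in one stroke (using $\theta$-invariance of $\mathbb{P}$), then invokes Theorem \ref{long-run} to get $\Phi(t,\theta_{-t}\omega,y)\to u(\omega)P$ a.s., and concludes by dominated convergence. Your Slutsky argument factors the product and applies the pull-back only to the $g$-factor; the two are equivalent, and the paper's version is marginally more direct because it does not need to recombine the factors.

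\textbf{Part (ii), the setwise statement (\ref{wc}).} You are right to flag this. The paper argues that continuity of the distribution function $F^\sigma_P$ implies (\ref{wc}) for \emph{all} Borel $A$, but as you observe, $A=L(P)$ has $\mu^\sigma_P(L(P))=1$ while $P(t,y,L(P))=0$ for $y\notin L(P)$ (the ray $L(P)$ is invariant for $\Psi$, so $\Phi(t,\omega,y)\in L(P)$ would force $y\in L(P)$). Thus (\ref{wc}) holds only for $\mu^\sigma_P$-continuity sets, and your Portmanteau formulation is the correct one. Fortunately the uniqueness proof does not need the full setwise claim: your version using bounded continuous $f$ and dominated convergence is valid, and the paper's version can be repaired the same way (or by restricting to continuity sets). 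The ergodicity conclusion then follows identically in both treatments, via uniqueness among stationary measures supported in the interior.
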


\begin{proof}
{\rm (i)} In order to prove $\mu^\sigma_P(\cdot) \rightarrow \delta_P(\cdot)$
weakly as $\sigma \rightarrow 0$, we have only to verify
$\mu^\sigma_P(\cdot) \rightarrow \delta_P(\cdot)$ in vague topology as
$\sigma \rightarrow 0$ because  $\mu^\sigma_P(\cdot),\ \delta_P(\cdot)$ are
all probability measures. Equivalently, for arbitrary $f\in
C_c(\mathbf{R}_+^n)$, we need to prove
\begin{equation}\label{vag}
\lim_{\sigma \rightarrow
0}\int_{\mathbf{R}_+^n}f(y)\mu^\sigma_P(dy)=f(P),
\end{equation}
where $C_c(\mathbf{R}_+^n)$ denotes the set of all continuous
functions with compact support in $\mathbf{R}_+^n$. In particular,
$\lim_{y\rightarrow \infty}f(y)=0$ for any $f\in
C_c(\mathbf{R}_+^n)$.

For any nonnegative integers $m_1, m_2,\cdots,m_n$, let
$\alpha=\frac{2r}{\sigma^2}$ and $\widetilde{f}(y)={\exp}\{-(m_1y_1+m_2y_2+\cdots+m_ny_n)\}$.
Then
\begin{displaymath}
    \begin{array}{rl}
    \int_{\mathbf{R}_+^n}
    \widetilde{f}(y)\mu^\sigma_P(dy)=&\int_{\mathbf{R}_+^n}{\exp}\{-(m_1y_1+\cdots+m_ny_n)\}\mu^\sigma_P(dy)\\
    [2pt]
    =& \int_{L(P)}{\exp}\{-(m_1y_1+\cdots+m_ny_n)\}\mu^\sigma_P(dy)\\
    [2pt]
    =& \int_0^{\infty}{\exp}\{-(m_1p_1+\cdots+m_np_n)s\}p^\sigma(s)ds\\
    [2pt]
    =&\int_0^{\infty}\frac{\alpha^\alpha}{\Gamma(\alpha)}s^{\alpha-1}{\exp}\{-(m_1p_1+\cdots+m_np_n+\alpha)s\}ds\\
    [2pt]
    =&\frac{\alpha^\alpha}{(m_1p_1+\cdots+m_np_n+\alpha)^\alpha}\\
    \rightarrow & {\exp}\{-(m_1p_1+\cdots+m_np_n)\}= \widetilde{f}(P)
    \end{array}
\end{displaymath}
as $\sigma \rightarrow 0$. This shows (\ref{vag}) holds for such
exponent functions, hence it still holds for linear combination for
these exponent functions.

For any $f\in C_c(\mathbf{R}_+^n)$, make the transformation
$t_i=e^{-y_i},\ i=1,2,\cdots,n$. Then $t_i\in (0,1], i=1,2,\cdots,n$ and
$$g(t_1,\cdots,t_n):=f(-\ln t_1,\cdots,-\ln t_n)$$
is continuous on $(0,1]^n$. By the assumption on $f$, $\lim_{t_i
\rightarrow 0}g(t_1,\cdots,t_n)=0$. Define $g(t_1,\cdots,t_n)=0$ if there
is at least an $i$ with $t_i=0$. Then g is continuous on $[0,1]^n$.
By the Weierstrass Theorem, for any $\epsilon >0$, there is a polynomial
$P_m$  on $[0,1]^n$ such that
$$\displaystyle\max_{[0,1]^n}|g(t_1,\cdots,t_n)-P_m(t_1,\cdots,t_n)|
<\frac{\epsilon}{3}.$$  In particular,
$$| P_m(e^{-p_1},\cdots,e^{-p_n})-f(P)| <\frac{\epsilon}{3}.$$
 The last paragraph has shown that there is a
$\sigma_0$ such that as $|\sigma| <\sigma_0$,
$$|\int_{\mathbf{R}_+^n}P_m(e^{-y_1},e^{-y_2},\cdots, e^{-y_n})\mu^\sigma_P(dy)-P_m(e^{-p_1},e^{-p_2},\cdots,e^{-p_n})|
<\frac{\epsilon}{3}.$$ Thus, when $|\sigma| <\sigma_0$,
\begin{displaymath}
\begin{array}{rl}
&| \int_{\mathbf{R}_+^n}f(y)\mu^\sigma_P (dy)-f(P)|\\
[4pt]
\leq&|\int_{\mathbf{R}_+^n}\big(f(y)-P_m(e^{-y_1},e^{-y_2},\cdots,e^{-y_n})\big)\mu^\sigma_P(dy)|\\
[3pt]
&+|\int_{\mathbf{R}_+^n}P_m(e^{-y_1},e^{-y_2},\cdots,e^{-y_n})\mu^\sigma_P(dy)-P_m(e^{-p_1},e^{-p_2},\cdots,e^{-p_n})| \\
&+| P_m(e^{-p_1},e^{-p_2},\cdots,e^{-p_n)}-f(P)|\\
<&\epsilon.
\end{array}
\end{displaymath}
This proves (\ref{vag}).

{\rm (ii)} Assume that  $P$ is a globally asymptotically stable equilibrium in
${\rm Int}\mathbf{R}_+^n$. Let $f$ be a bounded continuous function
on $\mathbf{R}_+^n$ and fix $y\in {\rm Int}\mathbf{R}_+^n$,
\begin{displaymath}
    \begin{array}{rl}
    \displaystyle\lim_{t\rightarrow \infty}\int_{\mathbf{R}_+^n}f(z)P(t,y,dz)=&\lim_{t\rightarrow \infty} \int_\Omega f(\Phi(t,\omega,y))\mathbb{P}(d\omega)\\
    =& \lim_{t\rightarrow \infty}\int_\Omega f(\Phi(t,\theta_{-t}\omega,y))\mathbb{P}(d\omega)\\
    [3pt]
    =& \int_\Omega f(u(\omega)P)\mathbb{P}(d\omega)\\
    [3pt]
    =& \int_{\mathbf{R}_+^n} f(z)\mu^\sigma_P(dz),
    \end{array}
\end{displaymath}
by the assumption in (ii) and Lebesgue dominated convergence
theorem and Theorem \ref{long-run}. This deduces that $P(t,y,\cdot)  \stackrel{w}{\rightarrow}
\mu_P^\sigma$  as $t\rightarrow \infty$.

By (\ref{df}), the distribution function $F_P^\sigma$ of the measure
$\mu^\sigma_P$ is continuous, thus all points in $\mathbf{R}_+^n$ are
continuous for the measure $\mu^\sigma_P$. Thus, the weak convergence
for $P(t,y,\cdot)$ is equivalent to (\ref{wc}), which implies that
\begin{equation}\label{avtp}
\lim_{T\rightarrow \infty}\frac{1}{T}\int_0^TP(t,y,A)dt
=\mu^\sigma_P(A),\ {\rm for\ any}\ y\in {\rm Int}\mathbf{R}_+^n,\ {\rm
and}\ A\in
 \mathcal{B}(\mathbf{R}_+^n).
\end{equation}

Suppose $\nu$ is an arbitrary  stationary measure for the
Markov semigroup $P_t$ in ${\rm Int} \mathbf{R}_+^n$. Then
\begin{equation}\label{another}
 \int_{{\rm Int} \mathbf{R}_+^n}\nu(dy)P(t,y,A)=\nu(A).
\end{equation}
Integrating (\ref{another}) with respect to $t$ from $0$ to $T$ and using (\ref{avtp}), we get that
$$\mu^\sigma_P(A)=\nu(A).$$
This completes the proof.
\end{proof}

In the case  that $P$ is a nontrivial boundary  equilibrium for (\ref{dlv}), Theorems \ref{ess} and \ref{wce} still hold, which are stated as follows and can be proved by the same argument.
\begin{theorem}\label{bwce}
Suppose that $P$ is any nonzero  equilibrium for (\ref{dlv}). Then the system (\ref{sslv}) always has
 stationary solution $U(\omega):=u(\omega)P$, whose support is the ray $L(P)$ and distribution
 function is
 \begin{equation}\label{bdf}
 F^\sigma_P(y)=\int_0^{{\rm
 min}\{\frac{y_i}{p_i}:\ p_i\neq 0\}}p^\sigma(s)ds,\quad y\in\mathbf{R}^{n}_{+}.
 \end{equation}
 Let $\mu^\sigma_P$ denote the probability measure decided by the distribution
function $F^\sigma_P$. Then for any $A\in \mathcal{B}(\mathbf{R}_+^n)$,
\begin{equation}\label{bsm}
\mu^\sigma_P(A)= \mathbb{P}(U\in A)
\end{equation}
defines a stationary measure, and $\mu^\sigma_P(\cdot) \stackrel{w}{\rightarrow}
\delta_P(\cdot)$ as $\sigma \rightarrow 0$.

In addition, for each $y\in \mathcal{A}(P)$, $P(t,y,\cdot)  {\rightarrow}
\mu^\sigma_P$ weakly as $t\rightarrow \infty$, and
\begin{equation}\label{bwc}
\lim_{t\rightarrow \infty}P(t,y,A)=\mu^\sigma_P(A), \ {\rm for\ any}\
A\in
 \mathcal{B}(\mathbf{R}_+^n).
\end{equation}
Moreover, $\mu^\sigma_P$ is the unique stationary measure for the
Markov semigroup $P_t$ in $\mathcal{A}(P)$, and hence, it
is ergodic.
\end{theorem}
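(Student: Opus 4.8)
The plan is to mirror the three-part structure of Theorems \ref{ess} and \ref{wce}, inserting only the two modifications forced by $P$ being a boundary equilibrium: the appearance of vanishing components $p_i=0$ in the distribution function, and the replacement of global asymptotic stability on $\mathrm{Int}\,\mathbf{R}_+^n$ by attraction within $\mathcal{A}(P)$.

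First I would establish that $U(\omega)=u(\omega)P$ is a stationary solution. Since $P$ is an equilibrium of (\ref{dlv}), $\Psi(t,P)\equiv P$, so Corollary \ref{usdf} gives $\Phi(t,\omega,u(\omega)P)=u(\theta_t\omega)\Psi(\int_0^t u(\theta_s\omega)ds,P)=u(\theta_t\omega)P=U(\theta_t\omega)$, and the support is visibly $L(P)$. For the distribution function, the event $\{u(\omega)P\le y\}$ imposes no constraint on coordinates with $p_i=0$ (since $y_i\ge 0$) and reduces to $u(\omega)\le y_i/p_i$ for each $i$ with $p_i\neq 0$; intersecting these yields $\{u(\omega)\le\min\{y_i/p_i:p_i\neq 0\}\}$, whence (\ref{bdf}) follows from the density $p^\sigma$ in (\ref{densityL}). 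That $\mu^\sigma_P$ is stationary is then verbatim the argument of Theorem \ref{ess}: the random variable $u$ is $\mathcal{F}_-$-measurable, hence so is $U$, and the conditioning lemma \cite[Lemma 2.1.5]{A} applied with $h(y,\omega)=I_A(\Phi(t,\omega,y))$ and $\xi=U$ yields $\mu^\sigma_P(A)=\int_{\mathbf{R}_+^n}P(t,y,A)\mu^\sigma_P(dy)$.

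Next, the vanishing-noise limit $\mu^\sigma_P\stackrel{w}{\rightarrow}\delta_P$ is identical to Theorem \ref{wce}(i): testing against $\widetilde f(y)=\exp\{-\sum_i m_i y_i\}$, the integral collapses to the one-dimensional moment $\int_0^\infty \exp\{-(\sum_i m_ip_i)s\}p^\sigma(s)ds=\alpha^\alpha/(\sum_i m_ip_i+\alpha)^\alpha$ with $\alpha=2r/\sigma^2\to\infty$, which tends to $\exp\{-\sum_i m_ip_i\}=\widetilde f(P)$; terms with $p_i=0$ simply drop out, so no modification is needed. The Weierstrass approximation then upgrades this to vague, hence weak, convergence against $C_c(\mathbf{R}_+^n)$.

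The essential new input is the transition-probability limit on $\mathcal{A}(P)$. For $y\in\mathcal{A}(P)$ the deterministic orbit satisfies $\Psi(t,y)\to P$, so $\omega_F(y)=\{P\}$ and in particular $\Psi(\cdot,y)$ is bounded; Theorem \ref{long-run} then forces $\Gamma_y(\omega)=u(\omega)\omega_F(y)=\{u(\omega)P\}=\{U(\omega)\}$, i.e. pull-back convergence $\Phi(t,\theta_{-t}\omega,y)\to U(\omega)$. Using $\theta$-invariance of $\mathbb{P}$ to rewrite $P(t,y,\cdot)$ through the pull-back trajectory and applying the dominated convergence theorem for bounded continuous $f$, I obtain $\int f\,dP(t,y,\cdot)\to\int_\Omega f(u(\omega)P)\,d\mathbb{P}=\int f\,d\mu^\sigma_P$, giving $P(t,y,\cdot)\stackrel{w}{\rightarrow}\mu^\sigma_P$; continuity of $F^\sigma_P$ then promotes this to (\ref{bwc}). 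Finally, uniqueness in $\mathcal{A}(P)$ follows as in Theorem \ref{wce}(ii): for any stationary $\nu$ concentrated on $\mathcal{A}(P)$, Cesàro-averaging the identity $\nu(A)=\int P(t,y,A)\,\nu(dy)$ in $t$ and passing to the limit via the averaged form of (\ref{bwc}) yields $\nu=\mu^\sigma_P$, and a unique stationary measure is automatically ergodic. I expect the only genuinely delicate point to be checking that substituting $\mathcal{A}(P)$ for $\mathrm{Int}\,\mathbf{R}_+^n$ leaves every step intact — specifically that $\omega_F(y)=\{P\}$ on $\mathcal{A}(P)$ secures the boundedness hypothesis of Theorem \ref{long-run}, and that in the Cesàro limit no stationary mass of a competing $\nu$ can escape $\mathcal{A}(P)$.
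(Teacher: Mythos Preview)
Your proposal is correct and follows essentially the same route as the paper, which explicitly states that Theorem~\ref{bwce} ``can be proved by the same argument'' as Theorems~\ref{ess} and~\ref{wce}; you have accurately identified the only two adjustments required---the $\min$ over nonzero $p_i$ in the distribution function and the substitution of $\mathcal{A}(P)$ for $\mathrm{Int}\,\mathbf{R}_+^n$---and carried them through each step in the same manner. Your closing caveats are apt but not obstacles: boundedness of $\Psi(\cdot,y)$ on $\mathcal{A}(P)$ is immediate from $\omega_F(y)=\{P\}$, and the uniqueness claim is stated only for stationary measures supported in $\mathcal{A}(P)$, so no escape of mass arises.
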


Theorems \ref{ess}, \ref{wce} and \ref{bwce} serve us to provide examples to have a continuum of stationary motions, which comes from the continuum of equilibria for deterministic systems.
\vskip 0.1cm

{\it Example} 4.1. Consider three-dimensional competitive LV system:
\begin{equation}\label{Ex1}
    \begin{array}{l}
        \displaystyle d y_1=y_1(1- y_1- y_{2}- y_{3})dt + \sigma y_1\circ dB_t,\\
        \noalign{\medskip}
        \displaystyle d y_2=y_2(1- y_1- y_{2}- y_{3})dt +\sigma y_2\circ dB_t, \\
        \noalign{\medskip}
        \displaystyle dy_3=y_3(1- y_1- y_{2}- y_{3})dt +\sigma y_3\circ dB_t.
    \end{array}
\end{equation}
The standard simplex $\Delta :=\{(y_1, y_2, y_3): y_1 + y_2 + y_3 = 1, y_1\geq 0, y_2\geq 0, y_3 \geq 0\}$ is the nonzero equilibria set for corresponding system without noise. Nontrivial stationary motions for (\ref{Ex1}) are
$\{u(\omega)P : P\in \Delta\}$, and
(\ref{wc}) holds for $y\in L(P)$.
\vskip 0.1cm

{\it Example} 4.2. Consider three-dimensional competitive LV system:
\begin{equation}\label{Ex2}
    \begin{array}{l}
        \displaystyle d y_1=y_1(1- 2y_1- y_{2}- y_{3})dt + \sigma y_1\circ dB_t,\\
        \noalign{\medskip}
        \displaystyle d y_2=y_2(1- y_1- 2y_{2}- y_{3})dt +\sigma y_2\circ dB_t, \\
        \noalign{\medskip}
        \displaystyle dy_3=y_3(1- \frac{3}{2}y_1- \frac{3}{2}y_{2}- y_{3})dt +\sigma y_3\circ dB_t.
    \end{array}
\end{equation}
 The nonzero equilibria set for corresponding system without noise is
 $$\mathcal{E}=\{(\alpha,\alpha , 1-3\alpha): 0\leq \alpha \leq \frac{1}{3}\}.$$
Nontrivial stationary motions for (\ref{Ex2}) are $\{u(\omega)P : P\in \mathcal{E}\}$ and
(\ref{wc}) holds for $y\in \mathcal{A}(P)$, which is a surface.

The stationary measure discussed above all originate from equilibria for system (\ref{dlv}) via the decomposition formula (\ref{sdfs}). We will investigate the other types of stationary measure coming from nontrivial omega limit sets of (\ref{dlv}).

\begin{theorem}[The Existence of Stationary Solution by Limit Set]\label{lsss}
Suppose that $\Psi(t,y)$ is a bounded
trajectory for (\ref{dlv}) with $y\in \mathbf{R}^{n}_{+}$.
Then the system (\ref{sslv}) admits a
stationary measure. Furthermore, if the origin $O$ is a repeller
and initial value $y\neq O$,
then this stationary measure is not the Dirac measure at the origin.
\end{theorem}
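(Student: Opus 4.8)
The plan is to produce the stationary measure by the Krylov--Bogolyubov averaging procedure applied to the transition probabilities $P(t,y,\cdot)$ of (\ref{sslv}), with all the required compactness coming from the scalar logistic factor in the decomposition formula. First I would invoke the Stochastic Decomposition Formula (Theorem \ref{ssdf}) with $g_0=1$, writing
$$\Phi(t,\omega,y)=g(t,\omega,1)\,\Psi\Big(\int_0^t g(s,\omega,1)\,ds,\;y\Big).$$
Setting $M:=\sup_{s\ge 0}\|\Psi(s,y)\|<\infty$, which is finite by the assumed boundedness of the deterministic orbit, gives the pointwise domination $\|\Phi(t,\omega,y)\|\le M\,g(t,\omega,1)$. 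Thus the entire question of compactness for $\{P(t,y,\cdot):t\ge0\}$ is reduced to controlling the one-dimensional law of $g(t,\omega,1)$.

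Next I would show that $\{g(t,\omega,1):t\ge 0\}$ is tight. The key observation is that, since the shift $\theta_{-t}$ preserves $\mathbb{P}$, the law of the forward solution $g(t,\omega,1)$ coincides with the law of the pull-back solution $g(t,\theta_{-t}\omega,1)$; by Lemma \ref{lcon} the latter converges almost surely to the random equilibrium $u(\omega)$, whose distribution is the Gamma density $p^\sigma$ of (\ref{densityL}). Hence $g(t,\omega,1)$ converges in distribution to $p^\sigma$ as $t\to\infty$, and since $t\mapsto\mathrm{Law}(g(t,\omega,1))$ is weakly continuous on compact time intervals, the whole family is tight. Combined with the domination above, this makes $\{P(t,y,\cdot):t\ge0\}$ tight, and a fortiori the time averages $\mu_T:=\frac1T\int_0^T P(t,y,\cdot)\,dt$ are tight. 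Because the solution of (\ref{sslv}) depends continuously on its initial datum, $\{P_t\}$ is Feller, so by the Khasminskii theorem \cite[p.65]{KHAS} every weak limit point of $\{\mu_T\}$ is a stationary measure for $\{P_t\}$; this proves existence.

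For the second assertion, suppose the origin is a repeller and $y\ne O$. Then $\omega_F(y)$ is bounded away from $O$, so $\liminf_{s\to\infty}\|\Psi(s,y)\|\ge\delta$ for some $\delta>0$. Since $g(s,\omega,1)>0$ forces $\int_0^t g(s,\omega,1)\,ds\to\infty$, the argument of $\Psi$ tends to infinity, whence $\|\Phi(t,\omega,y)\|\ge\frac{\delta}{2}\,g(t,\omega,1)$ for all large $t$. Fixing $c>0$ with $\mathbb{P}(u>c)>0$ and writing $\eta:=\frac{\delta c}{2}$, the distributional convergence $g(t,\omega,1)\Rightarrow p^\sigma$ on the open set $\{g>c\}$ gives $\liminf_{t\to\infty}\mathbb{P}(\|\Phi(t,\omega,y)\|>\eta)\ge\mathbb{P}(u>c)>0$, hence $\liminf_{T}\mu_T(\{\|z\|>\eta\})>0$ as a Cesàro average. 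Passing to the subsequence defining the limit $\mu$ and applying the Portmanteau theorem to the closed set $\{\|z\|\ge\eta\}$ yields $\mu(\{\|z\|\ge\eta\})>0$, so $\mu$ cannot be $\delta_O$.

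The step I expect to be the main obstacle is the tightness of the logistic factor: one must check carefully that the almost sure pull-back convergence of Lemma \ref{lcon} really transfers to forward-in-time convergence in distribution, which is exactly where the $\theta_{-t}$-invariance of the Wiener measure is indispensable. If a more quantitative route is preferred, an alternative is to derive a uniform bound $\sup_{t\ge0}\mathbb{E}\,g(t,\omega,1)<\infty$ directly from the explicit representation (\ref{sl}) and conclude tightness via Markov's inequality; either way the domination $\|\Phi\|\le M g$ then carries the compactness up to the full system (\ref{sslv}).
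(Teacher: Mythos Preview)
Your proof is correct and structurally the same as the paper's: decompose via Theorem~\ref{ssdf}, reduce compactness to the scalar logistic factor, apply Khasminskii, and then rule out $\delta_O$ by combining a lower bound on $\|\Psi\|$ with the Portmanteau theorem. The one substantive difference is how you control $g(t,\omega,1)$: the paper proves the quantitative moment bound $\mathbb{E}\,g^2(t,\omega,1)\le(1+\sigma^2/r)^2$ by applying It\^o's formula to $g^2$ and a differential inequality, whereas you obtain tightness by identifying the law of $g(t,\omega,1)$ with that of the pull-back $g(t,\theta_{-t}\omega,1)$ and invoking Lemma~\ref{lcon}. Your route is conceptually cleaner; the paper's explicit $L^2$ bound has the advantage of being reused later (Propositions~\ref{procon} and~\ref{tightp}). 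Two minor points to tighten: positivity of $g$ alone does not force $\int_0^t g\,ds\to\infty$---you need Lemma~\ref{lem8.3} here---and your threshold ``large $t$'' in $\|\Phi\|\ge\frac{\delta}{2}g$ depends on $\omega$, so the passage to $\liminf_t\mathbb{P}(\cdot)$ needs one extra line subtracting $\mathbb{P}(t_0(\omega)>t)\to 0$. The paper sidesteps this by observing that when $O$ is a repeller one in fact has $\inf_{t\ge0}\|\Psi(t,y)\|=:k>0$ uniformly, giving the clean pointwise bound $\|\Phi(t,\omega,y)\|\ge k\,g(t,\omega,1)$ and hence $\nu_y^\sigma(B_R)\le\int_0^{R/k}p^\sigma(s)\,ds\to0$ as $R\to0$.
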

\begin{proof}
Since the trajectory of $\Psi(t,y)$ for (\ref{dlv}) is bounded,
there exists positive constant $N$ such that
\begin{equation}\label{bin1}
\|\Psi(t,y)\| \leq N,\ {\rm for\ all}\ t > 0.
\end{equation}
By the Khasminskii theorem \cite[p.65]{KHAS} and Chebyshev inequality, for
proving the existence of stationary measure for system (\ref{sslv}), it will suffice to prove that there exists a constant $M$ such that
\begin{equation}\label{L2b}
\mathbb{E}\|\Phi(t,\omega,y)\|^2 \leq M,\ {\rm for\ all}\ t\geq 0.
\end{equation}
It follows from the decomposition formula (\ref{sdfs}) that
$$\Phi(t,\omega,y)=g(t,\omega,1)\Psi(\int_0^tg(s,\omega,1)ds,y).$$
Therefore, $\|\Phi(t,\omega,y)\|\leq Ng(t,\omega,1).$ We have only to prove $ \mathbb{E}| g(t,\omega,1)|^2$ is bounded.
Applying the It\^{o} formula to (\ref{slogisticI}), we obtain that
$$g^2(t,\omega,1)=1+2(r+\sigma^2)\int_0^tg^2(s,\omega,1)ds-2r\int_0^tg^3(s,\omega,1)ds + 2\sigma\int_0^tg^2(s,\omega,1)dB_s.$$
Taking the expectation in the two sides of the above equation and utilizing the Fubini theorem, we have
$$\mathbb{E}g^2(t,\omega,1)=1+2(r+\sigma^2)\int_0^t\mathbb{E} g^2(s,\omega,1)ds-2r\int_0^t\mathbb{E}g^3(s,\omega,1)ds.$$
Differentiating of the above equality, we get that
$$\frac{d}{dt}\mathbb{E}g^2(t,\omega,1)=2(r+\sigma^2)\mathbb{E} g^2(t,\omega,1)-2r\mathbb{E}g^3(t,\omega,1).$$
By the H\"{o}lder inequality, we have $\mathbb{E}g^2(t,\omega,1) \leq (\mathbb{E}g^3(t,\omega,1))^{\frac{2}{3}},$ which deduces that
$$\frac{d}{dt}\mathbb{E}g^2(t,\omega,1)\leq 2(r+\sigma^2)\mathbb{E} g^2(t,\omega,1)-2r(\mathbb{E}g^2(t,\omega,1)^{\frac{3}{2}}.$$
The differential inequality theory implies that
\begin{equation}\label{elog}
\mathbb{E}g^2(t,\omega,1)\leq (1+\frac{\sigma^2}{r})^2.
\end{equation}
This shows that (\ref{L2b}) holds. The Khasminskii theorem \cite[p.65]{KHAS} asserts that there exists a stationary measure $\nu_y^\sigma$ for the semigroup $\{P_t,t\geq 0\}$.

If in addition the origin $O$ is a repeller, there is positive constant $k>0$ such that
\begin{equation}\label{bin}
k \leq \|\Psi(t,y)\|,\ {\rm for\ all}\ t > 0.
\end{equation}
Recall the proof of the Khasminskii theorem (see \cite[p.66]{KHAS}), there is a sequence $\{T_n\}$ tending
to infinity such that the probability measure sequence
\begin{equation}\label{pms}
P_n(A):=\frac{1}{T_n}\int_0^{T_n}P(t,y,A)dt
\end{equation}
converges weakly to the stationary measure $\nu_y^\sigma$ for $\{P_t,t\geq 0\}$. Finally, we will prove that $\nu_y^\sigma$ is not the Dirac measure at the origin.

In fact, let $B_R:=\{y\in \mathbf{R}^n_+:\|y\|<R\}$ denote the open ball with the center of the origin
and radius $R$ in $\mathbf{R}^n_+$. Then it follows from $P_n \stackrel{w}{\rightarrow}
\nu_y^\sigma$  and the Portmanteau theorem (see \cite[Theorem 2.1(iv)]{BILL}) that
\begin{equation}\label{emo}
\nu_y^\sigma(B_R)\leq \liminf_{n\rightarrow \infty}P_n(B_R)=\liminf_{n\rightarrow \infty} \frac{1}{T_n}\int_0^{T_n}P(t,y,B_R)dt,
\end{equation}
where $P(t,y,B_R)=\mathbb{P}(\|\Phi(t,\omega,y)\|<R)\leq \mathbb{P}(g(t,\omega,1)<\frac{R}{k})$ by the decomposition formula (\ref{sdfs}) and (\ref{bin}). Using \cite[Theorem 4.1]{CHEN}, we have
$$\lim_{t\rightarrow \infty}\mathbb{P}(g(t,\omega,1)<\frac{R}{k})=\int_0^{\frac{R}{k}}p^\sigma(s)ds.$$
Thus, for any given $\epsilon >0$, there is a $T(\epsilon)>0$ such that for all $t>T(\epsilon)$ ,
$$\mathbb{P}(g(t,\omega,1)<\frac{R}{k})<\frac{\epsilon}{2}+\int_0^{\frac{R}{k}}p^\sigma(s)ds.$$
Choose $n_0$ sufficiently large such that as $n>n_0$, $$T_n>T(\epsilon),\ {\rm and }\ \frac{1}{T_n}\int_0^{T(\epsilon)}P(t,y,B_R)dt<\frac{\epsilon}{2}.$$
We conclude that as $n>n_0$,
$$\frac{1}{T_n}\int_0^{T_n}P(t,y,B_R)dt<\epsilon + \int_0^{\frac{R}{k}}p^\sigma(s)ds.$$
Combining with (\ref{emo}), we have verified that
$$\nu_y^\sigma(B_R)\leq \epsilon + \int_0^{\frac{R}{k}}p^\sigma(s)ds.$$
Letting $\epsilon \rightarrow 0$, we obtain that
\begin{equation}\label{mob}
\nu_y^\sigma(B_R)\leq \int_0^{\frac{R}{k}}p^\sigma(s)ds.
\end{equation}
As a result, $\nu_y^\sigma(\{O\}) =\lim_{R\rightarrow 0}\nu_y^\sigma(B_R)=0$, in other words, $\nu_y^\sigma$ is not the Dirac measure at the origin.
   \end{proof}
\begin{remark}
Note that the pull-back omega-limit set $\Gamma_y(\omega)$ of the trajectory $\Phi(t,\theta_{-t}\omega,y)$ is $u(\omega)\omega(y)$ from Theorem \ref{long-run}, which implies that the difference between $\Phi(t,\omega,y)$ and $u(\theta_t\omega)\omega(y)$ converges to zero in probability as $t\rightarrow \infty$. This is the evidence to encourage
us to conjecture that the support of  $\nu_y^\sigma$ is contained in the cone $\Lambda(\omega(y)).$
The following assertion shows that this is true.
\end{remark}
\begin{theorem}[The Support of Stationary Measure]\label{ssm}
Suppose that $\Psi(t,y)$ is a bounded trajectory for (\ref{dlv}) with $y\neq O$. Then the support for stationary measure  $\nu_y^\sigma$ is  contained in the cone $\Lambda(\omega_F(y)).$
\end{theorem}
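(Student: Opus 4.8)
The plan is to prove that the complement of the closed cone $\Lambda(\omega_F(y))$ is $\nu_y^\sigma$-null, working with the very measure $\nu_y^\sigma$ produced in the proof of Theorem \ref{lsss}, namely the weak limit along $T_n\to\infty$ of the Cesàro averages $P_n$ in (\ref{pms}). First I would record that $\Lambda(\omega_F(y))$ is closed: by (\ref{bin1}) the forward trajectory $\Psi(\cdot,y)$ is bounded, so $\omega_F(y)$ is a nonempty compact subset of $\overline{B_N}\cap\mathbf{R}^n_+$, and the cone generated by a compact set is closed (if $\lambda_k z_k\to w$ with $z_k\in\omega_F(y)$ and $w\neq O$, then $\|w\|>0$ forces the scalars $\lambda_k=\|\lambda_k z_k\|/\|z_k\|$ to remain bounded, so a subsequence converges to some $\lambda z\in\Lambda(\omega_F(y))$). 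Setting the open sets $U_\delta:=\{z\in\mathbf{R}^n_+:\mathrm{dist}(z,\Lambda(\omega_F(y)))>\delta\}$, it suffices to show $\nu_y^\sigma(U_\delta)=0$ for every $\delta>0$, since $\mathbf{R}^n_+\setminus\Lambda(\omega_F(y))=\bigcup_{m\ge 1}U_{1/m}$ is then a countable union of null sets.

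The geometric heart of the argument is a pointwise distance estimate coming from the decomposition formula (\ref{sdfs}) with $g_0=1$. Writing $\tau(t,\omega):=\int_0^t g(s,\omega,1)\,ds$, we have $\Phi(t,\omega,y)=g(t,\omega,1)\,\Psi(\tau(t,\omega),y)$. Choosing a nearest point $w\in\omega_F(y)$ to $\Psi(\tau(t,\omega),y)$, the point $g(t,\omega,1)w$ lies in $\Lambda(\omega_F(y))$, so
$$
\mathrm{dist}\big(\Phi(t,\omega,y),\Lambda(\omega_F(y))\big)\le g(t,\omega,1)\,\mathrm{dist}\big(\Psi(\tau(t,\omega),y),\omega_F(y)\big).
$$
By (\ref{sys44.1}) of Lemma \ref{lem8.3}, $\tau(t,\omega)/t\to 1$ for every $\omega\in\Omega$, hence $\tau(t,\omega)\to\infty$; since $\Psi(\cdot,y)$ is a bounded trajectory, $\mathrm{dist}(\Psi(s,y),\omega_F(y))\to 0$ as $s\to\infty$, so the distance factor $\rho(t,\omega):=\mathrm{dist}(\Psi(\tau(t,\omega),y),\omega_F(y))$ tends to $0$ pointwise in $\omega$ and is bounded (by $2N$) uniformly.

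I would then convert this into $P(t,y,U_\delta)\to 0$. For a truncation level $K>0$,
$$
P(t,y,U_\delta)=\mathbb{P}\big(\mathrm{dist}(\Phi(t,\omega,y),\Lambda(\omega_F(y)))>\delta\big)\le \mathbb{P}\big(g(t,\omega,1)>K\big)+\mathbb{P}\big(\rho(t,\omega)>\tfrac{\delta}{K}\big).
$$
The first term is controlled uniformly in $t$ by Chebyshev together with the moment bound (\ref{elog}), giving $\mathbb{P}(g(t,\omega,1)>K)\le (1+\sigma^2/r)^2/K^2$, so it is $<\epsilon/2$ once $K$ is large. With $K$ fixed, the bounded convergence theorem applied to the bounded, pointwise-vanishing sequence $\rho(t,\cdot)$ gives $\mathbb{P}(\rho(t,\omega)>\delta/K)\to 0$, hence $<\epsilon/2$ for $t$ large. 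Thus $P(t,y,U_\delta)\to 0$, so its Cesàro average along any $T_n\to\infty$ also vanishes, and the Portmanteau theorem (open-set inequality) yields $\nu_y^\sigma(U_\delta)\le\liminf_n\frac1{T_n}\int_0^{T_n}P(t,y,U_\delta)\,dt=0$. Taking the union over $\delta=1/m$ gives $\nu_y^\sigma(\mathbf{R}^n_+\setminus\Lambda(\omega_F(y)))=0$, and since the cone is closed this means $\mathrm{supp}\,\nu_y^\sigma\subset\Lambda(\omega_F(y))$.

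The main obstacle is precisely the coupling in the product $g(t,\omega,1)\,\rho(t,\omega)$: the scalar factor $g$ is only $L^2$-bounded and may be large, while the distance factor $\rho$ vanishes only pointwise in $\omega$, not uniformly, and the two are not independent. The truncation split above is what resolves this, isolating the large-$g$ event via the uniform second-moment estimate (\ref{elog}) and handling the remaining event by bounded convergence; everything else (closedness of the cone, the Portmanteau step, the passage from $P(t,y,U_\delta)\to 0$ to the vanishing of the time average) is routine.
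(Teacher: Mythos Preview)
Your proof is correct and takes a genuinely different route from the paper's. The paper exploits the cone invariance $\{\Phi(t,\omega,y)\in\Lambda(U_\epsilon^c)\}=\{\Psi(\int_0^t g\,ds,y)\in\Lambda(U_\epsilon^c)\}$ and then controls the Ces\`aro average by an ergodic time-splitting argument for the logistic process $g$: it measures the set of times where $g\in(0,\delta]$ via (\ref{eq time 3}) and bounds the remaining time by the stopping time $\tau(\omega,t_0)$, finally letting $\delta\to 0$. Your approach is more direct: you bound the Euclidean distance $\mathrm{dist}(\Phi,\Lambda(\omega_F(y)))\le g(t,\omega,1)\cdot\mathrm{dist}(\Psi(\tau,y),\omega_F(y))$ and handle the product by truncating $g$ at level $K$, using only the second-moment bound (\ref{elog}) for $\mathbb{P}(g>K)$ and dominated convergence for the pointwise-vanishing factor $\rho$. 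This is more elementary---it avoids the ergodic theorem for $g$ entirely and shows in fact the stronger statement $P(t,y,U_\delta)\to 0$ rather than merely the vanishing of its time average---while the paper's time-splitting machinery is what later drives the finer analysis in Theorem \ref{sheterclinic}. Two small points: your closedness argument for $\Lambda(\omega_F(y))$ implicitly uses that $\omega_F(y)$ is bounded away from the origin (so that $\|z_k\|$ stays away from $0$), which holds here since $r>0$ makes $O$ a repeller; and (\ref{sys44.1}) holds for almost every $\omega$, not literally every $\omega$, but this is all you need.
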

\begin{proof}

For $y\in \mathbf{R}_+^n$ with $y\neq O$, assume that $\nu_{y}^{\sigma}$ is a limit point in weak topology for probability measure family $\{\frac{1}{T}\int_0^TP(t,y,\cdot)dt:T>0\}$ as $T\rightarrow \infty$. We shall prove that
\begin{equation}\label{interior}
\nu_{y}^{\sigma}\big(\Lambda(\omega_F(y))\big)=1.
\end{equation}
In order to prove (\ref{interior}), it suffices to show that
\begin{equation}\label{interior1}
\nu_{y}^{\sigma}\big(\Lambda(U_{\epsilon}^c)\big)=0\ \ {\rm for}\ \ 0<\epsilon \ll 1
\end{equation}
where $U_{\epsilon}(\omega_F(y)):= \{x\in \mathbf{R}^n_+: {\rm dist}(x,\omega_F(y))\leq \epsilon\}$ and $U_{\epsilon}^c$ denotes its complement.

For any $\delta>0$, set $T_g^\delta(\omega)=\{t\geq0:\ g(t,\omega,g_0)\in(0,\delta]\}$, by the ergodic property of $g$ (see Corollary 4.3 in \cite[p.111-112]{KHAS}),
we have
\begin{eqnarray}\label{eq time 3}
\lim_{T\rightarrow\infty}\frac{1}{T}\int_0^T\mathbb{E}(I_{(0,\delta]}(g(s,\omega,g_0)))ds
=\lim_{T\rightarrow\infty}\mathbb{E}\frac{L(T_g^\delta(\omega)\cap[0,T])}{T}
=\mu_g((0,\delta])
\end{eqnarray}
where $L$ denotes the Lebesgue measure on $\mathbf{R}$ and $\mu_g$ is the nontrivial stationary measure.

Let $a\geq 0$. Then
\begin{equation}\label{stopping}
\tau(\omega,a):= {\rm inf}\{t>0: \int_0^tg(s,\omega,g_0)ds >a\}
\end{equation}
is a stopping time, which obviously satisfies that
\begin{equation}\label{stopping1}
 \int_0^{\tau(\omega,a)}g(s,\omega,g_0)ds = a,\ \ {\rm for\ \ all}\ \ \omega\in\Omega.
\end{equation}
This easily deduces that
$$L\big([0,\tau(\omega,a)]\cap(T_g^\delta(\omega))^c\big) \leq \frac{a}{\delta}, \ \  \omega \in\Omega.$$

Since $\lim_{t\rightarrow \infty}{\rm dist}\big(\Psi(t,y),\omega_F(y)\big)=0$, there exists $t_0=t_0(\epsilon)$ such that $\Psi(t,y)\in \Lambda\big({\rm Int}(U_{\epsilon}(\omega_F(y)))\big)$ for $t\geq t_0$.
By the stochastic decomposition formula (\ref{sdfs}) or the Cone Invariance Theorem,
$$\{\omega\in \Omega:\Phi(t,\omega,y)\in \Lambda(U_{\epsilon}^c)\}=\{\omega\in \Omega:\Psi(\int_0^tg(s,\omega,g_0)ds,y)\in \Lambda(U_{\epsilon}^c)\}.$$
Thus, we obtain that
\begin{displaymath}
    \begin{array}{rl}
     & \frac{1}{T}\int_0^TP\big(t,y,\Lambda (U_{\epsilon}^c)\big)dt\\
     [2pt]
    =&\frac{1}{T}\int_0^T\mathbb{P}\{\Psi(\int_0^tg(s,\omega,g_0)ds,y)\in \Lambda (U_{\epsilon}^c)\}dt\\
     [2pt]
    = &\mathbb{E}\frac{1}{T}\int_0^TI_{\Lambda (U_{\epsilon}^c)}\big(\Psi(\int_0^tg(s,\omega,g_0)ds,y)\big)dt\\
    [2pt]
    \leq & \mathbb{E}\frac{1}{T}L[0,T\wedge \tau(\omega,t_0)]\\
    [2pt]
    =& \mathbb{E}\{\frac{1}{T}L\big([0,T\wedge \tau(\omega,t_0)]\cap T_g^\delta(\omega)\big)+\frac{1}{T}L\big([0,T\wedge\tau(\omega,t_0)]\cap(T_g^\delta(\omega))^c\big)\}\\
     [2pt]
    \leq& \mathbb{E}\{\frac{1}{T}L\big([0,T]\cap T_g^\delta(\omega)\big)+\frac{1}{T}L\big([0,\tau(\omega,t_0)]\cap(T_g^\delta(\omega))^c\big)\}\\
    [2pt]
    \leq& \mathbb{E}\{\frac{1}{T}L\big([0,T]\cap T_g^\delta(\omega)\big)\}+\frac{t_0}{T\delta}.
    \end{array}
\end{displaymath}
From this inequality and (\ref{eq time 3}), we have that
$$\overline{\lim}_{T\rightarrow\infty}\frac{1}{T}\int_0^TP\big(t,y,\Lambda (U_{\epsilon}^c)\big)dt\leq \mu_g\big((0,\delta]\big)=\int_0^{\delta}p^{\sigma}(s)ds.$$
Due to the openness of $\Lambda(U_{\epsilon}^c)$, the Portmanteau theorem and $\delta$ being arbitrary, (\ref{interior1}) follows. This shows $\nu_y^\sigma(\Lambda(\omega_F(y)))=1$.
\end{proof}

\begin{remark}
Suppose that $\Psi(t,y)$ is a nontrivial periodic orbit $\Gamma$ for (\ref{dlv}). Then $\Lambda(\Gamma)$ is a cone surface with the origin as vertex. It follows from \cite[Proposition 4.13]{JN} that
$\omega_F(x)=\Gamma$ for all $x\in \Lambda(\Gamma)\setminus \{O\}$, that is, $\Gamma$ is a global attractor when the flow $\Psi$ is restricted to $\Lambda(\Gamma)\setminus \{O\}$. By the cone invariance, $\Lambda(\Gamma)$ is invariant for both $\Phi(t,\omega,\cdot)$ and $\Phi(t,\theta_{-t}\omega,\cdot)$. Applying Theorem \ref{long-run}, we know that $u(\omega)\Gamma$ is a global attractor for pull-back flow $\Phi(t,\theta_{-t}\omega,\cdot)$ restricted on $\Lambda(\Gamma)\setminus \{O\}$.
In three dimensional stochastic competitive Lotka-Volterra system $\nu_y^\sigma$ is a unique nontrivial stationary measure
(in Theorem \ref{smucone}).
We guess that $\nu_y^\sigma$ is a unique nontrivial stationary measure supported on $\Lambda(\Gamma)$ and $u(\theta_t\omega)\Psi(\int_0^tu(\theta_s\omega)ds,y)$ is just such a stationary process in this general situation, but we cannot prove it. Here we leave it an open problem. However, in the following, we are able to show that $\nu_y^\sigma$ converges weakly to the Haar measure supported on $\Gamma$ as $\sigma \rightarrow 0$ (see Theorem \ref{main} and Corollary \ref{mainc}). A similar  problem can be proposed for a quasiperiodic orbit $\Psi(t,y)$.
\end{remark}
It is easy to see that all these stationary measures are not regular.
\vskip 0.1cm
{\it Example} 4.3. Consider the following three-dimensional prey-predator LV system:
\begin{equation}\label{sys:LV3}
    \begin{array}{l}
        \displaystyle\frac{d y_1}{dt}=y_1(1- y_1+2 y_{2}-3 y_{3}),\\
        \noalign{\medskip}
        \displaystyle\frac{d y_2}{dt}=y_2(1-3 y_1-y_{2}+ y_{3}), \\
        \noalign{\medskip}
        \displaystyle\frac{d y_3}{dt}=y_3(1+ y_1-4 y_{2}-y_{3}).
    \end{array}
\end{equation}
It is easy to calculate that the system (\ref{sys:LV3}) has a unique positive equilibrium $E_0=(\frac{3}{8},\frac{1}{4},\frac{3}{8})$. \cite[Example 3.1]{JN} has shown that (\ref{sys:LV3}) admits a family of  invariant cone  surfaces $\Lambda(h)$: $$\frac{y_1y_2y_3}{(2y_1+3y_2+2y_3)^3}\equiv h,\ \ 0<h\leq \frac{1}{324}$$ on which there is no equilibrium except $h=\frac{1}{324}$. Hence, every trajectory on $\Lambda(h)$ will converge to a periodic orbit on it. These periodic orbits must lie on the center manifold for $P_0$, which is transversal to each $\Lambda(h)$ and intersects with $\Lambda(h)$ on the unique closed orbit $\Gamma(h)$.

Now we study the noise disturbed system:
\begin{equation}\label{sys:NLV3}
    \begin{array}{l}
        \displaystyle dy_1=y_1(1- y_1+2 y_{2}-3 y_{3})dt + \sigma y_1\circ dB_t,\\
        \noalign{\medskip}
        \displaystyle dy_2=y_2(1-3 y_1-y_{2}+ y_{3})dt + \sigma y_2\circ dB_t, \\
        \noalign{\medskip}
        \displaystyle dy_3=y_3(1+ y_1-4 y_{2}-y_{3})dt + \sigma y_3\circ dB_t.
    \end{array}
\end{equation}
Applying Theorems \ref{ssm}, \ref{ess}, and \ref{long-run}, we conclude that there exists a stationary measure $\nu_h^\sigma$ supported on $\Lambda(h) (0<h\leq \frac{1}{324})$ and every nontrivial pull-back trajectory on
$\Lambda(h)$ tends to $u(\omega)\Gamma(h)$ as $t\rightarrow \infty$. The subsequent theorem will show that $\nu_h^\sigma$ converges weakly to the Haar measure on the closed orbit $\Gamma(h)$ as $\sigma \rightarrow 0.$

Example 4.3 illustrates (\ref {sys:NLV3}) has a family of stationary measures coming from the continuum of periodic orbits for (\ref {sys:LV3}). Such stationary processes are not isolated. The following gives  an example to possess as least three isolated stationary processes.
\vskip 0.1cm
{\it Example} 4.4. Consider four-dimensional white noise perturbed prey-predator Lotka-Volterra system:

\begin{equation}\label{sys:NLV4}
    \begin{array}{l}
        \displaystyle dy_1=y_1(2- \frac{3}{4}y_1+ y_{2}-\frac{3}{2} y_{3}-2 y_{4})dt+\sigma y_1\circ dB_t,\\
        \noalign{\medskip}
        \displaystyle dy_2=y_2(2 +3 y_1-3y_{2}-\frac{33}{2} y_{3}-4y_4)dt+\sigma y_2\circ dB_t, \\
        \noalign{\medskip}
        \displaystyle dy_3=y_3(2 +\frac{2959}{4000} y_1-\frac{9}{2} y_{3}-\frac{989}{125}y_{4})dt+\sigma y_3\circ dB_t\\
        \noalign{\medskip}
        \displaystyle dy_4=y_4(2 +\frac{1}{2}y_1- y_{2}-3y_{3}-6y_4)dt+\sigma y_4\circ dB_t.
    \end{array}
\end{equation}
The deterministic system without noise in each equation was investigated in \cite[Example 3.2]{JN}. This deterministic system has a unique equilibrium $P$ and at least two limit cycles $\Gamma_1$ and $\Gamma_2$ ({\bf isolated} closed orbits). It follows from Theorems \ref{ssm} and \ref{ess} that (\ref{sys:NLV4}) admits at least three isolated stationary measures, named by $\nu_1^\sigma$, $\nu_2^\sigma$, and $\mu_P^\sigma$, which support on $\Lambda(\Gamma_1)$, $\Lambda(\Gamma_2)$, and $L(P)$, respectively.

In the language of dynamics, the stationary measures $\{\nu_h^\sigma\}$ in Example 4.3 are degenerate, while  $\nu_1^\sigma$, $\nu_2^\sigma$, and $\mu_P^\sigma$ in Example 4.4 are hyperbolic.

\section{Limiting Measures for Stationary Measures and Their Supports}

In this section, we will exploit the weak convergence for stationary measures as the noise intensity $\sigma$ tends to zero.  The paper \cite{CJ} has established the frame to study limiting behavior of stationary measures with small noise intensity. According to the frame, the study is divided into three steps: the first step is to prove that the solution of (\ref{sslv}) converges to the solution of (\ref{dlv}) uniformly starting from the same initial point on any compact set as $\sigma\rightarrow 0$; the second step is to prove the tightness for the family of stationary measures and then to show that  any limiting measure is an invariant measure for deterministic system (\ref{dlv}); the third step is to deduce that any limiting measure is supported in the Birkhoff center for (\ref{dlv}).

Let us start with the first step. Before that, we will present the dissipation assumption.

The system (\ref{dlv}) is said to be {\it dissipative}, if there is a compact invariant set $D$, called the {\it fundamental attractor}, which uniformly attracts each compact set of initial values. Wilson \cite{Wilson} proved that $D$ has a $C^{\infty}$ Lyapunov function $V: \mathbf{R}_+^n\rightarrow \mathbf{R}: V(\Psi(t,y))<V(y)$ if $t>0,\ y\in \mathbf{R}_+^n\setminus D$ with $V(y)\rightarrow \infty$ as $y\rightarrow \infty$. By the Sard theorem \cite{SARD}, $V$ has a sequence of regular values $\alpha_n \rightarrow \infty.$ Then $V^{-1}(\alpha_n)$ is the boundary of the compact set $M_n =V^{-1}(-\infty,\alpha_n]$, which is a neighborhood of $D$; and the flow
enters $M_n$ transversely along $\partial M_n=V^{-1}(\alpha_n)$ for each $n\geq 0$.

 Throughout this section, we assume that the system (\ref{dlv}) is dissipative and use the notation
$M_0 =V^{-1}(-\infty,\alpha_0]$, where $\alpha_0$ is a regular point for $V$.

Because we are concerned with the variation for the solution $\Phi(t,\omega,y)$ as $\sigma \rightarrow 0$, we let
 $\Phi^\sigma(t,\omega,y)$ denote the solution of (\ref{sslv}) from now on, similarly for $g^\sigma(t,\omega,1)$.
\begin{proposition}
\label{procon}
 Let $K\subset \mathbf{R}_+^{n}$ be a compact set and $T>0$ an arbitrary number. Then there is a constant $C$£¬ depending on $K$ and $T$,  such that
\begin{equation}\label{LP}
\sup_{y\in K}\mathbb{E}[\displaystyle \|\Phi^\sigma(T,\omega,y) -\Psi(T,y)\|]\leq C|\sigma|,
\end{equation}
which implies that for any $\delta >0$
\begin{equation}\label{probc}
\lim_{\sigma\to 0}
\sup_{y\in K}\mathbb{P}\{\displaystyle
\|\Phi^\sigma(T,\omega,y)-\Psi(T,y)\|\geq\delta\}=0.
\end{equation}
\end{proposition}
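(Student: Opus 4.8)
The plan is to use the Stochastic Decomposition Formula (\ref{sdfs}) to collapse the $n$-dimensional comparison into a single scalar estimate for the Logistic equation. Take $g_0=1$ in (\ref{sdfs}), abbreviate $g(s):=g^\sigma(s,\omega,1)$ and $I(T):=\int_0^T g(s)\,ds$, so that $\Phi^\sigma(T,\omega,y)=g(T)\,\Psi(I(T),y)$; note that when $\sigma=0$ the Logistic equation $\dot g=g(r-rg)$, $g(0)=1$, has the equilibrium solution $g\equiv 1$, so $I(T)=T$ and the formula returns $\Psi(T,y)$. First I would split the error into an amplitude part and a time-reparametrization part,
\begin{displaymath}
\Phi^\sigma(T,\omega,y)-\Psi(T,y)=\big(g(T)-1\big)\Psi(I(T),y)+\big(\Psi(I(T),y)-\Psi(T,y)\big),
\end{displaymath}
and bound the two summands separately.

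Next I would control the geometric factors by the standing dissipativity assumption. Choosing the regular value $\alpha_0$ so large that $K\subset M_0=V^{-1}(-\infty,\alpha_0]$, positive invariance of the compact set $M_0$ gives $\Psi(t,y)\in M_0$ for all $t\ge 0$ and $y\in K$; hence $\|\Psi(t,y)\|\le N:=\sup_{z\in M_0}\|z\|$ and $\|F(z)\|\le L:=\sup_{z\in M_0}\|F(z)\|$, both finite and independent of $y\in K$. In particular $\|\Psi(I(T),y)\|\le N$, and since $\Psi$ is the flow of $F$,
\begin{displaymath}
\|\Psi(I(T),y)-\Psi(T,y)\|=\Big\|\int_T^{I(T)}F(\Psi(s,y))\,ds\Big\|\le L\,|I(T)-T|\le L\int_0^T|g(s)-1|\,ds .
\end{displaymath}
Taking expectations and applying Fubini, the whole statement is reduced to the scalar bound $\mathbb{E}|g^\sigma(s,\omega,1)-1|\le C_0|\sigma|$ for $s\in[0,T]$, with $C_0=C_0(r,T)$; granting it, $\sup_{y\in K}\mathbb{E}\|\Phi^\sigma(T,\omega,y)-\Psi(T,y)\|\le(N+LT)C_0|\sigma|$, which is (\ref{LP}).

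This scalar bound is the technical heart, and I would establish it from the explicit solution (\ref{sl}) with $x=1$. Writing $\eta_u:=e^{\sigma B_u}-1$ and using the deterministic identity $1+r\int_0^s e^{ru}\,du=e^{rs}$, the numerator and denominator of (\ref{sl}) rearrange into the clean form
\begin{displaymath}
g^\sigma(s,\omega,1)-1=\frac{e^{rs}\eta_s-r\int_0^s e^{ru}\eta_u\,du}{1+r\int_0^s e^{ru+\sigma B_u}\,du}.
\end{displaymath}
Because $r>0$ the denominator is at least $1$, so bounding it below by $1$ and using $e^{ru}\le e^{rT}$ on $[0,T]$ yields $\mathbb{E}|g^\sigma(s,\omega,1)-1|\le e^{rT}\mathbb{E}|\eta_s|+r\int_0^T e^{ru}\mathbb{E}|\eta_u|\,du$. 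The argument then closes on the elementary moment identity $\mathbb{E}\eta_u^2=e^{2\sigma^2 u}-2e^{\sigma^2 u/2}+1$, which is $O(\sigma^2)$ uniformly on $[0,T]$, whence $\mathbb{E}|\eta_u|\le(\mathbb{E}\eta_u^2)^{1/2}\le C|\sigma|$. (Alternatively one could avoid the explicit formula and estimate $\mathbb{E}(g^\sigma-1)^2$ directly from the It\^o form (\ref{slogisticI}), using the a priori moment bound (\ref{elog}) to absorb the nonlinear terms, but the denominator argument is shorter.)

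Finally, (\ref{probc}) follows from (\ref{LP}) by Markov's inequality: for every $\delta>0$,
\begin{displaymath}
\sup_{y\in K}\mathbb{P}\{\|\Phi^\sigma(T,\omega,y)-\Psi(T,y)\|\ge\delta\}\le\frac{C|\sigma|}{\delta}\longrightarrow 0\qquad(\sigma\to 0).
\end{displaymath}
I expect the main obstacle to be the uniform control of $\Psi$ at the \emph{random} time $I(T)$: a large excursion of $I(T)$ could a priori drive $\Psi(I(T),y)$ far from $\Psi(T,y)$, and it is precisely the dissipativity-based global bound on $\Psi$ (together with the restriction to a finite horizon $T$, which turns the factors $e^{rs}$ into harmless constants) that tames this term. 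The decomposition formula is what makes both the amplitude and the time-change errors reducible to the single, easily handled scalar quantity $g^\sigma-1$.
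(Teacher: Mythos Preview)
Your proof is correct and follows the same overall architecture as the paper: use the decomposition formula with $g_0=1$, split into amplitude and time-reparametrization errors, bound the geometric factors via the dissipativity hypothesis (compactness of $M_0\supset K$), and reduce everything to the scalar estimate $\mathbb{E}|g^\sigma(s,\omega,1)-1|\le C_0|\sigma|$ on $[0,T]$.

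The genuine difference is in how the scalar estimate is obtained. The paper passes to the reciprocal $h^\sigma=1/g^\sigma$, derives its It\^o equation (which is linear with additive drift), bounds $\mathbb{E}(h^\sigma)^2$ by a comparison inequality, and then runs a Gronwall argument on $\mathbb{E}\sup_{s\le t}(h^\sigma_s-1)^2$; the final bound on $\mathbb{E}|g^\sigma-1|$ comes from writing $|g^\sigma-1|=|h^\sigma-1|\,g^\sigma$ and applying Cauchy--Schwarz together with the moment bound (\ref{elog}). Your route is more direct: you use the closed-form solution (\ref{sl}), exploit the algebraic identity $1+r\int_0^s e^{ru}\,du=e^{rs}$ to produce the exact numerator $e^{rs}\eta_s-r\int_0^s e^{ru}\eta_u\,du$, and then simply discard the denominator (which is $\ge 1$ since $r>0$). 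This reduces the problem to the elementary Gaussian moment $\mathbb{E}(e^{\sigma B_u}-1)^2=e^{2\sigma^2 u}-2e^{\sigma^2 u/2}+1=O(\sigma^2)$ on bounded intervals. Your argument is shorter and avoids the reciprocal trick, the Doob maximal inequality, and Gronwall entirely; the paper's approach, on the other hand, would survive if the explicit formula were unavailable (and as a by-product yields a uniform-in-time supremum bound rather than a pointwise one, though only the pointwise bound is needed here).
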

\begin{proof}
Without loss of generality, we may assume that $K\subset M_0$. Let $F(y)$ denote the vector field for the right hand of (\ref{dlv}). Since $M_0$ is compact, there is a constant $C_0$ such that $\|y\|+\|F(y)\|\leq C_0$ for all $y\in M_0$.

Utilizing the decomposition formula (\ref{sdfs}), we get that
\begin{displaymath}
\begin{array}{rl}
&\displaystyle\Phi^\sigma(t,\omega,y) -\Psi(t,y)\\
[4pt]
=&\big(g^\sigma(t,\omega,1)-1\big)\Psi(\int_0^tg^\sigma(s,\omega,1)ds,y) + \big(\Psi(\int_0^tg^\sigma(s,\omega,1)ds,y)-\Psi(t,y)\big)\\
[3pt]
=&\big(g^\sigma(t,\omega,1)-1\big)\Psi(\int_0^tg^\sigma(s,\omega,1)ds,y)\\ &+\int_0^1F\big(\Psi(\lambda\int_0^tg^\sigma(s,\omega,1)ds+(1-\lambda)t,y\big)d\lambda\int_0^t\big(g^\sigma(s,\omega,1)-1\big)ds.\\
\displaystyle
\end{array}
\end{displaymath}
Hence, for all $y\in M_0$, we have
\begin{equation}\label{esti1}
\mathbb{E}\|\Phi^\sigma(T,\omega,y) -\Psi(T,y)\|\leq C_0\big[\mathbb{E}|g^\sigma(T,\omega,1)-1|+\int_0^T \mathbb{E}|g^\sigma(t,\omega,1)-1|dt\big].
\end{equation}

By H\"{o}lder inequality, we have for any $t\in [0,T]$,
\begin{displaymath}
\begin{array}{rl}
&\displaystyle \mathbb{E}|g^\sigma(t,\omega,1)-1|\\
[4pt]
=&\mathbb{E}|\frac{1}{g^\sigma(t,\omega,1)}-1|g^\sigma(t,\omega,1)\\
[3pt]
\leq& \sqrt{\mathbb{E}|\frac{1}{g^\sigma(t,\omega,1)}-1|^2}\sqrt{\mathbb{E}|g^\sigma(t,\omega,1)|^2}.\\
\displaystyle
\end{array}
\end{displaymath}
From (\ref{elog}) it follows that
\begin{equation}\label{gL1}
\mathbb{E}|g^\sigma(t,\omega,1)-1|\leq (1+\frac{\sigma^2}{r})\sqrt{\mathbb{E}|\frac{1}{g^\sigma(t,\omega,1)}-1|^2}.
\end{equation}
 Let $h^\sigma(t,\omega,1):= \frac{1}{g^\sigma(t,\omega,1)}$. Then we need to estimate $\mathbb{E}|h^\sigma(t,\omega,1)-1|^2$.

 Using (\ref{slogisticI}) and the It\^{o} formula, we derive that
 \begin{equation}\label{heq}
dh^\sigma_{t} = [r+(\frac{\sigma^2}{2}-r)h^\sigma_{t}]dt-\sigma h^\sigma_{t} dB_t.
\end{equation}
Applying the It\^{o} formula to $(h^\sigma_{t})^2$, and then taking the expectation in the two sides, we obtain that
$$\mathbb{E}(h^\sigma_{t})^2= 1+2r\int_0^t\mathbb{E}h^\sigma_{s} ds+2(\sigma^2-r)\int_0^t\mathbb{E}(h^\sigma_{s})^2 ds,$$
which implies that
\begin{displaymath}
    \begin{array}{rl}
    \frac{\mathbb{E}(h^\sigma_{t})^2}{dt}=&2r\mathbb{E}h^\sigma_{t}+2(\sigma^2-r)\mathbb{E}(h^\sigma_{t})^2\\
    [2pt]
    \leq & 2\sqrt{\mathbb{E}(h^\sigma_{t})^2}\big[r-(r-\sigma^2)\sqrt{\mathbb{E}(h^\sigma_{t})^2}\big].
    \end{array}
\end{displaymath}
This shows that
\begin{equation}\label{hesti}
\mathbb{E}(h^\sigma_{t})^2 \leq (\frac{r}{r-\sigma^2})^2.
\end{equation}
It follows from (\ref{heq}) that
$$h^\sigma_{t}-1= r\int_0^t(1-h^\sigma_{s})ds+\frac{\sigma^2}{2}\int_0^th^\sigma_{s} ds-\sigma\int_0^th^\sigma_{s} dB_s.$$
Let $T>0$ and any $t\in[0,T]$. Then
\begin{displaymath}
\begin{array}{rl}
&\mathbb{E}[\displaystyle\sup_{0\leq s\leq t}(h^\sigma_{s}-1)^2]\\
[4pt]
\leq&3\big\{r^2T\int_0^t\mathbb{E}[\displaystyle\sup_{0\leq l\leq s}(h^\sigma_{l}-1)^2]ds+T\frac{\sigma^4}{4}\int_0^t\mathbb{E}(h^\sigma_{s})^2 ds +\sigma^2\mathbb{E}[\displaystyle\sup_{0\leq s\leq t}(\int_0^s h^\sigma_{l} dB_l)^2]\big\}\\
[3pt]
\leq& 3\big\{r^2T\int_0^t\mathbb{E}[\displaystyle\sup_{0\leq l\leq s}(h^\sigma_{l}-1)^2]ds+(T\frac{\sigma^4}{4}+4\sigma^2)\int_0^t\mathbb{E}(h^\sigma_{s})^2 ds \big\}\\
[3pt]
\leq& 3\big\{r^2T\int_0^t\mathbb{E}[\displaystyle\sup_{0\leq l\leq s}(h^\sigma_{l}-1)^2]ds+T(T\frac{\sigma^4}{4}+4\sigma^2)(\frac{r}{r-\sigma^2})^2 \big\}.
\end{array}
\end{displaymath}
Here in the second inequality, we have used Doob's maximal inequality (\cite[p.14]{KSH}) and the It\^{o} isometry
(\cite[p.137]{KSH}), and in the third inequality, we have applied (\ref{hesti}). The Grownwall inequality is applied here  so that we conclude that for all $t\in [0,T]$,
\begin{equation}\label{hL1}
\mathbb{E}[\displaystyle\sup_{0\leq s\leq t}(h^\sigma_{s}-1)^2]\leq 3T(T\frac{\sigma^4}{4}+4\sigma^2)(\frac{r}{r-\sigma^2})^2\exp(3r^2T^2).
\end{equation}
(\ref{LP}) follows from (\ref{esti1}), (\ref{gL1}), and (\ref{hL1}) immediately, and the Chebyshev inequality implies (\ref{probc}).
\end{proof}

 From the Khasminskii theorem (see \cite[p.65]{KHAS}), we know that any limiting measure $\nu_y^\sigma$ in weak sense for a subsequence of  probability measures
\begin{equation}\label{0pms}
P_T(y,\cdot):=\frac{1}{T}\int_0^{T}P(t,y,\cdot)dt,
\end{equation}
is a stationary measure for (\ref{sslv}) (see Theorem \ref{lsss}), where $y\in \mathbf{R}_+^n$.
Because what we are interested in is limit behavior for stationary measures as $\sigma \rightarrow 0$, we pay our attention to small $\sigma$. Hence, we restrict $0<|\sigma|\leq \sigma_0$ for $\sigma_0$ sufficiently small. Now denote by $\mathcal{M}_S(\sigma_0)$ all the stationary measures obtained in a manner just stated. The following proposition answers the tightness of this stationary measures set.
\begin{proposition}\label{tightp}
$\mathcal{M}_S(\sigma_0)$ is tight.
\end{proposition}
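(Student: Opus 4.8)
The plan is to produce, for each $\varepsilon>0$, a \emph{single} compact set $K\subset\mathbf{R}_+^n$ with $\nu(K)\ge 1-\varepsilon$ for every $\nu\in\mathcal{M}_S(\sigma_0)$, where $K$ depends only on $\varepsilon$, on the deterministic flow $\Psi$, and on $\sigma_0$. Each $\nu\in\mathcal{M}_S(\sigma_0)$ is, by construction (see Theorem \ref{lsss}), a weak limit of $P_{T_n}(y,\cdot)=\frac1{T_n}\int_0^{T_n}P(t,y,\cdot)\,dt$ along some $T_n\to\infty$, for a fixed $y\in\mathbf{R}_+^n$ and some $\sigma\in(0,\sigma_0]$. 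Taking $K=\{\,z\in\mathbf{R}_+^n:\|z\|\le R\,\}$, whose complement $\{\|z\|>R\}$ is open, and applying the open-set direction of the Portmanteau theorem, the whole statement reduces to a uniform tail bound
\[ \limsup_{T\to\infty}\frac1T\int_0^T P\big(t,y,\{\|z\|>R\}\big)\,dt\le \frac{C}{R^2}, \]
with $C$ independent of the choice of $y$ and of $\sigma\in(0,\sigma_0]$.

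The reduction to a scalar problem is the heart of the argument and rests on the decomposition formula (\ref{sdfs}): with $g_0=1$ it gives $\|\Phi^\sigma(t,\omega,y)\|=g^\sigma(t,\omega,1)\,\big\|\Psi\big(\int_0^t g^\sigma(s,\omega,1)\,ds,\,y\big)\big\|$, so every mechanism by which $\Phi^\sigma$ can escape to infinity is funnelled into the scalar prefactor $g^\sigma$, provided the time-changed deterministic state stays bounded. By dissipativity the latter does stay bounded eventually: since $M_0$ is a compact neighbourhood of $D$ into which the flow enters transversely (hence $M_0$ is positively invariant), there is a finite $\tau_0(y)$ with $\Psi(\tau,y)\in M_0$ and $\|\Psi(\tau,y)\|\le N:=\sup_{z\in M_0}\|z\|$ for all $\tau\ge\tau_0(y)$. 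Splitting according to whether the time change $\int_0^t g^\sigma(s,\omega,1)\,ds$ has reached $\tau_0(y)$ yields
\[ P\big(t,y,\{\|z\|>R\}\big)\le \mathbb{P}\Big(g^\sigma(t,\omega,1)>\frac{R}{N}\Big)+\mathbb{P}\Big(\int_0^t g^\sigma(s,\omega,1)\,ds<\tau_0(y)\Big). \]

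For the first term I would combine Chebyshev's inequality with the uniform second-moment bound (\ref{elog}), obtaining $\mathbb{P}(g^\sigma>R/N)\le N^2 R^{-2}\,\mathbb{E}(g^\sigma)^2\le N^2(1+\sigma_0^2/r)^2 R^{-2}$, a bound free of $t$ and of $\sigma\in(0,\sigma_0]$, so its time average obeys the same estimate; this is precisely where uniformity in the noise intensity comes from. The second, transient term is disposed of by the averaging itself: because $g^\sigma>0$ and $\frac1t\int_0^t g^\sigma(s,\omega,1)\,ds\to 1$ (Lemma \ref{lem8.3}), the increasing integral $\int_0^t g^\sigma\,ds$ diverges, so $\mathbb{P}\big(\int_0^t g^\sigma\,ds<\tau_0(y)\big)\to 0$ as $t\to\infty$ and hence its Ces\`aro mean over $[0,T]$ tends to $0$. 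Setting $C=N^2(1+\sigma_0^2/r)^2$ and choosing $R$ so large that $C/R^2<\varepsilon$ then gives $\nu(K)\ge 1-\varepsilon$ for all $\nu\in\mathcal{M}_S(\sigma_0)$.

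The step I expect to be the main obstacle --- and the one that forces the use of Ces\`aro limits rather than pathwise arguments --- is the unboundedness of the dissipation time $\tau_0(y)$ as $y$ ranges over $\mathbf{R}_+^n$. There is no single deterministic time after which \emph{all} relevant trajectories have entered $M_0$, so uniformity over the whole family $\mathcal{M}_S(\sigma_0)$ (which is built from arbitrarily distant initial data) cannot be read off directly; it is recovered only because each stationary measure is a time average, which renders the long-but-finite transient negligible in the limit. One should also check carefully that the estimate (\ref{elog}) holds uniformly as $\sigma\to 0$, so that $C$ genuinely does not degenerate on $(0,\sigma_0]$.
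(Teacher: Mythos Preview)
Your proof is correct and rests on the same three ingredients as the paper's: the decomposition formula (\ref{sdfs}), the uniform second-moment bound (\ref{elog}) on $g^\sigma$, and dissipativity to absorb the transient under Ces\`aro averaging. The organization differs in a way worth noting. The paper bounds the \emph{first moment} $\int\|x\|\,d\nu_y^\sigma$ uniformly: it uses $\mathbb{E}\|\Phi\|\le\tfrac12(\mathbb{E}g^2+\mathbb{E}\|\Psi(\int_0^t g\,ds,y)\|^2)$, and to control the second term it introduces the $y$-dependent bound $C_y=\sup_t\|\Psi(t,y)\|$ and a balancing trick --- choosing $T_y$ with $\mathbb{P}(\tau(\cdot,t_0)>T_y)<1/C_y^2$ so that on the bad event the product $C_y^2\cdot\mathbb{P}(\text{bad})$ is at most $1$ --- then passes the moment bound through weak convergence via truncation functions $f_N\uparrow\|\cdot\|$. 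Your route, bounding the tail $\nu(\{\|z\|>R\})$ directly via the inclusion $\{\|\Phi\|>R\}\subset\{g>R/N\}\cup\{\int_0^t g<\tau_0(y)\}$ and Portmanteau on the open complement, is cleaner: it avoids the AM--GM step, the $C_y$-balancing, and the truncation argument altogether. Both yield a constant of order $(1+\sigma_0^2/r)^2$ times a geometric factor depending only on $M_0$, uniform in $y$ and $\sigma\in(0,\sigma_0]$.
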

\begin{proof}
For any $N>0$ and $\nu_y^{\sigma}$ with $0<|\sigma|\leq \sigma_0$ and $y\in \mathbf{R}_+^n$, we have
$$\nu_y^{\sigma}(B_N^c)\leq \frac{1}{N}\int_{\mathbf{R}_+^n}\|x\| \nu_y^{\sigma}(dx). $$
In order to prove the tightness of $\mathcal{M}_S(\sigma_0)$, we only have to prove that there is a positive constant $C$, independent of $y$ and $\sigma$,  such that
\begin{equation}\label{bound}
\int_{\mathbf{R}_+^n}\|x\| \nu_y^{\sigma}(dx) \leq C,\ \ {\rm for \ \ any}\ \ \nu_y^{\sigma}\in \mathcal{M}_S(\sigma_0).
\end{equation}

 Now for any given $\nu_y^{\sigma}\in \mathcal{M}_S(\sigma_0)$, there is a time sequence $T_n\uparrow \infty$ such that
 \begin{equation}\label{weak}
 \frac{1}{T_n}\int_0^{T_n}P(t,y,\cdot)dt \stackrel{w}{\rightarrow} \nu_y^{\sigma}\ \ {\rm as}\ n\rightarrow \infty.
 \end{equation}
 Since $M_0$ is compact and $\Psi(t,y)$ is bounded, there are positive constants $C_0$  and $C_y$ such that $\|x\|\leq C_0$ for all $x\in M_0$ and $\|\Psi(t,y)\|\leq C_y$ for any $t\geq 0$. The dissipation assumption implies that there is a $t_0$ such that $\Psi(t,y)\in M_0$ for all $t \geq t_0$. By (\ref{stopping}), $\tau(\omega,t_0)$ is a stopping time satisfying (\ref{stopping1}) and $\tau(\omega,t_0) < \infty$. Therefore,
 \begin{equation}\label{stopping2}
 \lim_{T\rightarrow \infty}\mathbb{P}\big(\tau(\cdot,t_0)> T\big)=0,
 \end{equation}
 which implies that there is a $T=T_y$ such that
 \begin{equation}
 \mathbb{P}\big(\tau(\cdot,t_0)> T_y\big)< \frac{1}{C_y^2}.
 \end{equation}

For any $N>0$, define a continuous function $f_N\in C_b(\mathbf{R}_+^n)$:
\begin{displaymath}
f_N(x)=
\left\{
\begin{array}{ll}
\|x\|, \ & \|x\|\leq N;\\
\\
0, \ & \|x\|\geq N+1
\end{array}
\right.
\end{displaymath}
such that $f_N(x) \leq \|x\|$ for all $x\in \mathbf{R}_+^n$. By (\ref{weak}), we have that
\begin{displaymath}
    \begin{array}{rl}
     & \int_{\mathbf{R}_+^n}f_N(x) \nu_y^{\sigma}(dx)\\
     [2pt]
    = &\displaystyle\lim_{n\rightarrow \infty}\frac{1}{T_n}\int_0^{T_n} \mathbb{E}f_N\big(\Phi(t,\omega,y)\big)dt\\
    [2pt]
    = &\displaystyle\lim_{n\rightarrow \infty}\frac{1}{T_n}\int_{T_y}^{T_n} \mathbb{E}f_N\big(\Phi(t,\omega,y)\big)dt\\
    [3pt]
    \leq& \displaystyle\overline{\lim}_{n\rightarrow \infty}\frac{1}{T_n}\int_{T_y}^{T_n} \mathbb{E}\|\Phi(t,\omega,y)\|dt\\
    [3pt]
    \leq& \displaystyle\overline{\lim}_{n\rightarrow \infty}\frac{1}{2T_n}\int_{T_y}^{T_n} [\mathbb{E}g^2(t,\omega,1)+\mathbb{E}\|\Psi(\int_0^tg(s,\omega,1)ds,y)\|^2]dt\\
    [2pt]
    \leq& \frac{1}{2}\big(1+\frac{\sigma^2}{r}\big)^2+\displaystyle\overline{\lim}_{n\rightarrow \infty}\frac{1}{2T_n}\int_{T_y}^{T_n}\mathbb{E}\|\Psi(\int_0^tg(s,\omega,1)ds,y)\|^2dt\\
    [3pt]
    =& \frac{1}{2}\big(1+\frac{\sigma^2}{r}\big)^2+\displaystyle\overline{\lim}_{n\rightarrow \infty}\frac{1}{2T_n}\int_{T_y}^{T_n}\mathbb{E}\|I_{\{\tau(\omega,t_0)\leq T_y\}}(\omega)\Psi(\int_0^tg(s,\omega,1)ds,y)\|^2dt\\
    &+ \displaystyle\overline{\lim}_{n\rightarrow \infty}\frac{1}{2T_n}\int_{T_y}^{T_n}\mathbb{E}
    \|I_{\{\tau(\omega,t_0)>T_y\}}(\omega)\Psi(\int_0^tg(s,\omega,1)ds,y)\|^2dt\\
    [3pt]
    \leq& \Big(\frac{1}{2}\big(1+\frac{\sigma_0^2}{r}\big)^2+ \frac{C_0^2}{2}+\frac{1}{2}\Big).
    \end{array}
\end{displaymath}
Since $f_N(x)$ tends to $\|x\|$ as $N\rightarrow \infty$, we obtain (\ref{bound}) with $C=\frac{1}{2}\big((1+\frac{\sigma_0^2}{r})^2+ C_0^2+1\big)$ by letting $N\rightarrow \infty$ in the above inequality. This completes the proof.
\end{proof}
\begin{proposition}\label{invariance}
Let $\mu^i:= \nu_{y_0^i}^{\sigma^i}\in \mathcal{M}_S(\sigma_0),\ i= 1,2,\cdots$.
Assume that $\mu^i \stackrel{w}{\rightarrow}\mu$ as $\sigma^i\rightarrow 0$, $i\rightarrow \infty$.
Then $\mu$ is an invariant measure for $\Psi$, that is, $\mu \Psi^{-1}(T,\cdot)=\mu$ for any $T>0$.
\end{proposition}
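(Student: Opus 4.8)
The plan is to test both sides against a convergence-determining class and thereby convert the stochastic stationarity of each $\mu^i$ into deterministic invariance of the weak limit $\mu$ as the noise disappears. It suffices to prove that $\int_{\mathbf{R}_+^n} f\,d\mu = \int_{\mathbf{R}_+^n} f(\Psi(T,x))\,\mu(dx)$ for every bounded Lipschitz $f$, since such functions form a measure-determining class and the right-hand side is exactly $\int f\,d(\mu\Psi^{-1}(T,\cdot))$. The starting point is that each $\mu^i=\nu_{y_0^i}^{\sigma^i}$ is stationary for the Markov semigroup $\{P_t^{\sigma^i}\}$ generated by $(E_{\sigma^i})$ (this is precisely how the members of $\mathcal{M}_S(\sigma_0)$ were produced via the Khasminskii theorem in Theorem \ref{lsss}), so that for every bounded measurable $f$,
\begin{equation*}
\int_{\mathbf{R}_+^n} f\,d\mu^i = \int_{\mathbf{R}_+^n} P_T^{\sigma^i}f\,d\mu^i = \int_{\mathbf{R}_+^n}\mathbb{E}\big[f(\Phi^{\sigma^i}(T,\omega,x))\big]\,\mu^i(dx).
\end{equation*}

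First I would pass to the limit at the two ends of this identity. On the left, $\int f\,d\mu^i \to \int f\,d\mu$ by the assumed weak convergence $\mu^i \stackrel{w}{\rightarrow}\mu$. For the right I would insert the deterministic comparison $\int f(\Psi(T,\cdot))\,d\mu^i$ and split
\begin{equation*}
\int P_T^{\sigma^i}f\,d\mu^i - \int f(\Psi(T,\cdot))\,d\mu = \Big(\int \big(\mathbb{E}[f(\Phi^{\sigma^i}(T,\omega,\cdot))]-f(\Psi(T,\cdot))\big)\,d\mu^i\Big) + \Big(\int f(\Psi(T,\cdot))\,d\mu^i - \int f(\Psi(T,\cdot))\,d\mu\Big).
\end{equation*}
The second group tends to $0$ because $x\mapsto f(\Psi(T,x))$ is bounded and continuous (by continuous dependence of $\Psi$ on initial data) and $\mu^i\stackrel{w}{\rightarrow}\mu$. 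Once the first group is shown to vanish, the chain $\int f\,d\mu=\lim_i \int f\,d\mu^i=\lim_i \int P_T^{\sigma^i}f\,d\mu^i=\int f(\Psi(T,\cdot))\,d\mu$ closes and delivers the invariance.

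The main obstacle is the first group, where the zero-noise estimate must be integrated against measures whose supports need not lie in one fixed compact set. Here Proposition \ref{procon} yields only a local bound: with $L$ a Lipschitz constant for $f$, $|\mathbb{E}[f(\Phi^{\sigma^i}(T,\omega,x))]-f(\Psi(T,x))| \le L\,\mathbb{E}\|\Phi^{\sigma^i}(T,\omega,x)-\Psi(T,x)\| \le L\,C(K,T)|\sigma^i|$ only for $x$ in a fixed compact $K$. To control the tails uniformly in $i$ I would invoke the tightness of $\mathcal{M}_S(\sigma_0)$ established in Proposition \ref{tightp}: given $\varepsilon>0$, choose a compact $K$ with $\mu^i(K^c)<\varepsilon$ for all $i$, and split the first integral over $K$ and $K^c$. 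On $K$ the contribution is at most $L\,C(K,T)|\sigma^i|\,\mu^i(K)\to 0$ as $i\to\infty$, since $\sigma^i\to 0$ and $K$ is now fixed; on $K^c$ the integrand is bounded by $2\|f\|_\infty$, so its contribution is at most $2\|f\|_\infty\varepsilon$. Taking $\limsup_i$ and then letting $\varepsilon\to 0$ forces the first group to $0$. This establishes the desired identity for all bounded Lipschitz $f$, and hence $\mu\Psi^{-1}(T,\cdot)=\mu$ for every $T>0$.
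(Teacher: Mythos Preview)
Your proof is correct and follows essentially the same approach as the paper: use stationarity of $\mu^i$, invoke tightness (Proposition~\ref{tightp}) to localize to a compact $K$, apply Proposition~\ref{procon} on $K$, and pass to the limit using $\mu^i\stackrel{w}{\rightarrow}\mu$. The only cosmetic difference is that the paper tests against general $g\in C_b$ and uses the convergence-in-probability form of Proposition~\ref{procon} together with uniform continuity of $g$ on $\Psi(T,K)$, whereas you test against bounded Lipschitz $f$ and use the $L^1$ estimate directly---both implementations of the same idea.
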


\begin{proof}
Let $\mu^{i} \stackrel{w}{\rightarrow}
\mu $ as $i \rightarrow \infty$.
  It suffices to prove that for any nonzero $g\in C_b(\mathbf{R}_+^n)$ and $T>0$,
  \begin{equation}\label{wc1}
  \int g(y)\mu \circ\Psi_T^{-1}(dy)=\int g(y)\mu(dy),
  \end{equation}
  equivalently,
  $$\int g(\Psi(T,y))\mu(dy)=\int g(y)\mu(dy).$$
  $\{\mu^{i}\}$ is tight by Proposition \ref{tightp}.  For every $\eta>0$,  there exists a compact set
$K\subset \mathbf{R}_+^n$ such that $\displaystyle\inf_{i}\mu^{i}(K)\geq
1-\frac{\eta}{\|g\|}$.
\begin{displaymath}
    \begin{array}{rl}
     &|\int g(y)\mu^{i}\circ\Psi(T,\cdot)^{-1}(dy)-\int g(y)\mu^{i}(dy)|\\
     [2pt]
    =&|\int g(\Psi(T,y))\mu^{i}(dy)-\int \mathbb{E}g(\Phi^{\sigma^i}(T,\omega,y))\mu^{i}(dy)|\\
     [2pt]
    \leq &\int \mathbb{E}|g(\Psi(T,y))-g(\Phi^{\sigma^i}(T,\omega,y))|\mu^{i}(dy)\\
    [2pt]
    =&\int I_{K}(y)\mathbb{E}|g(\Psi(T,y))-g(\Phi^{\sigma^i}(T,\omega,y))|\mu^{i}(dy)\\
    &+\int I_{K^{c}}(y)\mathbb{E}|g(\Psi(T,y))-g(\Phi^{\sigma^i}(T,\omega,y))|\mu^{i}(dy)\\
     [1pt]
    \leq&\int\mathbb{E}|I_{K}(y)[g(\Psi(T,y))-g(\Phi^{\sigma^i}(T,\omega,y))]|\mu^{i}(dy)+2\eta.
    \end{array}
\end{displaymath}
  It is easy to see that $G:=\Psi(T, K)\subset \mathbf{R}_+^n$ is a compact set. Hence,  there is a $\delta>0$ such that $|g(y)-g(z)|<\eta$ whenever $y\in G, z\in \mathbf{R}_+^n$ with $\|y-z\|<\delta$.
Thus,  one can derive that
\begin{displaymath}
    \begin{array}{rl}
    &\int\mathbb{E}|I_{K}(y)[g(\Psi(T,y))-g(\Phi^{\sigma^i}(T,\omega,y))]|\mu^{i}(dy)\\
     [2pt]
     =&\int_K\mathbb{E}|I_{\{\|\Psi(T, y)-\Phi^{\sigma^i}(T, \omega, y)\|\geq\delta \}}(\omega)[g(\Psi(T,y))-g(\Phi^{\sigma^i}(T,\omega,y)))]|\mu^{i}(dy)\\
      [2pt]
    &+\int_K\mathbb{E}|I_{\{\|\Psi(T,y)-\Phi^{\sigma^i}(T, \omega, y)\|<\delta \}}(\omega)[g(\Psi(T,y))-g(\Phi^{\sigma^i}(T,\omega,y))]|\mu^{i}(dy)\\
     [2pt]
    \leq&2\|g\|\displaystyle\sup_{y\in K}
  \mathbb{P}(\displaystyle \|\Psi(T,y)-\Phi^{\sigma^i}(T, \omega, y)\|\geq\delta)+\eta\\
  <& 2\eta
    \end{array}
\end{displaymath}
for $i$ sufficiently large. Here we have used Proposition \ref{procon}. As a consequence, we have proved that for any $\eta>0$,
$$|\int g(y)\mu^{i}\circ\Psi(T,\cdot)^{-1}(dy)-\int g(y)\mu^{i}(dy)|<4\eta $$
for all sufficiently large $i$. Letting $i\rightarrow \infty$, we obtain that
$$|\int g(y)\mu \circ\Psi(T,\cdot)^{-1}(dy)-\int g(y)\mu (dy)|\leq 4\eta. $$
(\ref{wc1}) follows from $\eta$ being arbitrary. The proof is complete.
\end{proof}

By the Poincar\'{e} recurrence theorem (see, e.g., Ma\~{n}\'{e} \cite[Theorem 2.3, p. 29]{Ma}),
we can obtain the following consequence immediately.
\begin{proposition}\label{PRe}Assume that $\mu$ is an invariant probability measure of
the flow $\Psi$. Let ${\rm supp}(\mu)$ denote the support of $\mu$ and $B(\Psi)$ be
the {\rm Birkhoff}'s center of $\Psi$. Then the support of $\mu$ is contained
in the {\rm Birkhoff}'s center of $\Psi$, {\rm i.e.},
$$ {\rm supp}(\mu)\subset B(\Psi), $$
where $B(\Psi)=\overline{\{y\in\mathbf{R}_+^{n}:y\in\omega_F(y)\}}$.
\end{proposition}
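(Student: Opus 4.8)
The plan is to obtain the containment ${\rm supp}(\mu)\subset B(\Psi)$ directly from the measure-theoretic Poincar\'{e} recurrence theorem. The crux is to prove that the set of recurrent points
\[
R:=\{y\in\mathbf{R}_+^{n}:y\in\omega_F(y)\}
\]
has full $\mu$-measure; once $\mu(R)=1$ the rest is soft. Indeed $R\subset\overline{R}=B(\Psi)$, so establishing $\mu(R)=1$ will immediately exhibit $B(\Psi)$ as a closed set of full measure.

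To prove $\mu(R)=1$ I would pass to the time-one map $T:=\Psi(1,\cdot)$. Since $\mu$ is invariant for the flow $\Psi$ (so $\mu\,\Psi^{-1}(1,\cdot)=\mu$), the quadruple $(\mathbf{R}_+^{n},\mathcal{B}(\mathbf{R}_+^{n}),\mu,T)$ is a probability-preserving system on a Polish, hence second countable, space. The Poincar\'{e} recurrence theorem (\cite[Theorem 2.3, p.29]{Ma}) states that for every measurable set $A$ almost every point of $A$ returns to $A$ infinitely often. To upgrade this to genuine recurrence I would fix a countable base $\{U_i\}$ for the topology of $\mathbf{R}_+^{n}$; applying recurrence to each $U_i$ produces a $\mu$-null set $N_i$ off which points of $U_i$ return to $U_i$ infinitely often, and $N:=\bigcup_i N_i$ is again null. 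For any $y\notin N$ and any neighborhood $U$ of $y$, choosing $U_i$ with $y\in U_i\subset U$ shows $\Psi(n_k,y)\in U$ for infinitely many integers $n_k\to\infty$; hence $y\in\omega_F(y)$, i.e. $y\in R$. Thus $\mu(R)\ge\mu(\mathbf{R}_+^{n}\setminus N)=1$.

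The proof then closes by the standard support characterization: the complement of ${\rm supp}(\mu)$ is the largest open $\mu$-null set, so ${\rm supp}(\mu)$ is contained in every closed set of full measure. Applying this with the closed set $B(\Psi)$, for which $\mu(B(\Psi))\ge\mu(R)=1$, yields ${\rm supp}(\mu)\subset B(\Psi)$. I do not anticipate a genuine obstacle, as the argument is classical; the only delicate point is the passage from the ``returns to a fixed set'' form of Poincar\'{e} recurrence to pointwise recurrence $y\in\omega_F(y)$, and this is handled cleanly by the second countability of $\mathbf{R}_+^{n}$ together with the countable-base exhaustion described above.
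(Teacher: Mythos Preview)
Your proposal is correct and follows exactly the route the paper indicates: the paper does not give a detailed argument but simply states that the proposition follows immediately from the Poincar\'{e} recurrence theorem \cite[Theorem~2.3, p.~29]{Ma}. You have supplied the standard details---passing to the time-one map, using second countability to upgrade set-wise recurrence to pointwise recurrence $y\in\omega_F(y)$, and then invoking the characterization of the support---which is precisely how one makes this ``immediate'' consequence rigorous.
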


The main result in this section is summarized as follows.

\begin{theorem}\label{main} Let $\Psi$ be dissipative. Then $\mathcal{M}_S(\sigma_0)$ is tight.
If $\mu^i:= \nu_{y_0^i}^{\sigma^i}\in \mathcal{M}_S(\sigma_0),\ i= 1,2,\cdots$, satisfying
$\sigma^i\rightarrow 0$ as $i\rightarrow \infty$, and $\mu^i \stackrel{w}{\rightarrow}\mu$
as $i \rightarrow \infty$,  then $\mu$ is an invariant
measure of $\Psi$, whose support is contained in its Birkhoff's center.
\end{theorem}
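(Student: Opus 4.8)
The plan is to observe that Theorem \ref{main} is precisely the synthesis of the three propositions already established in this section, so the proof amounts to assembling them in the correct logical order. First I would invoke Proposition \ref{tightp}, which under the dissipation hypothesis yields the tightness of $\mathcal{M}_S(\sigma_0)$ directly; this settles the first assertion and simultaneously guarantees that the hypothesis $\mu^i \stackrel{w}{\rightarrow}\mu$ is non-degenerate, since by Prokhorov's theorem every sequence drawn from the tight family $\mathcal{M}_S(\sigma_0)$ has weakly convergent subsequences whose limits are genuine probability measures.

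Next, given the convergent sequence $\mu^i = \nu_{y_0^i}^{\sigma^i}$ with $\sigma^i \to 0$ and $\mu^i \stackrel{w}{\rightarrow}\mu$ as $i\to\infty$, I would apply Proposition \ref{invariance} verbatim to conclude that $\mu$ is invariant for the deterministic flow $\Psi$, that is, $\mu\circ\Psi^{-1}(T,\cdot)=\mu$ for every $T>0$. Finally, since $\mu$ is now known to be an invariant probability measure for $\Psi$, Proposition \ref{PRe}---a consequence of the Poincar\'{e} recurrence theorem---immediately gives ${\rm supp}(\mu)\subset B(\Psi)$, the Birkhoff center of $\Psi$. Chaining these three steps completes the proof.

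Because the substantive work has been relegated to the preceding propositions, the assembly itself is routine; the genuine obstacle lives inside Proposition \ref{invariance}. There the difficulty is to push the flow-invariance identity $\int g(\Psi(T,y))\,\mu(dy)=\int g(y)\,\mu(dy)$ through the double limit $\sigma^i\to 0$, $i\to\infty$. This is handled by splitting the integral over a compact set $K$ furnished by tightness, applying the uniform $L^1$ estimate $\sup_{y\in K}\mathbb{E}\|\Phi^{\sigma}(T,\omega,y)-\Psi(T,y)\|\leq C|\sigma|$ of Proposition \ref{procon} on $K$, and absorbing the contribution of $K^c$ into an arbitrarily small error via the tightness bound---so the stochastic-to-deterministic approximation and the tail control must be invoked together. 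If I were to prove Theorem \ref{main} from scratch rather than by citation, that uniform-in-$y$ estimate combined with the tail control would be the step demanding the most care.
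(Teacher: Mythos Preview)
Your proposal is correct and matches the paper's own proof, which consists of the single line ``Follows from Propositions \ref{procon}--\ref{PRe}.'' You have accurately identified that tightness comes from Proposition \ref{tightp}, invariance of the limit from Proposition \ref{invariance} (with Proposition \ref{procon} as its key input), and the Birkhoff-center containment from Proposition \ref{PRe}.
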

\begin{proof}Follows from Propositions \ref{procon}-\ref{PRe}.
\end{proof}

Before finishing this section, we will present applications to Stratonovich stochastic competitive differential equations:
\begin{equation}\label{ssclv}
   dy_i = y_i(r-\displaystyle\sum^n_{j=1}a_{ij}y_j)dt+\sigma y_i\circ dB_t,\ i=1,2,...,n,
 \end{equation}
whose corresponding system without noise is
\begin{equation}\label{dclv}
  \frac{dy_i}{dt} = y_i(r-\displaystyle\sum^n_{j=1}a_{ij}y_j),\
 i=1,2,...,n,
 \end{equation}
 where $r>0, a_{ij}>0,\ i,j=1,2,\cdots,n$.

\begin{theorem}{\rm(Hirsch \cite{H88})}\label{theorem:hirsch}
The system (\ref{dclv}) admits an invariant hypersurface $\Sigma$ (called carrying simplex),
 homeomorphic to the closed unit simplex $S_n=\{y\in \mathbf{R}^n_+: \sum_i y_i=1\}$ by radial projection, such that
 every trajectory in $\mathbf{R}^n_+ \setminus \{O\}$ is asymptotic to one in $\Sigma$. In particular, the system is dissipative, and $\Sigma\bigcup \{O\}$ is the fundamental attractor.
\end{theorem}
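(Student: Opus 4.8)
The statement is Hirsch's carrying simplex theorem specialized to the competitive Lotka--Volterra field $F_i(y)=y_i(r-\sum_{j}a_{ij}y_j)$, so the plan is to verify the structural and global hypotheses of Hirsch's theorem and then carry out the radial construction of $\Sigma$. First I would record the competitive structure: on $\mathbf{R}_+^n$ the off-diagonal Jacobian entries are $\partial F_i/\partial y_j=-a_{ij}y_i\le 0$, and strictly negative on ${\rm Int}\,\mathbf{R}_+^n$ since $a_{ij}>0$; moreover each coordinate hyperplane $\{y_i=0\}$ and the interior ${\rm Int}\,\mathbf{R}_+^n$ are invariant. Thus $(\ref{dclv})$ is a (strongly) competitive Kolmogorov system, and consequently its time-reversed flow is monotone for the order induced by $\mathbf{R}_+^n$. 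This monotonicity is the engine driving the whole argument.

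Next I would establish dissipativity and that $O$ is a repeller, the two global hypotheses. Since $a_{ij}>0$ and $y_j\ge 0$, each coordinate satisfies $\dot y_i\le y_i(r-a_{ii}y_i)$; comparison with the scalar logistic equation gives $\limsup_{t\to\infty}y_i(t)\le r/a_{ii}$, so the box $\prod_i[0,r/a_{ii}]$ is forward-absorbing and uniformly attracts every compact set of initial data, yielding a compact fundamental attractor. The linearization of $F$ at $O$ is $rI$, whose eigenvalues all equal $r>0$, so $O$ is a hyperbolic repeller: there is a neighborhood of $O$ that every nonzero orbit eventually leaves and never re-enters. These are precisely the conditions under which Hirsch's theorem produces the carrying simplex.

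The core step is the construction of $\Sigma$ itself. Using the monotonicity of the backward flow together with the repulsion of $O$ and dissipativity, I would show that along each ray $L_v=\{sv:s>0\}$ with $v\in S_n$ there is a unique value $m(v)>0$ separating those initial points whose backward orbit converges to $O$ from those whose backward orbit leaves the absorbing box, and set $\Sigma:=\{m(v)v:v\in S_n\}$. The key monotonicity lemma shows that $\Sigma$ is \emph{unordered} --- no two distinct points of $\Sigma$ are comparable in the order induced by $\mathbf{R}_+^n$ --- and that it is invariant; by the standard fact that a compact, unordered, invariant set for a competitive system is a Lipschitz graph over $S_n$ under radial projection, this gives the homeomorphism $\Sigma\cong S_n$. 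A final squeezing argument between ordered sub- and super-solutions, using that the complement of the backward basin of $O$ is exactly the forward-asymptotic set of $\Sigma$, shows ${\rm dist}(\Psi(t,y),\Sigma)\to 0$ for every $y\neq O$; hence $\Sigma$ is the carrying simplex and $\Sigma\cup\{O\}$ is the fundamental attractor.

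The main obstacle is this last construction: proving that $m(v)$ is well defined, single-valued and continuous, and that $\Sigma$ is unordered and globally attracting. Everything here rests on the strong monotonicity of the time-reversed flow and the comparison principle for competitive systems --- this is Hirsch's essential contribution --- whereas the a priori bound, dissipativity, and the repeller property are routine Lotka--Volterra estimates. Since the assertion is exactly \cite{H88}, in the present paper it suffices to verify the competitive structure, dissipativity, and repelling origin established above and then invoke that reference.
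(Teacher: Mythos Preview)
The paper does not prove this theorem at all: it is stated with attribution to Hirsch \cite{H88} and used as a black box. Your final paragraph is therefore exactly right --- in the context of this paper one only needs to check that (\ref{dclv}) is strongly competitive with a repelling origin and is dissipative, then invoke the reference; your verification of these hypotheses is correct, and your additional sketch of Hirsch's radial construction of $\Sigma$ goes well beyond what the paper itself supplies.
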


Combining the stochastic decomposition formula and Hirsch's carrying simplex theorem, we immediately obtain the following.

\begin{corollary}[Stochastic Carrying Simplex]
Stochastic competitive LV system (\ref{ssclv}) possesses a stochastic carrying simplex $\Sigma(\omega):=u(\omega)\Sigma$, which is invariant for pull-back flow $\Phi(t,\theta_{-t}\omega,y)$ and attracts any nontrivial pull-back trajectory.
\end{corollary}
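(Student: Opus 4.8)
The plan is to combine Hirsch's carrying simplex theorem (Theorem~\ref{theorem:hirsch}) with the pull-back form of the Stratonovich decomposition formula (Corollary~\ref{usdf}) and the asymptotics of the scalar Logistic equation (Lemmas~\ref{lcon} and~\ref{lem8.3}). Throughout I would fix the base point $g_0=1$ in the decomposition (\ref{sdfs}), so that
$$\Phi(t,\theta_{-t}\omega,y)=g(t,\theta_{-t}\omega,1)\,\Psi\Big(\int_0^t g(s,\theta_{-t}\omega,1)\,ds,\ y\Big),$$
and recall from Theorem~\ref{theorem:hirsch} that the carrying simplex $\Sigma$ is an invariant hypersurface for $\Psi$, i.e.\ $\Psi(\tau,\Sigma)=\Sigma$ for every $\tau$, while every trajectory of (\ref{dclv}) starting at $y\neq O$ is bounded and satisfies $\mathrm{dist}(\Psi(\tau,y),\Sigma)\to 0$ as $\tau\to\infty$.

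For the invariance of $\Sigma(\omega)=u(\omega)\Sigma$ I would argue directly from Corollary~\ref{usdf}. Given $y\in\Sigma$, the identity $\Phi(t,\omega,u(\omega)y)=u(\theta_t\omega)\Psi(\int_0^t u(\theta_s\omega)\,ds,y)$ places the image inside $u(\theta_t\omega)\Sigma=\Sigma(\theta_t\omega)$, since $\Psi$ leaves $\Sigma$ invariant; running this with $\omega$ replaced by $\theta_{-t}\omega$ gives $\Phi(t,\theta_{-t}\omega,\Sigma(\theta_{-t}\omega))\subset\Sigma(\omega)$. Equality then follows from surjectivity: the map $\Psi(\int_0^t u(\theta_s\omega)\,ds,\cdot)$ is a homeomorphism of $\Sigma$ onto itself and multiplication by the positive scalar $u(\theta_t\omega)$ is a bijection. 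Alternatively, the Cone Invariance Theorem~\ref{cinav} shows $\Lambda(\Sigma)$ is invariant, which localizes the argument to the cone over $\Sigma$.

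For the attraction statement, fix $\omega$ and $y\neq O$ and set $z_t:=\Psi(\int_0^t g(s,\theta_{-t}\omega,1)\,ds,\ y)$. By Lemma~\ref{lem8.3} the time average $\frac1t\int_0^t g(s,\theta_{-t}\omega,1)\,ds\to 1$, so the random upper limit $\int_0^t g(s,\theta_{-t}\omega,1)\,ds\to+\infty$; since $y\neq O$, Theorem~\ref{theorem:hirsch} furnishes a curve $w_t\in\Sigma$ with $\|z_t-w_t\|\to 0$, and dissipativity keeps $\{z_t\}$ bounded. Writing
$$\Phi(t,\theta_{-t}\omega,y)-u(\omega)w_t=\big(g(t,\theta_{-t}\omega,1)-u(\omega)\big)z_t+u(\omega)\big(z_t-w_t\big),$$
the first term tends to zero because $g(t,\theta_{-t}\omega,1)\to u(\omega)$ by Lemma~\ref{lcon} while $z_t$ is bounded, and the second tends to zero by the choice of $w_t$. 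Since $u(\omega)w_t\in u(\omega)\Sigma=\Sigma(\omega)$, this yields $\mathrm{dist}(\Phi(t,\theta_{-t}\omega,y),\Sigma(\omega))\to 0$, which is exactly the asserted pull-back attraction.

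The main obstacle I anticipate is the attraction step rather than the invariance step: one must transport the purely deterministic convergence $\mathrm{dist}(\Psi(\tau,y),\Sigma)\to 0$ through the random, $t$-dependent time substitution $\tau=\int_0^t g(s,\theta_{-t}\omega,1)\,ds$. The two facts that make this work are that this substitution drives $\tau\to\infty$ for a.e.\ $\omega$ (Lemma~\ref{lem8.3}) and that the scalar prefactor converges to $u(\omega)$ along the pull-back (Lemma~\ref{lcon}); one should also verify that the approximating points $w_t$ remain in the compact set $\Sigma$, so that multiplication by $u(\omega)$ preserves the error estimate. Everything else is a routine combination of the decomposition formula with Theorem~\ref{theorem:hirsch}.
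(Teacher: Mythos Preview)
Your proposal is correct and matches the paper's intended approach: the paper states the corollary as an immediate consequence of combining the stochastic decomposition formula with Hirsch's carrying simplex theorem, and your argument fills in precisely those details, following the same template as the proof of Theorem~\ref{long-run} (split into the scalar prefactor converging via Lemma~\ref{lcon} and the deterministic part converging to $\Sigma$ via Theorem~\ref{theorem:hirsch} after the random time substitution diverges by Lemma~\ref{lem8.3}).
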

\begin{theorem}\label{produce}
$\mathcal{M}_S(\sigma_0)$ is produced by all solutions from $\Sigma \cup \{O\}$.
\end{theorem}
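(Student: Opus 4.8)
The plan is to read the statement as the set identity
$$\mathcal{M}_S(\sigma_0)=\big\{\nu_{y}^{\sigma}:\ y\in \Sigma\cup\{O\},\ 0<|\sigma|\leq \sigma_0\big\},$$
where $\nu_{y}^{\sigma}$ denotes any weak limit of $\frac{1}{T}\int_0^{T}P(t,y,\cdot)\,dt$. The inclusion ``$\supseteq$'' is immediate because $\Sigma\cup\{O\}\subset \mathbf{R}_+^n$, so the content lies in the reverse inclusion: every $\nu_{y}^{\sigma}$ with $y\in\mathbf{R}_+^n$ must be reproduced by some initial point lying on the carrying simplex or at the origin. For $y=O$ the solution stays at $O$, whence $\nu_{O}^{\sigma}=\delta_O$, produced by $O\in\Sigma\cup\{O\}$. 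For $y\neq O$, Hirsch's Theorem \ref{theorem:hirsch} furnishes a point $\bar{y}\in\Sigma$ whose deterministic trajectory is asymptotic to that of $y$, that is, $\|\Psi(s,y)-\Psi(s,\bar{y})\|\to 0$ as $s\to\infty$. I would then reduce the whole theorem to the single claim that asymptotic deterministic trajectories generate identical stationary measures, namely $\nu_{y}^{\sigma}=\nu_{\bar{y}}^{\sigma}$ along every subsequence realizing $\nu_{y}^{\sigma}$.

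The key step is to compare the transition probabilities of $y$ and $\bar{y}$ through the decomposition formula (\ref{sdfs}) with $g_0=1$. Writing $\tau_t(\omega):=\int_0^t g(s,\omega,1)\,ds$, one has $\Phi(t,\omega,y)=g(t,\omega,1)\Psi(\tau_t(\omega),y)$ and likewise for $\bar{y}$, so for any bounded Lipschitz $f$ with constant $L$,
$$\big|\mathbb{E}f(\Phi(t,\omega,y))-\mathbb{E}f(\Phi(t,\omega,\bar{y}))\big|\leq L\,\mathbb{E}\big[g(t,\omega,1)\,\|\Psi(\tau_t,y)-\Psi(\tau_t,\bar{y})\|\big].$$
By Lemma \ref{lem8.3}, relation (\ref{sys44.1}) gives $\tau_t(\omega)\to\infty$ a.s., and since the deterministic system is dissipative both $\Psi(\tau_t,y)$ and $\Psi(\tau_t,\bar{y})$ stay uniformly bounded while their difference tends to zero a.s. Applying Cauchy--Schwarz together with the uniform $L^2$-bound (\ref{elog}), namely $\mathbb{E}g^2(t,\omega,1)\leq(1+\sigma^2/r)^2$, and dominated convergence to the bounded, a.s.\ vanishing second factor yields $\mathbb{E}f(\Phi(t,\omega,y))-\mathbb{E}f(\Phi(t,\omega,\bar{y}))\to 0$ as $t\to\infty$.

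Since a function tending to zero has vanishing Ces\`aro average, the difference $\frac{1}{T}\int_0^{T}[\mathbb{E}f(\Phi(t,\cdot,y))-\mathbb{E}f(\Phi(t,\cdot,\bar{y}))]\,dt\to 0$. Hence along any subsequence $T_n\uparrow\infty$ realizing $\frac{1}{T_n}\int_0^{T_n}P(t,y,\cdot)\,dt\stackrel{w}{\to}\nu_{y}^{\sigma}$, the time averages for $\bar{y}$ converge to the same limit on every bounded Lipschitz test function; as such functions determine weak convergence, $\frac{1}{T_n}\int_0^{T_n}P(t,\bar{y},\cdot)\,dt\stackrel{w}{\to}\nu_{y}^{\sigma}$ as well. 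Thus $\nu_{y}^{\sigma}$ is produced by $\bar{y}\in\Sigma$, which establishes the reverse inclusion and hence the theorem.

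The main obstacle is the comparison estimate in the second paragraph: one must control the product of the diffusion factor $g(t,\omega,1)$, which is only bounded in $L^2$ and does not itself converge, against the vanishing but randomly time-changed deterministic difference $\|\Psi(\tau_t,y)-\Psi(\tau_t,\bar{y})\|$. The decoupling via Cauchy--Schwarz is precisely what makes the uniform bound (\ref{elog}) and the a.s.\ divergence $\tau_t\to\infty$ interact correctly; the delicate point is verifying that dominated convergence genuinely applies, which hinges on the uniform boundedness of the deterministic difference evaluated along the random clock $\tau_t$, guaranteed by dissipativity, together with the harmless passage from pointwise-in-$t$ decay to decay of the time average along the prescribed subsequence.
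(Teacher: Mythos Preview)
Your argument is correct and in fact somewhat cleaner than the paper's. Both proofs share the same skeleton: given $y\neq O$, invoke Hirsch's Theorem~\ref{theorem:hirsch} to obtain $\bar y\in\Sigma$ with $\|\Psi(s,y)-\Psi(s,\bar y)\|\to 0$, then use the decomposition formula with $g_0=1$ to compare $\Phi(t,\omega,y)$ and $\Phi(t,\omega,\bar y)$, splitting off the factor $g(t,\omega,1)$ via Cauchy--Schwarz and the $L^2$-bound (\ref{elog}). The difference lies in how the residual factor is handled and which test functions are used. The paper tests against the exponentials $e^{-\langle m,x\rangle}$, introduces the stopping time $\tau(\omega,t_0)$ of (\ref{stopping}) to control when the random clock $\int_0^t g\,ds$ has surpassed a deterministic threshold, splits the expectation on $\{\tau\leq T_0\}$ versus $\{\tau>T_0\}$, and then passes to general $f\in C_c$ by Stone--Weierstrass. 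You instead test against bounded Lipschitz functions, observe that $\|\Psi(\tau_t,y)-\Psi(\tau_t,\bar y)\|$ is uniformly bounded and a.s.\ tends to zero (since $\tau_t\to\infty$ a.s.\ by (\ref{sys44.1})), and apply dominated convergence directly; the conclusion then follows because bounded Lipschitz functions determine weak convergence. Your route avoids both the stopping-time bookkeeping and the density argument, at the modest cost of appealing to the Portmanteau-type fact that convergence on bounded Lipschitz tests suffices. One incidental gain of your approach is that you show directly that the $\bar y$-averages converge along the \emph{same} subsequence $T_n$, whereas the paper first passes to a further subsequence so that the $z$-averages converge to some $\nu_z^\sigma$ before identifying $\nu_y^\sigma=\nu_z^\sigma$.
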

\begin{proof}
For any given $\nu_{y}^{\sigma}$ with $y\neq O$, there is a time sequence $T_n\uparrow \infty$ such that (\ref{weak}) holds. By Theorem \ref{theorem:hirsch}, there is a $z\in \Sigma$ such that the solutions $\Psi(t,y)$ and $\Psi(t,z)$ are asymptotic, that is, for any $\epsilon > 0$, there is a $t_0$ such that as $t\geq t_0$
\begin{equation}\label{asym}
\|\Psi(t,y)-\Psi(t,z)\|<\epsilon.
\end{equation}
Without loss of generality, we may assume that
$$ \frac{1}{T_n}\int_0^{T_n}P(t,z,\cdot)dt \stackrel{w}{\rightarrow} \nu_z^{\sigma}\ \ {\rm as}\ n\rightarrow \infty.$$
We claim that $\nu_{y}^{\sigma}=\nu_z^{\sigma}$. It suffices to prove that
for arbitrary $f\in
C_c(\mathbf{R}_+^n)$,
\begin{equation}\label{measureeq}
\int_{\mathbf{R}_+^n}f(x)\nu^\sigma_y(dx)=\int_{\mathbf{R}_+^n}f(x)\nu^\sigma_z(dx).
\end{equation}

Firstly, we will show that (\ref{measureeq}) holds for $f(x)= {\exp}\{-(m_1x_1+m_2x_2+\cdots+m_nx_n)\}$
with any nonnegative integers $m_1, m_2,\cdots,m_n$. Obviously, there is a constant $B$ such that $\|\nabla f(x)\|\leq B$.

Using (\ref{stopping2}), it follows that for any $\epsilon >0$, there is a $T_0=T_0(\epsilon)$ such that
\begin{equation}\label{asym}
\mathbb{P}\big(\tau(\cdot,t_0)> T_0\big)<\epsilon.
\end{equation}
\begin{displaymath}
    \begin{array}{rl}
     & |\int_{\mathbf{R}_+^n}f(x) \nu_y^{\sigma}(dx)-\int_{\mathbf{R}_+^n}f(x) \nu_z^{\sigma}(dx)|\\
     [2pt]
    = &\displaystyle\lim_{n\rightarrow \infty}\big |\frac{1}{T_n}\int_{T_0}^{T_n} \big[\mathbb{E}f \big(\Phi(t,\omega,y)\big)-\mathbb{E}f \big(\Phi(t,\omega,z)\big)\big]dt\big |\\
    \leq& \displaystyle\overline{\lim}_{n\rightarrow \infty}\frac{B}{T_n}\int_{T_0}^{T_n} \mathbb{E}|g(t,\omega,1)|\|\Psi(\int_0^tg(s,\omega,1)ds,y)-\Psi(\int_0^tg(s,\omega,1)ds,z)\|dt\\
    [3pt]
    \leq& B\big(1+\frac{\sigma^2}{r}\big)\displaystyle\overline{\lim}_{n\rightarrow \infty}\frac{1}{T_n}\int_{T_0}^{T_n}\Big(\mathbb{E}\| \Psi(\int_0^tg(s,\omega,1)ds,y)-\Psi(\int_0^tg(s,\omega,1)ds,z)\|^2\Big)^{\frac{1}{2}}dt\\
    [2pt]
    =& B\big(1+\frac{\sigma^2}{r}\big)\Big[\displaystyle\overline{\lim}_{n\rightarrow \infty}\frac{1}{T_n}\int_{T_0}^{T_n}\Big(\mathbb{E}I_{\{\tau\leq T_0\}}\| \Psi(\int_0^tg(s,\omega,1)ds,y)-\Psi(\int_0^tg(s,\omega,1)ds,z)\|^2\Big)^{\frac{1}{2}}dt\\
    +& \displaystyle\overline{\lim}_{n\rightarrow \infty}\frac{1}{T_n}\int_{T_0}^{T_n}\Big(\mathbb{E}I_{\{\tau > T_0\}}\| \Psi(\int_0^tg(s,\omega,1)ds,y)-\Psi(\int_0^tg(s,\omega,1)ds,z)\|^2\Big)^{\frac{1}{2}}dt\Big]\\
    \leq & B\big(1+\frac{\sigma^2}{r}\big)(1+B_{yz})\epsilon
    \end{array}
\end{displaymath}
where $B_{yz}$ is a constant, depending on the bounds for the trajectories $\Psi(t,y)$ and $\Psi(t,z)$. Since $\epsilon$ is arbitrary, (\ref{measureeq}) holds, hence it still holds for linear combination for
these exponent functions. (\ref{measureeq}) follows from the Stone-Weierstrass Theorem immediately.
\end{proof}

\begin{corollary}\label{mainc}
$\mathcal{M}_S(\sigma_0)$ is tight. Let $\mu^i:= \nu_{y_0^i}^{\sigma^i}\in \mathcal{M}_S(\sigma_0)$ with
$y_0^i\neq O$, $i= 1,2,\cdots$. Assume that $\mu^i \stackrel{w}{\rightarrow}\mu$ as
$\sigma^i\rightarrow 0$, $i\rightarrow \infty$.
Then $\mu$ is an invariant
measure of $\Psi$, whose support is contained in its Birkhoff's center.
Moreover, $\mu(\Sigma)=1$.
\end{corollary}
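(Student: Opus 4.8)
The tightness of $\mathcal{M}_S(\sigma_0)$, the invariance of $\mu$ under $\Psi$, and the inclusion ${\rm supp}(\mu)\subset B(\Psi)$ are all immediate from Theorem \ref{main}, because Hirsch's Theorem \ref{theorem:hirsch} guarantees that the competitive system (\ref{dclv}) is dissipative. Thus the only genuinely new assertion is $\mu(\Sigma)=1$, and the plan is to split this into two parts: first I would show that the Birkhoff center satisfies $B(\Psi)\subset \Sigma\cup\{O\}$, which already forces $\mu(\Sigma\cup\{O\})=1$; then I would show $\mu(\{O\})=0$, so that all the mass must sit on $\Sigma$.

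For the inclusion $B(\Psi)\subset \Sigma\cup\{O\}$, I would argue directly from the asymptotic property in Theorem \ref{theorem:hirsch}. If $y\neq O$, then the trajectory $\Psi(\cdot,y)$ is asymptotic to one in the invariant closed set $\Sigma$, so $\omega_F(y)\subset\Sigma$; consequently any recurrent point $y\in\omega_F(y)$ with $y\neq O$ must lie in $\Sigma$, while $O$ is itself an equilibrium and thus trivially recurrent. Since $\Sigma\cup\{O\}$ is closed, taking the closure of the recurrent set gives $B(\Psi)=\overline{\{y:y\in\omega_F(y)\}}\subset\Sigma\cup\{O\}$. Combined with ${\rm supp}(\mu)\subset B(\Psi)$ from Theorem \ref{main}, this yields $\mu(\Sigma\cup\{O\})=1$.

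To establish $\mu(\{O\})=0$, the essential point is a lower bound on $\|\Psi(t,\cdot)\|$ that is uniform in $i$. Here Theorem \ref{produce} does the decisive work: each $\mu^i=\nu_{y_0^i}^{\sigma^i}$ with $y_0^i\neq O$ coincides with $\nu_{z_i}^{\sigma^i}$ for some $z_i\in\Sigma$, and since $\Sigma$ is invariant and bounded away from the origin, $\|\Psi(t,z_i)\|\geq k_0:={\rm dist}(O,\Sigma)>0$ for all $t>0$. Re-running the estimate in the proof of Theorem \ref{lsss} with this single constant $k_0$ in place of the trajectory-dependent $k$ gives, for every ball $B_R$, the uniform bound $\mu^i(B_R)=\nu_{z_i}^{\sigma^i}(B_R)\leq\int_0^{R/k_0}p^{\sigma^i}(s)\,ds$. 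Fixing $R<k_0$ and recalling that $p^{\sigma}$ from (\ref{densityL}) is the Gamma density with mean $1$ and variance $\sigma^2/(2r)$, which concentrates at $1$ as $\sigma\to 0$ (consistent with Theorem \ref{wce} (i)), the right-hand side tends to $0$. Since $B_R$ is open, the Portmanteau theorem gives $\mu(B_R)\leq\liminf_i\mu^i(B_R)=0$, whence $\mu(\{O\})=0$ and therefore $\mu(\Sigma)=1$.

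I expect the main obstacle to be precisely the uniformity of the lower bound $k_0$: the initial points $y_0^i$ are a priori unconstrained and could approach $O$, in which case the naive lower bound from Theorem \ref{lsss} degenerates to $0$ and the argument collapses. Replacing $y_0^i$ by its carrying-simplex representative $z_i$ via Theorem \ref{produce} is exactly what restores a single $\sigma$-independent and $i$-independent constant $k_0={\rm dist}(O,\Sigma)$, after which the Gamma concentration and the Portmanteau step are routine.
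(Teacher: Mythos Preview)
Your proposal is correct and follows essentially the same route as the paper: invoke Theorem~\ref{main} for tightness, invariance, and ${\rm supp}(\mu)\subset B(\Psi)$; then use Theorem~\ref{produce} to replace each $y_0^i$ by a representative $z_i\in\Sigma$, exploit the uniform lower bound ${\rm dist}(O,\Sigma)>0$ together with the estimate (\ref{mob}), and conclude $\mu(\{O\})=0$ via the Portmanteau theorem and the concentration of $p^\sigma$ at $1$. Your write-up is actually slightly more complete than the paper's, since you make explicit the step $B(\Psi)\subset\Sigma\cup\{O\}$ that the paper leaves implicit when it says ``others follow from Theorem~\ref{main}.''
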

\begin{proof}It is only necessary to show that $\mu(\{O\})=0$, others follow from Theorem \ref{main}.
For every $y_0^i=(y_{0,1}^i,\cdots,y_{0,n}^i)\neq O$, from Theorem \ref{produce}, there exists $z_0^i\in\Sigma$
such that $\mu^i= \nu_{y_0^i}^{\sigma^i}=\nu_{z_0^i}^{\sigma^i}$. Since $\Sigma$ is
invariance, there is a constant $k>0$ such that
$$ \displaystyle\sup_{i}\|\Psi(t,z_0^i)\|\geq k,\ {\rm for\ all}\ t > 0.   $$
Let $R<k$ in (\ref{mob}). Then we utilize the Portmanteau theorem to get that
$$\mu(B_R)\leq \liminf_{i\rightarrow \infty}\mu^i(B_R)\leq
\lim_{\sigma^i \rightarrow 0}\int_0^{\frac{R}{k}}p^{\sigma^{i}}(s)ds =\delta_1([0,\frac{R}{k}])=0,$$
where $\delta_1(\cdot)$ is the Dirac measure at point $\{1\}$ on $\mathbf{R}_+$, and
the second inequality has used (\ref{mob}).
This implies that $\mu(\{O\})=0$.
\end{proof}
\begin{remark} The notion of carrying simplex is just the manifold to carry out turbulence by
Busse et al. \cite{BusseExample,Busse1980science,Busse1980nonlinear}.
\end{remark}
\section{The Complete Classification for 3-Dim Stochastic Competitive LV System}
This section focuses on  three dimensional Stratonovich stochastic competitive LV equations:

\begin{equation}\label{3DSLV}
    \begin{array}{l}
        \displaystyle dy_1=y_1(r-a_{11}y_1-a_{12} y_{2}-a_{13} y_{3})dt+\sigma y_1\circ dB_t,\\
        \noalign{\medskip}
        \displaystyle dy_2=y_2(r-a_{21} y_1-a_{22}y_{2}-a_{23} y_{3})dt+\sigma y_2\circ dB_t,\\
        \noalign{\medskip}
        \displaystyle d y_3=y_3(r-a_{31} y_1-a_{32} y_{2}-a_{33}y_{3})dt+\sigma y_3\circ dB_t.
    \end{array}
\end{equation}
Here $r>0, a_{ij}>0,\ i,j=1,2,3$. We will classify the long-run behavior of stochastic system (\ref{3DSLV}) both in pull-back trajectory  and in stationary measure. To achieve this goal, we have to introduce the classification results for the corresponding deterministic three dimensional competitive  LV equations:
\begin{equation}\label{3DLV}
    \begin{array}{l}
        \displaystyle \frac{dy_1}{dt}=y_1(r-a_{11}y_1-a_{12} y_{2}-a_{13} y_{3}),\\
        \noalign{\medskip}
        \displaystyle \frac{dy_2}{dt}=y_2(r-a_{21} y_1-a_{22}y_{2}-a_{23} y_{3}),\\
        \noalign{\medskip}
        \displaystyle \frac{dy_3}{dt}=y_3(r-a_{31} y_1-a_{32} y_{2}-a_{33}y_{3}),
    \end{array}
\end{equation}
which are given in \cite{JN}.
\subsection{Review  Classification for 3-Dim Deterministic Competitive  LV System}
Zeeman \cite{Z993} classified the stable nullcline classes for general three dimensional competitive LV equations, which permit different intrinsic growth rates. The stable nullcline class means that their boundary equilibria are hyperbolic and have the same local dynamics on $\Sigma$ after a permutation of the indices $\{1,2,3\}.$ She got that
general three dimensional competitive LV equations admit in total $33$ stable nullcline classes. Nevertheless, among the same stable nullcline class, two systems may have different dynamics, global dynamics is unknown for six stable nullcline classes. However,  in the case of the identical intrinsic growth rate, global dynamics for all stable nullcline classes can be classified in the competitive parameters $a_{ij}$, as done in \cite{JN}.

\begin{theorem}{\rm (\cite[Theorem 4.12]{JN})}\label{thm_class}
There are exactly $37$ dynamical classes in $33$ stable nullcline classes for system (\ref{3DLV}). Each class is given by inequalities in competitive coefficients permitting permutation of indices, all trajectories tend to equilibria for classes $1$-$25$, $26$ {\rm a)}, $26$ {\rm c)}, $27$ {\rm a)} and $28$-$33$, a center on $\Sigma$ only occurs in $26$ {\rm b)} and $27$ {\rm b)}, and the heteroclinic cycle attracts all orbits except $L(P)$ in class $27$ {\rm c)}. All are depicted on $\Sigma$ and  presented in Table {\rm \ref{biao0}} in Appendix A.
\end{theorem}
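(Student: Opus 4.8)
The plan is to reduce the three-dimensional problem to a planar one and then exhaust the finitely many possible phase portraits. By Hirsch's carrying simplex theorem (Theorem \ref{theorem:hirsch}), every nontrivial trajectory of (\ref{3DLV}) is asymptotic to the invariant $2$-dimensional simplex $\Sigma$, so the entire long-run behavior is governed by the flow induced on $\Sigma$. Since $\Sigma$ is homeomorphic to the closed $2$-simplex and invariant, the Poincar\'e--Bendixson theorem applies to the restricted flow: every omega-limit set is an equilibrium, a periodic orbit, or a cycle of equilibria joined by connecting orbits. The first step is therefore to locate all equilibria and read off their hyperbolic type from the competitive coefficients. The boundary $\partial\Sigma$ carries the three axial equilibria and, on each edge, the two-species planar subsystems, whose equilibria and stability are determined by pairwise inequalities among the $a_{ij}$; interior positive equilibria correspond to solutions of $Ay=r\mathbf 1$. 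Sorting the resulting configurations of hyperbolic boundary equilibria reproduces exactly Zeeman's $33$ stable nullcline classes.

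Second, within each nullcline class I would pin down the global portrait on $\Sigma$ using the Poincar\'e index. Because $\Sigma$ is a disk, the indices of its equilibria must sum to $+1$, which together with the known boundary types severely restricts the interior. In the majority of classes there is either no positive equilibrium or a single hyperbolically attracting one, and the absence of periodic orbits (via a Dulac/divergence argument exploiting the identical growth rate, together with monotonicity of the competitive flow) forces every trajectory to converge to an equilibrium; this accounts for classes $1$--$25$, $26$\,a), $26$\,c), $27$\,a) and $28$--$33$. The splitting of the two exceptional nullcline classes into subcases a), b), c) arises from a secondary inequality that fixes the trace and eigenvalue structure at the interior equilibrium $P$, i.e.\ whether $P$ is an attractor, a center, or a repeller, which is what turns $33$ nullcline classes into $37$ dynamical classes.

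Third, for the center subcases $26$\,b) and $27$\,b) the linearization $-\mathrm{diag}(p_i)A$ at the positive equilibrium $P$ has a purely imaginary pair on the tangent plane to $\Sigma$. To upgrade this to a genuine center rather than a weak focus, I would construct a first integral using the identical-growth-rate structure: seeking positive weights $c_i$ so that $\mathrm{diag}(c_i)A$ is skew-symmetric modulo the constraint cutting out $\Sigma$ yields a conserved Volterra-type function $\sum_i c_i\big(y_i-p_i\ln y_i\big)$ along the flow on the invariant surface through $P$. Its level sets foliate the interior of $\Sigma$ by periodic orbits, establishing the center and simultaneously ruling out limit cycles in these two classes.

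Finally, for class $27$\,c) the boundary $\partial\Sigma$ is a heteroclinic cycle through the three axial saddles (the May--Leonard configuration), while the interior equilibrium $P$ is a repeller on $\Sigma$. I would show that this cycle attracts everything except $P$ (equivalently, except the ray $L(P)$ in $\mathbf R^n_+$) by computing its characteristic number, the product over the three saddles of the ratios of stable to unstable eigenvalues, and verifying that the defining inequality makes this product exceed $1$, so a Poincar\'e return map contracts onto the cycle. The hard part of the whole argument is exactly this dichotomy between the center and the heteroclinic-attractor scenarios: where limit cycles can be excluded and where a first integral genuinely exists is delicate, and it is precisely the identical-growth-rate hypothesis---which renders the relevant weighted coefficient matrix skew-symmetric in the borderline cases---that closes the gap and delivers the clean count of $37$ dynamical classes.
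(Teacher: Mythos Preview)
The paper does not prove this theorem; it is quoted verbatim from \cite[Theorem 4.12]{JN} and only the statement and accompanying table are reproduced. So there is no ``paper's own proof'' to compare against, only the hints the paper records from \cite{JN}: the discriminating parameter is the explicit quantity $\theta=\beta_1\beta_2\beta_3-\alpha_1\alpha_2\alpha_3$ of \eqref{Def_theta}, periodic orbits occur iff $\theta=0$, and the first integral in the center cases is the specific function $V(y)=y_1^{\mu}y_2^{\nu}y_3^{\omega}(\beta_2\alpha_3 y_1+\alpha_1\alpha_3 y_2+\beta_1\beta_2 y_3)$ of \eqref{invariance}. Your outline (carrying simplex $\Rightarrow$ Poincar\'e--Bendixson $\Rightarrow$ Zeeman's boundary classification $\Rightarrow$ interior analysis) is the right architecture, and your treatment of class 27\,c) via the saddle characteristic number is standard and matches what $\theta>0$ encodes.

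The one concrete gap is in your handling of the center cases 26\,b) and 27\,b). You propose a Volterra-type integral $\sum_i c_i(y_i-p_i\ln y_i)$ obtained by making $\mathrm{diag}(c_i)A$ skew-symmetric on the tangent to $\Sigma$; but for a generic competitive matrix with $\theta=0$ no such weights need exist, and that condition is not what singles out these subcases. The integral actually used in \cite{JN} (and displayed in \eqref{invariance}) is of a quite different, non-additive form, with exponents built from the $\alpha_i,\beta_i$, and its level sets are the invariant cones $\Lambda(h)$ that intersect $\Sigma$ in the closed orbits. Also note that the heteroclinic cycle in class 26\,b) runs through the planar equilibria $V_1,V_2$ (not only the axial $R_i$), so the boundary picture there is not the pure May--Leonard cycle you describe. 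If you want a self-contained argument, you should replace the Volterra ansatz by the explicit $V$ in \eqref{invariance} and verify directly that $\dot V=0$ precisely when $\theta=0$; the rest of your plan then goes through.
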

Let us explain what the notations on $\Sigma$ in  Table {\rm \ref{biao0}} mean and how to get global dynamical behavior from the pictures in Table {\rm \ref{biao0}}.  By Hirsch's Theorem \ref{theorem:hirsch}, the carrying simplex $\Sigma$ is homeomorphic to the closed unit simplex $S_3$ by radial projection. So we regard $S_3$ as $\Sigma$ and draw pictures on the standard simplex $S_3$,  where three vertexes $\{R_1, R_2, R_3\}$ represent three axial equilibria for (\ref{3DLV}). Let us take the class 14 in Appendix A (see Fig.\ref{fig:1}) as an example to explain the notations and their meaning. A closed dot $\bullet$ denotes an attracting equilibrium (see $R_2, V_2$) on $\Sigma$,  an open dot $\circ$ denotes the repelling one (see $R_1$) on $\Sigma$, and the intersection of stable and unstable manifolds is a saddle on $\Sigma$ (see $R_3, V_1$). The asymptotic behavior for every trajectory on $\Sigma$ is clearly seen from Fig. \ref{fig:1}.

\begin{figure}[ht]
 \begin{center}
    \includegraphics[width=0.35\textwidth]{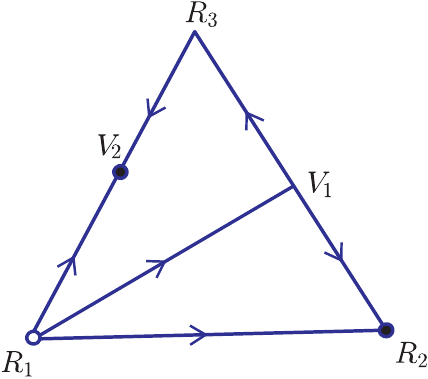}
  \end{center}
\caption{The dynamics in $\Sigma$.} \label{fig:1}
\end{figure}

 Let $\mathcal{A}^{\Sigma}(Q)$ denote the attracting domain for an equilibrium $Q\in \mathcal{E}$ on $\Sigma$. It follows from \cite[Proposition 4.13]{JN} that any pair of nonzero points on $L(y)$ have the same omega limit set. We can obtain the attracting domain for $Q$ as follows
 \begin{equation}\label{attra}
 \mathcal{A}(Q)=\bigcup\{L(y)\setminus \{O\}: y\in \mathcal{A}^{\Sigma}(Q)\}.
\end{equation}
Therefore, the attracting domain for a given $Q$ can be derived by $\mathcal{A}^{\Sigma}(Q)$ drawn in Table {\rm \ref{biao0}} and (\ref{attra}). This has given precise long-term behavior for 34 classes :$1$-$25$, $26$ {\rm a)}, $26$ {\rm c)}, $27$ {\rm a)} and $28$-$33$ in Table {\rm \ref{biao0}}.

It remains to describe the rest three classes: class 26 b), class 27 b), and class 27 c). For this aim, define
\begin{equation}\label{beta_alpha}
\alpha_i=a_{i+1,i+1}-a_{i,i+1},\ \ \beta_i=a_{i,i-1}-a_{i-1,i-1}, \quad i \ \ {\rm mod}\ \ 3,\ {\rm and}
\end{equation}
\vspace{-4mm}
\begin{equation}\label{Def_theta}
\begin{array}{rl}
  \theta:=& \displaystyle\prod_{i=1}^3(a_{i,i-1}-a_{i-1,i-1})-\displaystyle\prod_{i=1}^3(a_{i+1,i+1}-a_{i,i+1}) =\beta_1\beta_2\beta_3-\alpha_1\alpha_2\alpha_3. \\
\end{array}
\end{equation}
The system (\ref{3DLV}) admits nontrivial periodic orbits if and only if $\theta=0$ (see \cite[Theorem 4.3]{JN}), which only occurs in class 26 b) and class 27 b). Both classes possess  heteroclinic cycle connecting three equilibria,  interior of which on $\Sigma$ a family of continuum periodic orbits $\{\Gamma(h)\}:h\in I\}$ are full of. Each closed orbit $\{\Gamma(h)\}$ is the intersection of the carrying simplex $\Sigma$ and invariant cone surface given by
\begin{equation}\label{invariance}
 \Lambda(h):  V(y):=y_1^\mu y_2^\nu y_3^\omega(\beta_2\alpha_3 y_1+\alpha_1\alpha_3 y_2+\beta_1\beta_2 y_3)\equiv h,
\end{equation}
where $\mu=-\beta_2\beta_3/D$, $\nu=-\alpha_1\alpha_3/D$, $\omega=-\alpha_1\beta_2/D$, $D=  (\beta_2\beta_3+\beta_2\alpha_1+\alpha_1\alpha_3)$, and $\alpha_i,\beta_i$ are given in (\ref{beta_alpha}). We depict typical closed orbit and its attracting cone surface for these two classes in Fig.2 and Fig.3. The readers are referred to \cite[Theorem 4.13]{JN} for details.

\begin{figure}[ht]
 \begin{center}
    \includegraphics[width=0.6\textwidth]{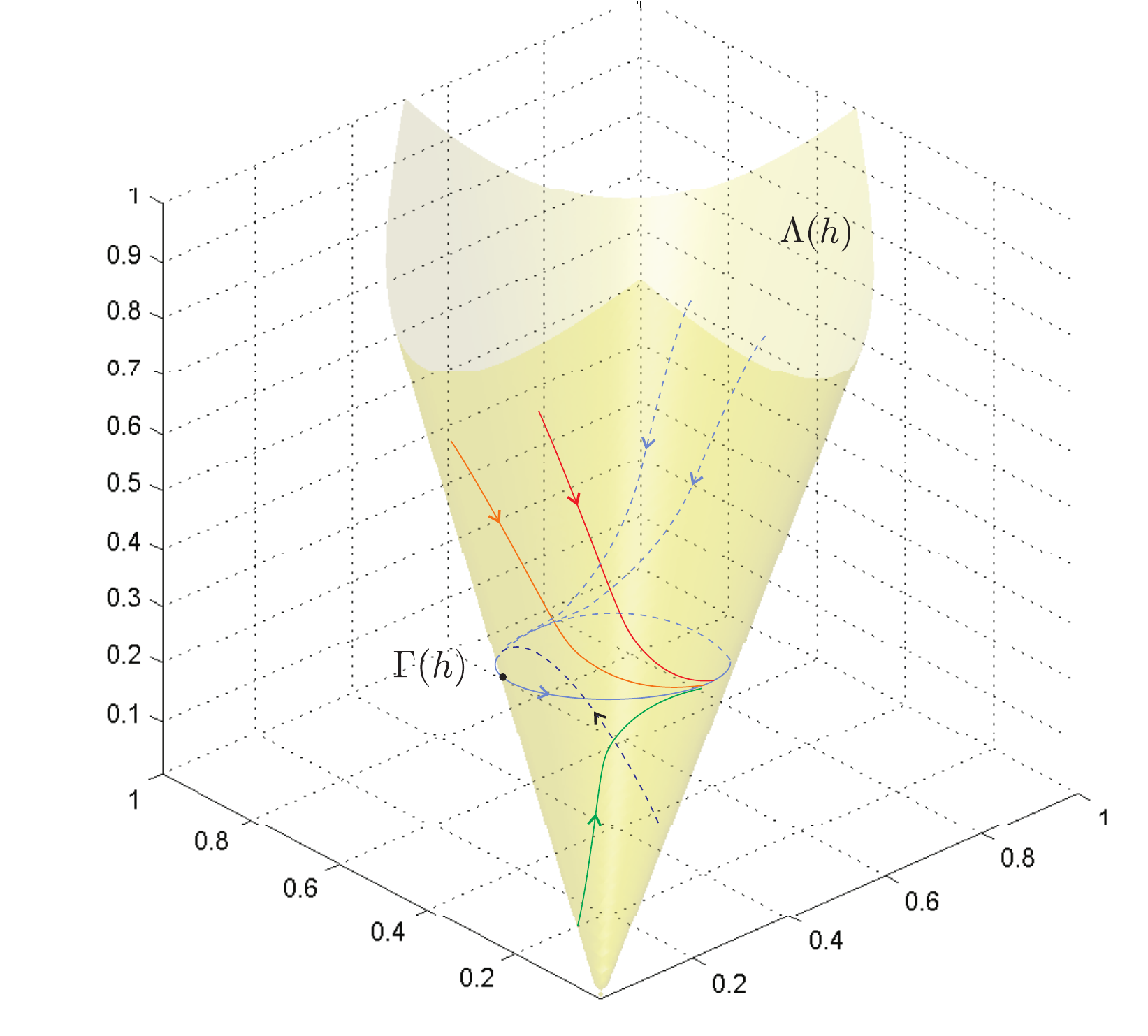}
  \end{center}
\caption{The attracting domain for the closed orbit $\Gamma(h)$ is a cone $\Lambda(h)$.} \label{fig:2}
\end{figure}
\begin{figure}[ht]
 \begin{center}
     \includegraphics[width=0.5\textwidth]{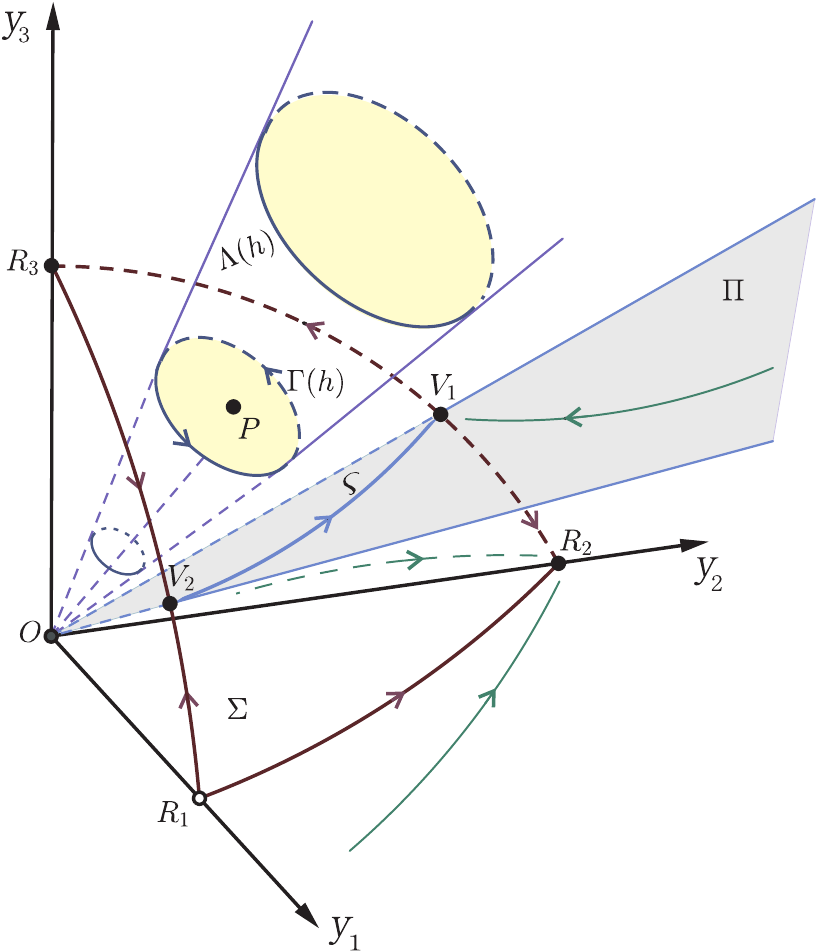}
  \end{center}
\caption{The global phase portraits for a system in class {\rm $26\ b)$}.} \label{fig:3}
\end{figure}

Now we summarize the long-run behavior for these three classes as follows.

\begin{theorem}{\rm(Chen, Jiang, and Niu \cite{JN})}\label{thm4.14}
\begin{enumerate}
\item[{\rm (a)}]   Let the competitive parameters satisfy inequalities in class 26 b) besides $\theta=0$. Then the unique positive equilibrium $P$ attracts $L(P)\setminus \{O\}$; the closed orbit $\Gamma(h)$ attracts $\Lambda(h)\setminus \{O\}$; all other trajectories converge an equilibrium.
\item[{\rm (b)}] Let the competitive parameters satisfy inequalities in class 27 b) besides $\theta=0$. Then the unique positive equilibrium $P$ attracts $L(P)\setminus \{O\}$; the closed orbit $\Gamma(h)$ attracts $\Lambda(h)\setminus \{O\}$.
\item[{\rm (c)}] Let the competitive parameters inequalities in class 27 c) hold. Then  $\mathcal{A}(\mathcal{H})= \mathbf{R}^3_+\setminus L(P)$, where $\mathcal{H}$ is the heteroclinic cycle .
\end{enumerate}
\end{theorem}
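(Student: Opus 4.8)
The plan is to reduce the three-dimensional problem to two-dimensional dynamics on the carrying simplex $\Sigma$ supplied by Hirsch's Theorem \ref{theorem:hirsch}, and then to combine Poincar\'e--Bendixson theory with the conserved quantity $V$ from (\ref{invariance}). Since every trajectory in $\mathbf{R}_+^3\setminus\{O\}$ is asymptotic to one on $\Sigma$, the $\omega$-limit set of any nonzero point lies in $\Sigma$; because $\Sigma$ is homeomorphic to the disk $S_3$ by radial projection, the induced flow is topologically a planar flow, so the Poincar\'e--Bendixson dichotomy (equilibria, periodic orbits, or heteroclinic cycles) governs all limit sets on $\Sigma$. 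It therefore suffices to classify the flow on $\Sigma$ in each of the three classes.

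First I would verify that $V$ in (\ref{invariance}) is a first integral of (\ref{3DLV}) exactly when $\theta=0$. Differentiating $V$ along $F$ and using the definitions of $\mu,\nu,\omega,D$ in terms of the $\alpha_i,\beta_i$ of (\ref{beta_alpha}), the derivative $\dot V$ factors as a fixed positive function of $y$ times the scalar $\theta$ of (\ref{Def_theta}); hence $\theta=0$ forces $\dot V\equiv0$, so each level set $\Lambda(h)$ is invariant. Inside the region of $\Sigma$ enclosed by the boundary heteroclinic cycle and surrounding the positive equilibrium $P$, the level curves $\Gamma(h)=\Lambda(h)\cap\Sigma$ are then simple closed curves containing no equilibrium.

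For parts (a) and (b), each such $\Gamma(h)$ must be a periodic orbit: being a compact invariant set on $\Sigma$ with no equilibrium, Poincar\'e--Bendixson leaves no other possibility. To get the attraction statement, I would note that $\Lambda(h)$ is an invariant two-dimensional cone surface and apply Hirsch's theorem on it; every trajectory in $\Lambda(h)\setminus\{O\}$ is asymptotic to $\Sigma$, hence to $\Lambda(h)\cap\Sigma=\Gamma(h)$, so $\Gamma(h)$ attracts $\Lambda(h)\setminus\{O\}$. The ray $L(P)$ is invariant and attracted to $P$ by the scalar logistic dynamics along it. In case (a) the trajectories lying outside the periodic annulus have $\omega$-limit sets that, by Poincar\'e--Bendixson together with the absence of further closed orbits there, are equilibria, the specific one being read off from the phase portrait in Table \ref{biao0}.

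For part (c) we have $\theta\neq0$, so by the same factorization no interior periodic orbit can exist; the boundary of $\Sigma$ is then a heteroclinic cycle $\mathcal H$ joining the three axial saddles, and $P$ is a repeller on $\Sigma$. By Poincar\'e--Bendixson the $\omega$-limit set of every interior point off $L(P)$ is forced to be $\mathcal H$. The crux, and the step I expect to be hardest, is proving that $\mathcal H$ \emph{attracts} rather than repels: this reduces to showing that the product over the three saddles of the ratios $|\lambda_i^s|/\lambda_i^u$ of the contracting to the expanding eigenvalues of the linearization on $\Sigma$ exceeds one, which is precisely the May--Leonard instability condition and must be checked from the class 27 c) inequalities on the $a_{ij}$. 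The reduction to $\Sigma$ and the passage of $\dot V$ through $\theta$ are comparatively routine; the delicate points are this eigenvalue-product estimate and the exclusion of interior recurrence once $\theta\neq0$.
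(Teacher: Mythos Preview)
The paper does not prove this theorem at all: it is quoted verbatim from \cite{JN} (with the attribution ``Chen, Jiang, and Niu \cite{JN}'') and the surrounding text refers the reader to \cite[Theorem~4.13]{JN} for details. So there is no in-paper proof to compare against; the result is imported as background for the stochastic classification.

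That said, your sketch is essentially the standard route and almost certainly coincides with the argument in \cite{JN}: reduce to the carrying simplex via Theorem~\ref{theorem:hirsch}, observe that $\dot V$ equals a positive function of $y$ times $\theta$, and invoke Poincar\'e--Bendixson on the disk $\Sigma$. One remark on part~(c): once you have $\dot V=(\text{positive})\cdot\theta$, the eigenvalue-product criterion for the stability of $\mathcal H$ is superfluous. In class~27 the exponents $\mu,\nu,\omega$ are all negative and the linear factor in $V$ is positive, so $V\to\infty$ as $y$ approaches $\partial\Sigma$ while $V$ is finite at $P$; hence for $\theta>0$ the function $V$ is a strict Lyapunov function on $\mathrm{Int}\,\Sigma\setminus\{P\}$ that increases along orbits, which \emph{directly} rules out interior periodic orbits and forces $\omega_F(y)\subset\partial\Sigma=\mathcal H$ for every $y\notin L(P)$. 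You do not need to compute the saddle ratios separately, and in fact the Lyapunov argument is cleaner because it handles the global attraction, not just local stability of $\mathcal H$. Your description of part~(a) is also a bit thin on what happens outside the periodic annulus in class~26~b); the heteroclinic structure there involves the two planar equilibria $V_1,V_2$ in addition to the axial $R_i$, and the phase portrait in Table~\ref{biao0} is doing real work that you would have to reproduce.
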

\begin{remark} Among 37 classes, the class 27 c) is the only one for statistical limit cycle, or turbulence founded in Busse et al. \cite{BusseExample,Busse1980science,Busse1980nonlinear}, to occur.
\end{remark}
\subsection{The Complete Classification for Long-Run Behavior via Pull-Back Trajectory}

Combing Theorems \ref{long-run}, \ref{thm_class} and \ref{thm4.14},  we can completely classify the long-run behavior of pull-back trajectories for three dimensional stochastic competitive LV system (\ref{3DSLV}).

\begin{theorem}\label{equilibria}
Among classes $1$-$25$, $26$ {\rm a)}, $26$ {\rm c)}, $27$ {\rm a)} and $28$-$33$, each pull-back trajectory $\Phi(t,\theta_{-t}\omega,y)$ converges a random equilibrium. More precisely, for a given equilibrium $Q\in \mathcal{E}$, $\Phi(t,\theta_{-t}\omega,y)\rightarrow u(\omega)Q$ as $t\rightarrow \infty$ for all $y\in \mathcal{A}(Q)$. The same result hold for the remain three classes when $y$ is located in an attracting domain of an equilibrium.
\end{theorem}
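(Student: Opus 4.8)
The plan is to read the conclusion straight off the deterministic classification and push it through the pull-back long-run result, Theorem \ref{long-run}. The decisive observation is that in every situation covered by the statement the relevant deterministic $\omega$-limit set is a \emph{single} equilibrium, and the random scaling $y\mapsto u(\omega)y$ carries a one-point set to a one-point set; consequently the identity ``pull-back $\omega$-limit set $=u(\omega)\omega_F(y)$'' from Theorem \ref{long-run} upgrades automatically into genuine convergence of the pull-back trajectory. No new analytic estimate is required: the work is entirely that of matching the deterministic phase portraits to the correct initial data.

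First I would record boundedness. Since $r>0$ and $a_{ij}>0$, Hirsch's Theorem \ref{theorem:hirsch} applies to (\ref{3DLV}), so the deterministic system is dissipative and in particular $\Psi(\cdot,y)$ is bounded for every $y\in\mathbf{R}^3_+$; this is exactly the hypothesis needed to invoke Theorem \ref{long-run}. Next, the main classes. For classes $1$--$25$, $26$ a), $26$ c), $27$ a) and $28$--$33$, Theorem \ref{thm_class} guarantees that every deterministic trajectory converges to an equilibrium, so $\mathbf{R}^3_+$ is the union of the attracting domains $\mathcal{A}(Q)$, $Q\in\mathcal{E}$, and $y\in\mathcal{A}(Q)$ forces $\omega_F(y)=\{Q\}$. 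Theorem \ref{long-run} then gives $\Gamma_y(\omega)=u(\omega)\omega_F(y)=\{u(\omega)Q\}$, a single random point. Because the estimate established inside the proof of Theorem \ref{long-run} already shows
$$\mathrm{dist}\big(\Phi(t,\theta_{-t}\omega,y),\,u(\omega)\omega_F(y)\big)\longrightarrow 0\qquad(t\to\infty),$$
and here $u(\omega)\omega_F(y)$ is the single point $u(\omega)Q$, this is precisely $\Phi(t,\theta_{-t}\omega,y)\to u(\omega)Q$.

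Finally, the three exceptional classes. Here the conclusion is asserted only when $y$ lies in the attracting domain $\mathcal{A}(Q)$ of some $Q\in\mathcal{E}$, and Theorem \ref{thm4.14} identifies exactly which initial data these are: in class $26$ b) all points except those on the cone surfaces $\Lambda(h)\setminus\{O\}$, whose orbits converge to the closed orbit $\Gamma(h)$ rather than to an equilibrium; in class $27$ b) the points off $L(P)$ and off the cones $\Lambda(h)$; and in class $27$ c) the ray $L(P)$, whose orbits converge to the positive equilibrium $P$ (the complement $\mathbf{R}^3_+\setminus L(P)$ accumulating on the heteroclinic cycle $\mathcal{H}$). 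For each such $y$ one again has $\omega_F(y)=\{Q\}$, so the argument of the previous paragraph applies verbatim and yields $\Phi(t,\theta_{-t}\omega,y)\to u(\omega)Q$.

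The proof contains no hard analytic step, since all the dynamical content is already packaged in Theorems \ref{thm_class}, \ref{thm4.14} (deterministic) and Theorem \ref{long-run} (the decomposition formula), which I take as known. The only point that genuinely needs care is conceptual rather than technical, namely the passage from ``the pull-back $\omega$-limit set equals $u(\omega)\omega_F(y)$'' to ``the pull-back trajectory converges,'' a passage that is legitimate precisely because $\omega_F(y)$ is a singleton in each case considered. This is where I would be most attentive, invoking the attracting-domain description for the appropriate class instead of claiming convergence for arbitrary $y$; indeed convergence genuinely fails in class $27$ c) for $y\notin L(P)$, which is exactly why that case is excluded here.
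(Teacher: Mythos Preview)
Your proposal is correct and follows exactly the approach the paper takes: the paper states this theorem immediately after the sentence ``Combining Theorems \ref{long-run}, \ref{thm_class} and \ref{thm4.14}, we can completely classify the long-run behavior of pull-back trajectories\ldots'' and gives no further proof. You have simply spelled out what the paper leaves implicit --- boundedness from Hirsch's theorem, the singleton $\omega$-limit set from the deterministic classification, and then Theorem \ref{long-run} to transfer this to the pull-back trajectory --- and correctly flagged that the passage from ``$\Gamma_y(\omega)=\{u(\omega)Q\}$'' to actual convergence is automatic precisely because the set is a single point.
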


\begin{theorem}\label{periodic}
 Assume that $\theta=0$ and the competitive parameters inequalities in class 26 b) or class 27 b) hold. Then
  the pull-back omega-limit set $\Gamma_y(\omega)$ of the trajectory $\Phi(t,\theta_{-t}\omega,y)$ emanating from $y$ is $u(\omega)\Gamma(h)$ if and only if $y\in \Lambda(h)$.
 \end{theorem}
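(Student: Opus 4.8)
The plan is to push the whole statement down to the deterministic system by means of the pull-back identity of Theorem~\ref{long-run}, and then to extract the deterministic fact from Theorem~\ref{thm4.14} together with the invariance of the cone surfaces $\Lambda(h)$ recorded in \cite{JN}. Since the parameters lie in class 26 b) or 27 b) with $r>0$ and all $a_{ij}>0$, Hirsch's Theorem~\ref{theorem:hirsch} makes (\ref{dlv}) dissipative, so every forward trajectory $\Psi(\cdot,y)$ is bounded and Theorem~\ref{long-run} applies at each $y$, giving $\Gamma_y(\omega)=u(\omega)\,\omega_F(y)$ for all $\omega$. As $u(\omega)>0$ almost surely by (\ref{lre}), multiplication by $u(\omega)$ is a bijection of $\mathbf{R}^3_+$, so for $y\neq O$ the random set identity $u(\omega)\,\omega_F(y)=u(\omega)\,\Gamma(h)$ holds if and only if the purely deterministic identity $\omega_F(y)=\Gamma(h)$ holds. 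It thus suffices to prove the equivalence $\omega_F(y)=\Gamma(h)\iff y\in\Lambda(h)$ for the flow of (\ref{dlv}).

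For the ``if'' direction, take $y\in\Lambda(h)\setminus\{O\}$, where $\Gamma(h)$ is one of the genuine periodic orbits. Theorem~\ref{thm4.14}(a) (class 26 b)) and Theorem~\ref{thm4.14}(b) (class 27 b)) both assert that $\Gamma(h)$ attracts $\Lambda(h)\setminus\{O\}$; since $\Gamma(h)$ is a periodic orbit, hence a minimal set, the resulting inclusion $\omega_F(y)\subset\Gamma(h)$ is in fact an equality $\omega_F(y)=\Gamma(h)$. By the reduction of the first paragraph this gives $\Gamma_y(\omega)=u(\omega)\Gamma(h)$.

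For the converse I would use that each level set of the function $V$ cutting out $\Lambda(h)$ is invariant for (\ref{dlv})---this is exactly the content of \cite[Theorem~4.13]{JN}, that the $\Lambda(h)$ are invariant cone surfaces---so that $V$ is a first integral of (\ref{dlv}) on the interior region foliated by the periodic orbits; the identity $\mu+\nu+\omega=-1$, immediate from the definitions of $\mu,\nu,\omega$ and $D$, makes $V$ homogeneous of degree zero and so confirms that its level sets are genuine cones. Granting this, assume $\omega_F(y)=\Gamma(h)$. Then $y\in\mathrm{Int}\,\mathbf{R}^3_+$, because a trajectory meeting a coordinate face stays on that invariant face and cannot converge to the interior orbit $\Gamma(h)$; along $\Psi(\cdot,y)$ the quantity $V$ is conserved, so by continuity $V(y)=\lim_{t\to\infty}V(\Psi(t,y))=h$, the common value of $V$ on $\Gamma(h)\subset\Lambda(h)$, whence $y\in\Lambda(h)$. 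The routine ingredients---boundedness and the scaling bijection---are immediate, so I regard the converse as the crux: its real content is that a trajectory cannot approach $\Gamma(h)$ from a different cone $\Lambda(h')$ with $h'\neq h$, and this is precisely the disjointness and invariance of the cone surfaces inherited from \cite{JN}.
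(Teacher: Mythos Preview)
Your argument is correct and matches the paper's approach: the paper does not give a separate proof of this theorem but simply states it as an immediate consequence of combining Theorem~\ref{long-run} with Theorem~\ref{thm4.14}, exactly the reduction you carry out. Your converse via the first integral $V$ (homogeneity of degree zero from $\mu+\nu+\omega=-1$, conservation along interior trajectories, continuity at the limit set) is a clean way of unpacking what the paper leaves implicit in its citation of \cite{JN}; the paper would equally accept the converse as following from the exhaustive case description in Theorem~\ref{thm4.14}, since every interior point not on $\Lambda(h)$ has its $\omega$-limit set identified there as either another $\Gamma(h')$, the equilibrium $P$, or (in class 26\,b)) a boundary equilibrium.
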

\begin{theorem}\label{heterclinic}
Assume that $\theta>0$ and the competitive parameters inequalities in class 27 c) hold. Then
  the pull-back omega-limit set $\Gamma_y(\omega)$ of the trajectory $\Phi(t,\theta_{-t}\omega,y)$ emanating from $y$ is $u(\omega)\mathcal{H}$ if and only if $y\notin  L(P)$, where $\mathcal{H}$ is the heteroclinic orbit for (\ref{3DLV}).
\end{theorem}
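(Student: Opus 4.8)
The plan is to deduce this theorem directly from the long-run behavior result Theorem \ref{long-run} combined with the deterministic classification of class 27 c) recorded in Theorem \ref{thm4.14}(c). The only preliminary needed is that every trajectory of (\ref{3DLV}) emanating from a nonzero point is bounded: this is guaranteed by Hirsch's carrying simplex theorem (Theorem \ref{theorem:hirsch}), which asserts that (\ref{3DLV}) is dissipative with fundamental attractor $\Sigma\cup\{O\}$, so that $\Psi(t,y)$ is asymptotic to a trajectory lying in the compact set $\Sigma$ for every $y\neq O$. Consequently Theorem \ref{long-run} applies at each $y\neq O$ and yields the master identity
$$\Gamma_y(\omega)=u(\omega)\,\omega_F(y),$$
which reduces the whole statement to an analysis of the deterministic omega-limit set $\omega_F(y)$ in class 27 c).

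For the sufficiency direction, suppose $y\notin L(P)$. Since $O\in L(P)$, we have $y\neq O$, so $\Psi(\cdot,y)$ is bounded and $\omega_F(y)$ is well defined. Theorem \ref{thm4.14}(c) states that in class 27 c) the attracting domain of the heteroclinic cycle $\mathcal{H}$ is exactly $\mathbf{R}^3_+\setminus L(P)$; hence $y\in\mathcal{A}(\mathcal{H})$ and $\omega_F(y)=\mathcal{H}$. Substituting into the master identity gives $\Gamma_y(\omega)=u(\omega)\mathcal{H}$, as required.

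For the necessity direction I would argue by contraposition, showing that $y\in L(P)$ forces $\Gamma_y(\omega)\neq u(\omega)\mathcal{H}$. If $y=O$, then $\Phi(t,\omega,O)\equiv O$ and $\Gamma_O(\omega)=\{O\}$, a single point that cannot equal the one-dimensional set $u(\omega)\mathcal{H}$. If $y\in L(P)\setminus\{O\}$, then the positive equilibrium $P$ attracts $L(P)\setminus\{O\}$ by Theorem \ref{thm4.14}(c), so $\omega_F(y)=\{P\}$ and the master identity gives $\Gamma_y(\omega)=\{u(\omega)P\}$, again a singleton distinct from $u(\omega)\mathcal{H}$. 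In both cases $\Gamma_y(\omega)\neq u(\omega)\mathcal{H}$, which is precisely the contrapositive of the necessity claim. The one point that must be recorded carefully is that scalar multiplication by the a.s. strictly positive random variable $u(\omega)$ is a bijection of $\mathbf{R}^3_+$, so that the equalities $u(\omega)\,\omega_F(y)=u(\omega)\mathcal{H}$ and $\omega_F(y)=\mathcal{H}$ are equivalent for a.e. fixed $\omega$ (on which $0<u(\omega)<\infty$).

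I expect no serious obstacle: the theorem is essentially a transcription of the deterministic phase portrait through the stochastic decomposition formula. The only genuine subtlety is the passage between a set and its positive dilate — one must note that $u$ is strictly positive a.s. so that $\Gamma_y(\omega)=u(\omega)\mathcal{H}$ holds as an equality of sets rather than merely up to the random scalar, and that the heteroclinic cycle $\mathcal{H}$, being a nondegenerate one-dimensional invariant set, is never a singleton, which is exactly what distinguishes the case $y\notin L(P)$ from $y\in L(P)$.
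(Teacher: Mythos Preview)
Your proposal is correct and follows precisely the route the paper intends: the paper gives no separate proof of Theorem \ref{heterclinic} but presents it (together with Theorems \ref{equilibria} and \ref{periodic}) as an immediate consequence of combining Theorem \ref{long-run} with the deterministic classification in Theorems \ref{thm_class} and \ref{thm4.14}, exactly as you do via the master identity $\Gamma_y(\omega)=u(\omega)\,\omega_F(y)$. One small citation remark: when you invoke Theorem \ref{thm4.14}(c) to say $P$ attracts $L(P)\setminus\{O\}$, note that part (c) only asserts $\mathcal{A}(\mathcal{H})=\mathbf{R}^3_+\setminus L(P)$; the attraction of $L(P)\setminus\{O\}$ to $P$ is supplied by the ray-invariance fact recorded just before (\ref{attra}) (Proposition 4.13 of \cite{JN}), so you may wish to cite that instead.
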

\begin{remark} When a random element is introduced into the time dependence of the system, every sample path not emanating from $L(P)$ cyclically fluctuates in class 27 c). The turbulent fluid state is characterized by three stationary solutions, all of which are unstable so that the actually realized state wanders from a neighbourhood of one of the stationary solutions to that of the next.
\end{remark}
\subsection{The Classification via Stationary Measures}
First, let us consider the case for trajectory of (\ref{3DLV}) to converge to an equilibrium.
\begin{theorem}\label{pdfc}
Let $Q\in \mathcal{E}$. Then for each $y\in \mathcal{A}(Q)$, $P(t,y,\cdot)  {\rightarrow}\
\mu^\sigma_Q$ weakly as $t\rightarrow \infty$, and
\begin{equation}\label{pdfsc}
\lim_{t\rightarrow \infty}P(t,y,A)=\mu^\sigma_Q(A), \ {\rm for\ any}\
A\in
 \mathcal{B}(\mathbf{R}_+^3).
\end{equation}
Moreover, $\mu^\sigma_Q$ is the unique stationary  measure for the
Markov semigroup $P_t$ in $\mathcal{A}(Q)$, and hence, it
is ergodic when the system is restricted on $\mathcal{A}(Q)$, and $\mu^\sigma_Q(\cdot) \stackrel{w}{\rightarrow}
\delta_Q(\cdot)$ as $\sigma \rightarrow 0$. These results are available for classes $1$-$25$, $26$ {\rm a)}, $26$ {\rm c)}, $27$ {\rm a)} and $28$-$33$
as well as any equilibrium in classes 26 b), 27 b) and 27 c) when we restrict the state space in its stable manifold.
\end{theorem}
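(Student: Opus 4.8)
The plan is to read off Theorem \ref{pdfc} as a direct specialization of Theorem \ref{bwce} to the three-dimensional system (\ref{3DSLV}), using the deterministic classification only to identify the attracting domains. First I would fix a nonzero equilibrium $Q\in \mathcal{E}$ (the origin is a repeller, so $\mathcal{A}(O)$ is degenerate and the assertion is vacuous there) and take any $y\in \mathcal{A}(Q)$. Because $Q$ is an equilibrium, the definition of attracting domain forces $\omega_F(y)=\{Q\}$, so the deterministic trajectory $\Psi(t,y)$ is bounded and converges to $Q$; this is exactly the hypothesis of Theorem \ref{bwce}. That theorem then delivers at once the weak convergence $P(t,y,\cdot)\to\mu^\sigma_Q$, the pointwise limit (\ref{pdfsc}) (valid because the distribution function (\ref{bdf}) of $\mu^\sigma_Q$ is continuous, so every Borel set is a $\mu^\sigma_Q$-continuity set), the uniqueness of $\mu^\sigma_Q$ among stationary measures supported in $\mathcal{A}(Q)$, and the small-noise limit $\mu^\sigma_Q\stackrel{w}{\rightarrow}\delta_Q$. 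Ergodicity on $\mathcal{A}(Q)$ is the standard consequence of uniqueness of the stationary measure for the restricted semigroup.

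It then remains to verify the hypotheses hold for the listed classes. For classes $1$-$25$, $26$ a), $26$ c), $27$ a) and $28$-$33$, Theorem \ref{thm_class} asserts that every trajectory of (\ref{3DLV}) converges to an equilibrium; hence for each such $Q$ the attracting domain is the cone $\mathcal{A}(Q)=\bigcup\{L(y)\setminus\{O\}: y\in\mathcal{A}^\Sigma(Q)\}$ of (\ref{attra}), and these cones together with the lower-dimensional separatrices and $L(P)$ exhaust $\mathbf{R}^3_+\setminus\{O\}$. Applying the first paragraph on each $\mathcal{A}(Q)$ settles these classes.

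For the exceptional classes $26$ b), $27$ b) and $27$ c), an equilibrium $Q$ may be a saddle on $\Sigma$, so its genuine attracting domain is only its stable manifold. The plan is to restrict the phase space to the cone over $W^s(Q)\cap\Sigma$; by the Cone Invariance Theorem \ref{cinav} this cone is invariant for both $\Phi(t,\omega,\cdot)$ and the pull-back flow $\Phi(t,\theta_{-t}\omega,\cdot)$, and inside it every deterministic orbit still tends to $Q$, so Theorem \ref{bwce} applies verbatim there. I expect the only delicate point --- and the main bookkeeping obstacle --- to be the uniqueness clause: one must ensure that on this restricted invariant cone $Q$ is the unique attractor, so that the periodic orbits (classes $26$ b), $27$ b)) or the heteroclinic cycle (class $27$ c)) living off $W^s(Q)$, guaranteed by Theorem \ref{thm4.14}, do not furnish a competing stationary measure inside $\mathcal{A}(Q)$. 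Since all the analytic content is already carried by Theorem \ref{bwce}, no fresh estimate is required and the argument reduces to this identification of attracting domains.
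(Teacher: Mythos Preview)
Your proposal is correct and follows essentially the same route as the paper: the paper's own proof simply observes that Cone Invariance (Theorem \ref{cinav}) makes $\mathcal{A}(Q)$ invariant for $\Phi$, so the argument of Theorem \ref{wce} goes through verbatim with ${\rm Int}\mathbf{R}_+^n$ replaced by $\mathcal{A}(Q)$---which is precisely the content of Theorem \ref{bwce} that you invoke. The classification step and the handling of the exceptional classes 26 b), 27 b), 27 c) via restriction to the stable manifold is also exactly what the paper intends.
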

\begin{proof}
For a given equilibrium $Q\in \mathcal{E}$, it follows from the cone invariance that $\Phi(t,\omega,y)\in \mathcal{A}(Q)$ for any $y\in \mathcal{A}(Q)$.  Then the probability distribution function $P(t,y,\cdot)$ only supports in $\mathcal{A}(Q)$ if $y\in \mathcal{A}(Q)$. Thus, replacing ${\rm Int}\mathbf{R}_+^n$ by $\mathcal{A}(Q)$, we can verify this theorem in the quite same manner as that of Theorem \ref{wce}. We omit it.
\end{proof}

\begin{theorem}\label{allsm}
Suppose that (\ref{3DLV}) is one of systems in  classes $1$-$25$, $26$ {\rm a)}, $26$ {\rm c)}, $27$ {\rm a)} and $28$-$33$. Then all its stationary measures are the convex combinations of ergodic stationary measures $\{\mu^\sigma_Q: Q\in \mathcal{E}\}$.
As $\sigma \rightarrow 0$, all their limiting measures are  the convex combinations of the Dirac measures $\{\delta_Q(\cdot): Q\in \mathcal{E}\}$.
\end{theorem}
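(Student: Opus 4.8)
The plan is to exploit the explicit pointwise limit \eqref{pdfsc} together with the stationarity of an arbitrary invariant measure. First I would record the structural fact, coming from Theorem \ref{thm_class}, that in every one of the listed classes each deterministic trajectory converges to an equilibrium; hence the attracting domains partition the phase space,
$$\mathbf{R}_+^3=\bigsqcup_{Q\in\mathcal{E}}\mathcal{A}(Q),$$
the union being disjoint because the $\omega$-limit equilibrium of a point is unique, each $\mathcal{A}(Q)$ being Borel and, by \eqref{attra} and the Cone Invariance Theorem \ref{cinav}, invariant for the Markov semigroup. Note that $\mathcal{A}(O)=\{O\}$ since $O$ is a repeller, and $\mu^\sigma_O=\delta_O$. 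The finiteness of $\mathcal{E}$ will be used throughout.

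Next, let $\nu$ be an arbitrary stationary measure for $\{P_t\}$ and fix $A\in\mathcal{B}(\mathbf{R}_+^3)$. Stationarity gives, for every $t\ge 0$,
$$\nu(A)=\int_{\mathbf{R}_+^3}P(t,y,A)\,\nu(dy).$$
For each $y\neq O$ there is a unique $Q(y)\in\mathcal{E}$ with $y\in\mathcal{A}(Q(y))$, and Theorem \ref{pdfc} (specifically \eqref{pdfsc}) yields $P(t,y,A)\to\mu^\sigma_{Q(y)}(A)$ as $t\to\infty$; for $y=O$ one has $P(t,O,A)=\delta_O(A)=\mu^\sigma_O(A)$. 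Since $0\le P(t,y,A)\le 1$ and $\nu$ is a probability measure, the bounded convergence theorem permits passing to the limit under the integral sign, so letting $t\to\infty$ gives
$$\nu(A)=\int_{\mathbf{R}_+^3}\mu^\sigma_{Q(y)}(A)\,\nu(dy)=\sum_{Q\in\mathcal{E}}\nu\big(\mathcal{A}(Q)\big)\,\mu^\sigma_Q(A).$$
Writing $c_Q:=\nu(\mathcal{A}(Q))$ one has $c_Q\ge 0$ and $\sum_Q c_Q=\nu(\mathbf{R}_+^3)=1$, whence $\nu=\sum_{Q\in\mathcal{E}}c_Q\,\mu^\sigma_Q$ is the desired convex combination. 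The measurability of $y\mapsto\mu^\sigma_{Q(y)}(A)=\sum_Q I_{\mathcal{A}(Q)}(y)\,\mu^\sigma_Q(A)$ is immediate since $\mathcal{E}$ is finite.

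For the zero-noise limit, suppose $\nu^{\sigma_i}$ are stationary measures with $\sigma_i\to 0$ and $\nu^{\sigma_i}\stackrel{w}{\rightarrow}\mu$. By the first part, $\nu^{\sigma_i}=\sum_{Q\in\mathcal{E}}c_Q(\sigma_i)\mu^{\sigma_i}_Q$ with $c_Q(\sigma_i)\in[0,1]$ and $\sum_Q c_Q(\sigma_i)=1$. Because $\mathcal{E}$ is finite, the coefficient vectors lie in a compact simplex, so along a subsequence $c_Q(\sigma_i)\to c_Q^*$ with $\sum_Q c_Q^*=1$. Theorem \ref{pdfc} (and Theorem \ref{bwce} for boundary equilibria) gives $\mu^{\sigma_i}_Q\stackrel{w}{\rightarrow}\delta_Q$; hence for any $f\in C_b(\mathbf{R}_+^3)$,
$$\int f\,d\nu^{\sigma_i}=\sum_{Q\in\mathcal{E}}c_Q(\sigma_i)\int f\,d\mu^{\sigma_i}_Q\longrightarrow\sum_{Q\in\mathcal{E}}c_Q^*\,f(Q)=\int f\,d\Big(\sum_{Q\in\mathcal{E}}c_Q^*\delta_Q\Big),$$
the interchange of the finite sum with the limit being harmless. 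By uniqueness of the weak limit, $\mu=\sum_{Q\in\mathcal{E}}c_Q^*\delta_Q$, a convex combination of the Dirac measures $\{\delta_Q:Q\in\mathcal{E}\}$; that $\mu$ is supported on $\mathcal{E}$ is also consistent with Corollary \ref{mainc}, since here the Birkhoff center is exactly $\mathcal{E}$.

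The main obstacle I anticipate is not analytic but structural: one must be sure that the pointwise convergence \eqref{pdfsc} is available at \emph{every} $y$, including points lying on the stable manifolds (cones) of saddle and repelling equilibria, so that the basins genuinely partition $\mathbf{R}_+^3$ into Borel, semigroup-invariant pieces and no stationary mass escapes the decomposition. Granting this --- which is precisely what Theorems \ref{thm_class} and \ref{pdfc} supply for the listed classes --- the remaining steps are the bounded-convergence interchange and a routine compactness argument on the finite-dimensional coefficient simplex.
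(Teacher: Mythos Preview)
Your proof is correct and follows essentially the same strategy as the paper: use the partition $\mathbf{R}_+^3=\bigcup_{Q\in\mathcal{E}}\mathcal{A}(Q)$, invoke stationarity of $\nu$, and pass to the limit using \eqref{pdfsc} to obtain $\nu=\sum_Q\nu(\mathcal{A}(Q))\mu^\sigma_Q$. The only cosmetic difference is that the paper first replaces $P(t,y,A)$ by its Ces\`aro mean $\frac{1}{T}\int_0^T P(t,y,A)\,dt$ before passing to the limit, whereas you apply bounded convergence directly to $P(t,y,A)$; since \eqref{pdfsc} already gives full pointwise convergence, your route is marginally cleaner and the time-averaging step is unnecessary. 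Your treatment of the $\sigma\to 0$ part --- extracting a convergent coefficient vector by compactness of the finite simplex and combining with $\mu^{\sigma_i}_Q\stackrel{w}{\rightarrow}\delta_Q$ --- spells out what the paper dismisses in one line, and is correct.
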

\begin{proof}
Assume that (\ref{3DLV}) is one of systems of the given 34 classes. Then $\mathbf{R}_+^3=\bigcup\{\mathcal{A}(Q): Q\in \mathcal{E}\}$. Let $Q\in \mathcal{E}$. Then (\ref{pdfsc}) implies that
\begin{equation}\label{3avtp}
\lim_{T\rightarrow \infty}\frac{1}{T}\int_0^TP(t,y,A)dt
=\mu^\sigma_Q(A),\ {\rm for\ any}\ y\in \mathcal{A}(Q),\ {\rm
and}\ A\in
 \mathcal{B}(\mathbf{R}_+^3).
\end{equation}
Suppose $\nu$ is an arbitrary  stationary measure for the
Markov semigroup $P_t$ in $\mathbf{R}_+^3$. Then for any $ A\in
 \mathcal{B}(\mathbf{R}_+^3)$,
 $$\int_{\mathbf{R}_+^3}\nu(dy)P(t,y,A)=\nu(A),$$
that is,
\begin{equation}\label{3another}
\sum_{Q\in \mathcal{E}}\int_{\mathcal{A}(Q)}\nu(dy)P(t,y,A)=\nu(A).
\end{equation}
Integrating (\ref{3another}) with respect to $t$ from $0$ to $T$ and using (\ref{pdfsc}), we get that
$$\sum_{Q\in \mathcal{E}}\nu(\mathcal{A}(Q))\mu^\sigma_Q(A)=\nu(A).$$
However,
$$\sum_{Q\in \mathcal{E}}\nu(\mathcal{A}(Q))=\nu(\mathbf{R}_+^3)=1.$$
This shows that $\nu$ is the convex combination of $\{\mu^\sigma_Q: Q\in \mathcal{E}\}$. The remain result follows from Theorem \ref {pdfc} immediately.
\end{proof}

\begin{theorem}\label{smucone}
  Assume that $\theta=0$ and the competitive parameters inequalities in class 26 b) or class 27 b) hold. Then there exists a unique ergodic nontrivial stationary measure $\nu_h^{\sigma}$ supporting on the cone

\begin{equation}\label{invariance27}
 \Lambda(h):  V(y):=y_1^\mu y_2^\nu y_3^\omega(\beta_2\alpha_3 y_1+\alpha_1\alpha_3 y_2+\beta_1\beta_2 y_3)\equiv h\in I,
\end{equation}
where $\mu=-\beta_2\beta_3/D$, $\nu=-\alpha_1\alpha_3/D$, $\omega=-\alpha_1\beta_2/D$, $D=  (\beta_2\beta_3+\beta_2\alpha_1+\alpha_1\alpha_3)$, $\alpha_i,\beta_i$ are given in (\ref{beta_alpha}), and $I$ is the feasible image interval for $V$.
 $\nu_h^\sigma$ converges weakly to the Haar measure on the closed orbit $\Gamma(h)$ as $\sigma \rightarrow 0.$
 \end{theorem}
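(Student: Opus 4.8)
The plan is to handle in turn the existence together with the location of the support, then uniqueness and ergodicity, and finally the vanishing-noise limit; the genuinely hard part will be the uniqueness/ergodicity.

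\textbf{Step 1 (existence and support).} Fix $h\in I$ and choose any $y_0\in\Lambda(h)\setminus\{O\}$. By Theorem \ref{thm4.14}(a)--(b) the deterministic trajectory $\Psi(\cdot,y_0)$ is bounded with $\omega_F(y_0)=\Gamma(h)$, so Theorem \ref{lsss} furnishes a stationary measure $\nu_h^\sigma:=\nu_{y_0}^\sigma$; since the drift in (\ref{3DSLV}) makes $O$ a repeller (the equivalent It\^o growth rate $r+\frac{\sigma^2}{2}>0$ near $O$), the last assertion of Theorem \ref{lsss} gives $\nu_h^\sigma(\{O\})=0$. To pin down the support I would first record that the first integral $V$ in (\ref{invariance27}) is homogeneous of degree $0$, because $\mu+\nu+\omega=-D/D=-1$ while its linear factor has degree $1$. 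Hence, by the decomposition formula (\ref{sdfs}) and the $\Psi$-invariance of $V$, one has $V\big(\Phi(t,\omega,y)\big)=V\big(\Psi(\int_0^t g(s,\omega,1)\,ds,y)\big)=V(y)$, so $\Lambda(h)$ is truly invariant for $\Phi$ and the cone over $\Gamma(h)=\Lambda(h)\cap\Sigma$ is exactly $\Lambda(h)$, i.e. $\Lambda(\omega_F(y_0))=\Lambda(\Gamma(h))=\Lambda(h)$. Theorem \ref{ssm} then yields ${\rm supp}\,\nu_h^\sigma\subset\Lambda(h)\setminus\{O\}$.

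\textbf{Step 2 (reduction for uniqueness).} By the Cone Invariance Theorem \ref{cinav} the set $\Lambda(h)\setminus\{O\}$ is invariant for the semigroup, so it suffices to prove that the Ces\`{a}ro averages $\frac1T\int_0^T P(t,y,\cdot)\,dt$ converge weakly to one and the same probability measure for every $y\in\Lambda(h)\setminus\{O\}$: this forces all stationary measures carried by $\Lambda(h)$ to coincide, and a unique stationary measure on an invariant set is automatically ergodic. Writing (\ref{sdfs}) as $\Phi(t,\omega,y)=g(t,\omega,1)\,\Psi(s(t),y)$ with internal clock $s(t):=\int_0^t g(s,\omega,1)\,ds$, I would disintegrate along $\Lambda(h)\setminus\{O\}\cong\Gamma(h)\times(0,\infty)$ using the radial projection $\pi$ onto $\Sigma$. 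As $\pi$ is scale invariant, the angular part obeys $\pi\big(\Phi(t,\omega,y)\big)=\pi\big(\Psi(s(t),y)\big)$, a time change of the deterministic flow whose occupation measure on the circle $\Gamma(h)$ tends, as the internal time grows, to the normalized time-parametrization, namely the Haar measure $m_h$; the radial factor $g(t,\omega,1)$ is the Logistic process whose empirical law converges to $p^\sigma$ by the ergodicity used in (\ref{eq time 3}) and Lemma \ref{lem8.3}.

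\textbf{Step 3 (the crux).} The main obstacle is to show that these two contributions decouple in the averaged measure and that the angular one does not depend on the starting point. The difficulty is structural: the noise in (\ref{3DSLV}) is purely radial, so the angular motion is deterministic and the diffusion on the two-dimensional cone is degenerate, ruling out standard non-degenerate ergodicity. Using the asymptotic phase of $\Psi(\cdot,y)$ on the attracting cycle $\Gamma(h)$, one is reduced to proving that $s(t)$ taken modulo the period $\tau_h$ of $\Gamma(h)$ equidistributes and that this equidistribution is insensitive to the initial phase. Here the randomness is decisive: through the Ornstein--Uhlenbeck conjugacy of Lemma \ref{lem8.3}, the clock $s(t)=\int_0^t g(s,\omega,1)\,ds$ has fluctuations whose variance grows without bound, so $s(t)\bmod\tau_h$ converges in law to the uniform distribution and the initial phase is washed out. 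Granting this mixing, the common Ces\`{a}ro limit is the image of $p^\sigma(\lambda)\,d\lambda\otimes m_h$ under $(\lambda,z)\mapsto\lambda z$; equivalently it is the marginal on $\mathbf{R}_+^3$ of the unique invariant measure of the random dynamical system $(\theta,\Phi)$ concentrated on the random periodic orbit $u(\omega)\Gamma(h)$ with Haar conditionals, and its uniqueness delivers both the uniqueness of $\nu_h^\sigma$ on $\Lambda(h)$ and its ergodicity. I would isolate the quantitative mixing/central-limit estimate for $s(t)$ as a separate lemma, since it is the only genuinely new analytic input.

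\textbf{Step 4 (vanishing-noise limit).} To identify the limit as $\sigma\to0$, I would invoke Corollary \ref{mainc} (equivalently Theorem \ref{main}): the family $\{\nu_h^\sigma\}_{0<|\sigma|\le\sigma_0}$ is tight, and any weak limit $\mu$ is $\Psi$-invariant with ${\rm supp}\,\mu$ in the Birkhoff center and $\mu(\Sigma)=1$. Because each $\nu_h^\sigma$ is carried by the closed conserved level $\overline{\Lambda(h)}$, so is $\mu$; but inside $\Lambda(h)\setminus\{O\}$ the only recurrent set is $\Gamma(h)$ (every orbit converges to it by Theorem \ref{thm4.14}), whence ${\rm supp}\,\mu\subset\Gamma(h)$. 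An invariant Borel probability measure supported on a single periodic orbit is necessarily its Haar measure $m_h$, so every subsequential limit equals $m_h$; tightness then gives $\nu_h^\sigma\stackrel{w}{\rightarrow}m_h$ as $\sigma\to0$, which completes the proof.
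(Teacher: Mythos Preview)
Your Steps 1, 2 and 4 are sound and match the paper: existence via Theorem \ref{lsss}, support via Theorem \ref{ssm} and the degree-zero homogeneity of $V$, and the vanishing-noise limit via Corollary \ref{mainc} are all exactly what the paper does. Your coordinate reduction is also the paper's: they too write $\Lambda(h)\setminus\{O\}\cong(0,\infty)\times\Gamma(h)$ and pass to $(H,T)$ with $H=\ln g$ and $T$ the phase on the circle $S=\mathbf{R}_+\bmod\Upsilon$, obtaining the degenerate two-dimensional system
\[
dH=r(1-e^{H})\,dt+\sigma\,dB_t,\qquad dT=e^{H}\,dt.
\]

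The gap is in Step 3, and it is genuine. You propose to obtain uniqueness by showing directly that the Ces\`aro averages converge to the product of $p^\sigma(\lambda)\,d\lambda$ and Haar on $\Gamma(h)$, and you locate the obstruction correctly: the noise is purely radial, so this is a decoupling statement. But the argument you offer---that the variance of $s(t)$ grows, hence $s(t)\bmod\tau_h$ converges in law to uniform---does not deliver what you need. Convergence in law of the angular marginal at a single large time $t$ says nothing about the \emph{joint} time-averaged law of $\big(g(t),\,s(t)\bmod\tau_h\big)$, and it is precisely this joint limit (and its invariance under phase shifts) that forces independence of the starting point. Your proposed ``separate lemma'' would have to establish that conditionally on $g(t)$ the phase still equidistributes, or equivalently that the $(H,T)$ process is ergodic; as written you are assuming the conclusion.

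The paper closes this gap by a different, cleaner route: it verifies that the $(H,T)$ semigroup on $\mathbf{R}\times S$ is strong Feller (by lifting to $(H,\widetilde T)$ on $\mathbf{R}^2$ and invoking a hypoelliptic-type result, Theorem 4.2 of \cite{DONG}, then projecting modulo $\Upsilon$) and irreducible (by an explicit construction of a continuous control hitting any open rectangle, followed by the support theorem for Wiener measure). Strong Feller plus irreducibility gives a unique, hence ergodic, invariant probability for $(H,T)$, and pulling back through the homeomorphism $\psi$ yields uniqueness and ergodicity of $\nu_h^\sigma$ on $\Lambda(h)$. This approach sidesteps any quantitative CLT or decoupling estimate for the clock; what it buys is that the degeneracy is handled once and for all by the Lie-bracket structure ($\partial_T$ is generated from $\partial_H$ via the drift $e^H\partial_T$), rather than by ad hoc probabilistic asymptotics.
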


 \begin{proof}
 Fix $h\in I$ and
$y_0\in\Gamma(h)$, define $\varphi(y)=\inf\{t>0,\ \Psi(t,y_0)=y\}$ for any $y\in\Gamma(h)$, and denote $\Upsilon=\varphi(y_0)$ which is the
period of the orbit $\Psi(t,y_0)$. Let $S:=\textbf{R}_+\mod\Upsilon$ denote a circle. Then it is difficult to see that $\varphi: \Gamma(h)\rightarrow S$ is a homeomorphism.  By Theorem \ref{theorem:hirsch}, for any $\Lambda(h)\setminus\{O\}$, there are unique $\lambda>0$ and $z\in \Gamma(h)$ such that $y=\lambda z$. Define $\psi:\ \Lambda(h)\setminus\{O\}\rightarrow \textbf{R}\times S$ by
\begin{eqnarray*}
\psi(y):=\Big(\ln\lambda,\ \varphi(z)\Big),\ y\in \Lambda(h)\setminus\{O\}
\end{eqnarray*}
where $y=\lambda z$ with $\lambda >0$ and $z\in \Gamma(h)$.
 It is easy to see that $\psi: \Lambda(h)\setminus\{O\} \rightarrow \textbf{R}\times S$ is a homeomorphism, its inverse is $\psi^{-1}(x,\tau)=e^{x}\Psi(\tau,y_0)$.

\vskip 0.2cm
For any $y=\lambda z\in \Lambda(h)\setminus\{O\}$ with $\lambda >0$ and $z\in \Gamma(h)$,  it follows from  (\ref{decom}) that
$$
 \Phi(t,\omega,y)=g(t,\omega,\lambda)\Psi(\int_0^tg(s,\omega,\lambda)ds,z).
$$
Obviously, $\Psi(\int_0^tg(s,\omega,g_0)ds,z)\in\Gamma(h)$. Set
$$H(t,\omega,H_0)=\ln(g(t,\omega,\lambda))\ {\rm and}\ T(t,\omega,T_0)=\varphi(\Psi(\int_0^tg(s,\omega,\lambda)ds,z).$$
Denote by $H_0$ and $ T_0$ the numbers $\ln \lambda$ and $\varphi(z)$, respectively. Then applying $\rm It\hat{o}$ formula, we have
\begin{equation}\label{Eq-change}
    \begin{array}{l}
        \displaystyle H(t,H_0)=H_0+r\int_0^t(1-e^{H(s,H_0)})ds+\int_0^t\sigma dB_s,\\
        \noalign{\medskip}
        \displaystyle T(t,T_0)=(T_0+\int_0^te^{H(s,H_0)}ds) \mod \Upsilon.\\
    \end{array}
\end{equation}
By the definition,
$$\psi(\Phi(t,\omega,y))=\Big(\ln(g(t,\omega,\lambda)),\ \varphi(\Psi(\int_0^tg(s,\omega,\lambda)ds,z))\Big)=\Big(H(t,\omega,H_0),\ T(t,\omega,T_0)\Big).$$
The ergodicity  for $\Phi$ on $\Lambda(h)\setminus\{O\}$ is equivalent to that $(H,T)$ is ergodic on $\textbf{R}\times  S$.

\vskip 0.2cm
For any metric space $E$, denote by $\mathcal{B}(E)$ the Borel $\sigma$-field and by $\mathcal{B}_b(E)$ the class of bounded measurable functions on $E$, respectively.
Now,
we prove that $(H,T)$ is strong Feller(SF) and irreducible(I) on $\textbf{R}\times  S$, that is,
\begin{itemize}
\item[(SF)] For any $t>0$, and $F\in\mathcal{B}_b(\textbf{R}\times  S)$,
$$
(H_0,T_0)\in\textbf{R}\times  S\rightarrow\mathbb{E}F(H(t,H_0),T(t,T_0))\ \textrm{is\ continuous};
$$
\item[(I)] For any $t>0,\ (H_0,T_0)\in\textbf{R}\times  S$ and open set $A\in\mathcal{B}(\textbf{R}\times  S)$,
$$
\mathbb{P}\Big((H(t,H_0),T(t,T_0))\in A\Big)>0.
$$
\end{itemize}

Consider the following equations
\begin{eqnarray}\label{SF}
&&H(t,H_0)=H_0+r\int_0^t(1-e^{H(s,H_0)})ds+\int_0^t\sigma dB_s,\nonumber\\
&& \widetilde{T}(t,\widetilde{T}_0)=\widetilde{T}_0+\int_0^te^{H(s,H_0)}ds\nonumber,\\
&& (H_0,\widetilde{T}_0)\in \textbf{R}^2.
\end{eqnarray}
By Theorem 4.2 in \cite{DONG}, the semigroup $(\widetilde{P}_t)_{t\geq0}$ associated with (\ref{SF}) is strong Feller on $\textbf{R}^2$, i.e.
for any $t>0$, $f\in\mathcal{B}_b(\textbf{R}^2)$,
$$
(H_0,\widetilde{T}_0)\in \textbf{R}^2\rightarrow \mathbb{E}f(H(t,H_0),\widetilde{T}(t,\widetilde{T}_0))\textrm{ is continuous}.
$$
Hence, for any $F\in\mathcal{B}_b(\textbf{R}\times  S)$, set $f_F(H,\widetilde{T})=F(H, \widetilde{T}\mod \Upsilon)$, we have
$f_F\in \mathcal{B}_b(\textbf{R}^2)$, and then
\begin{eqnarray*}
(H_0,T_0)\in \textbf{R}\times  S\rightarrow &&\mathbb{E}F(H(t,H_0),T(t,T_0))\\
&&=\mathbb{E}f_F(H(t,H_0),\widetilde{T}(t,T_0))\textrm{ is continuous}.
\end{eqnarray*}
This implies that $(H,T)$ is strong Feller on $\textbf{R}\times  S$.

\vskip 0.3cm

Now we prove that $(H,T)$ is irreducible on $\textbf{R}\times  S$. We only need to prove that for any $a,b\in \textbf{R}$ with $a<b$,
$c,d\in S$ with $c<d$ and $A:=(a,b)\times (c,d)$,
$$
\mathbb{P}\Big((H(t,H_0),T(t,T_0))\in A\Big)>0,\ \ \textrm{for any}\ t>0\ {\rm and}\ (H_0,T_0)\in \textbf{R}\times  S.
$$

\vskip 0.2cm
Define the open set
$$\mathcal{A}(c,d;T_0,\Upsilon)=\bigcup_{n=0}^\infty \Big(c+n\Upsilon-T_0,d+n\Upsilon-T_0\Big)   .$$
By (\ref{sl}) and the definition of $H$,
\begin{eqnarray}\label{eq H}
&&\mathbb{P}\Big((H(t,H_0),T(t,T_0))\in A\Big)\\
&=&
\mathbb{P}\Big(
            \Big(\frac{e^{rt+\sigma B_t}}{e^{-H_0}+r\int_0^te^{rs+\sigma B_s}ds},
            \int_0^t \frac{e^{rs+\sigma B_s}}{e^{-H_0}+r\int_0^se^{rl+\sigma B_l}dl} ds\Big)
            \in  (e^a,e^b)\times \mathcal{A}(c,d;T_0,\Upsilon)\Big)
           \nonumber.
\end{eqnarray}

\vskip 0.2cm

Denote
\begin{eqnarray*}
B=\Big\{&& f\in C([0,t],\textbf{R}_+):  f(0)=1,\  \\
&&\Big(\frac{f(t)}{e^{-H_0}+r\int_0^tf(s)ds},\int_0^t \frac{f(s)}{e^{-H_0}+r\int_0^sf(l)dl} ds \Big)
\in (e^a,e^b)\times\mathcal{A}(c,d;T_0,\Upsilon)
  \Big\}.
\end{eqnarray*}
We claim that $B\neq \emptyset$.
In fact, let
\begin{eqnarray*}
\widetilde{B}=\Big\{
     && h\in C([0,t],\textbf{R}_+):\  h(0)=e^{H_{0}},\  \\
     &&\Big(h(t),\int_0^t h(s) ds \Big) \in (e^a,e^b)\times\mathcal{A}(c,d;T_0,\Upsilon)
  \Big\}.
\end{eqnarray*}
Then we first show that $\widetilde{B}\neq \emptyset$.

Since $\frac{e^{H_{0}}+e^{b}}{2}t$ is a given constant,  we define $\widetilde{n}:=\inf\{n: c+n\Upsilon-T_0\geq \frac{e^{H_{0}}+e^{b}}{2}t\}$, which exists. Choose a constant  $\widetilde{h}$ such that the area in the shadow domain of Fig.\ref{fig:h} is the mean value of $c+\widetilde{n}\Upsilon-T_0$ and $d+\widetilde{n}\Upsilon-T_0$. Thus  $\widetilde{h}=\frac{c+d+2\widetilde{n}\Upsilon-2T_{0}}{t}-\frac{e^{H_{0}}}{2}-\frac{e^{a}+e^{b}}{4}$. Let $h$ be defined as  the broken line in Fig.\ref{fig:h}. Then it is easy to see that $h(0)=e^{H_0}, h(t)= \frac{e^a+e^b}{2}\in (e^a,e^b)$ and the integral $\int_{0}^{t}h(s)ds=\frac{c+d}{2}+\widetilde{n}\Upsilon-T_{0}\in (c+\widetilde{n}\Upsilon-T_0, d+\widetilde{n}\Upsilon-T_0)$.
This implies that $h\in \widetilde{B}$.
\begin{figure}[ht]
 \begin{center}
 \includegraphics[width=0.45\textwidth]{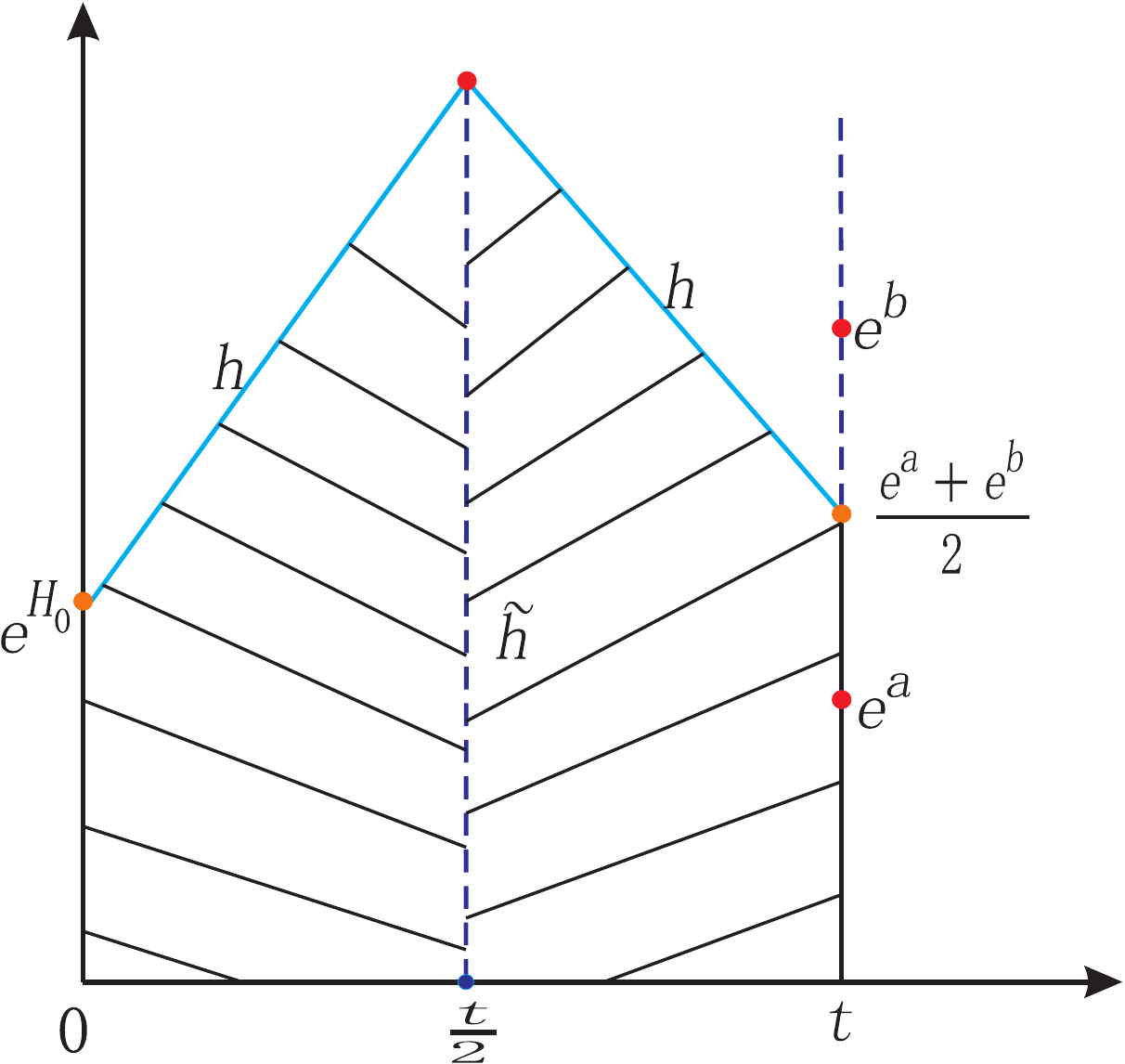}
  \end{center}
\caption{The image of $h$.} \label{fig:h}
\end{figure}

Take $h\in \widetilde{B}$, and let
$$f(s):=e^{-H_{0}}h(s)+re^{-H_{0}}h(s)\int_{0}^{s}h(l)e^{r\int_{l}^{s}h(\tau)d\tau}dl,\ s\in[0,t]
.$$
Then $f\in C([0,t],\textbf{R}_+)$ and
$\int_{0}^{s}f(l)dl=e^{-H_{0}}\int_{0}^{s}h(l)e^{r\int_{l}^{s}h(\tau)d\tau}dl$.
It is clear that $f(0)=h(0)e^{-H_{0}}=1$, $\frac{f(t)}{e^{-H_0}+r\int_0^tf(s)ds}=h(t)\in (e^a,e^b)$
and $$\int_0^t \frac{f(s)}{e^{-H_0}+r\int_0^sf(l)dl} ds=\int_0^t h(s) ds
                  \in
                \mathcal{A}(c,d;T_0,\Upsilon) . $$
This implies that $f\in B$, that is, $B\neq \emptyset$. From the above construction, we know that $f:[0,t]\rightarrow {\rm Int} \textbf{R}_+$.

Assume that $\widetilde{f}\in B$ such that $\widetilde{f}:[0,t]\rightarrow {\rm Int} \textbf{R}_+$. Then we define the map $L:\ C([0,t],\textbf{R}_+)\rightarrow \textbf{R}^2$ by
$$
L(f)=\Big(\frac{f(t)}{e^{-H_0}+r\int_0^tf(s)ds},\ \int_0^t \frac{f(s)}{e^{-H_0}+r\int_0^sf(l)dl} ds\Big).
$$
Then it is easy to see that $L:\ C([0,t],\textbf{R}_+)\rightarrow \textbf{R}^2$ is continuous.
Thus the set $B=L^{-1}\Big( (e^a,e^b)\times\mathcal{A}(c,d;T_0,\Upsilon)  \Big)$ is an open set containing $\widetilde{f}$.
This shows that there exists $\epsilon>0$ such that $C_\epsilon^{\widetilde{f}}=\{g\in C([0,t],\textbf{R}_+),\ g(0)=1,\ \sup_{s\in[0,t]}|g(s)-\widetilde{f}(s)|<\epsilon\}\subset B$. Then there exists an open set $D$ in the space $\{p\in C([0,t],\textbf{R}),\ p(0)=0\}$ with sup norm such that
$$
e^{r\cdot+\sigma p(\cdot)}\in C_\epsilon^{\widetilde{f}},\ \ \forall p\in D.
$$
By (\ref{eq H}),
\begin{eqnarray}
\mathbb{P}\Big((H(t,H_0),T(t,T_0))\in A\Big)
\geq
\mathbb{P}\Big(B(\cdot,\omega)\in D \Big)>0.
\end{eqnarray}
The second inequality follows from the fact of Classical Wiener space (see e.g. \cite{Nua,Shi}).
This implies that $(H,T)$ is irreducible on $\textbf{R}\times  S$ and that $\Phi$ is ergodic  on $\Lambda(h)\setminus\{O\}$. Furthermore, combining \cite[Theorem 3.2.4(iii)]{Da} and the fact that $\nu_h^{\sigma}$ takes zero measure at the origin (see Theorem \ref{lsss}),  $\Phi$ is also ergodic on $\Lambda(h)$. Again using \cite[Theorem 3.2.4(iii)]{Da}, we obtain that $\nu_h^{\sigma}$  is an ergodic stationary measure for $\Phi$ on $\textbf{R}_+^3$.

Finally, applying Corollary \ref{mainc}, we conclude that  $\nu_h^\sigma$ converges weakly to the Haar measure on the closed orbit $\Gamma(h)$ as $\sigma \rightarrow 0.$
\end{proof}

Theorems \ref{pdfc} and \ref{smucone} have given all ergodic stationary measures for all classes except class 27c). From ergodic decomposition theorem \cite[$\S$1.2]{Sko}, every stationary measure is expressed by ergodic stationary measures, which is stated in the following.
 \begin{theorem}\label{ergodic decomp}
 Assume that $\theta=0$ and the competitive parameters inequalities in class 26 b) or class 27 b) hold. Let $\mathcal{E}^{26}=\{O,P,V_1, V_2, R_1, R_2, R_3\}$ and $\mathcal{E}^{27}=\{O,P, R_1, R_2, R_3\}$ denote the equilibria set of the classes 26 and 27, respectively. Then the ergodic stationary measure set is
 $$\mathcal{M}^e(\Phi)=\{\nu_h^{\sigma}:h\in I\}\bigcup \{\mu_Q^{\sigma}:Q\in \mathcal{E}^{i}\}, \ \  i=26, 27.$$
 There exists a probability measure $\nu_{\mu}$ on $\mathcal{M}^e(\Phi)$ such that
  $$\mu(\cdot)=\int_{\mathcal{M}^e(\Phi)}\eta(\cdot)d\nu_{\mu}(\eta)$$
  for any stationary measure $\mu$ of $\Phi$.
 \end{theorem}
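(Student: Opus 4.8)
The plan is to derive the integral representation from the abstract ergodic decomposition theorem \cite[$\S$1.2]{Sko} once the set of ergodic stationary measures has been pinned down exactly, so the whole theorem reduces to an identification problem. First I would record that the set $\mathcal{M}(\Phi)$ of all stationary measures of $\{P_t\}$ is nonempty, convex and weakly compact: existence comes from Theorem \ref{lsss}, the uniform second-moment bound $\mathbb{E}\|\Phi(t,\omega,y)\|^2\leq M$ established there together with Chebyshev gives tightness (cf. Proposition \ref{tightp}), dissipativity (Theorem \ref{theorem:hirsch}) confines every stationary measure to a neighbourhood of $\Sigma\cup\{O\}$, and closedness under weak limits is standard for the Feller semigroup. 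The extreme points of this compact convex set are precisely the ergodic stationary measures, and the ergodic decomposition theorem then produces, for each $\mu\in\mathcal{M}(\Phi)$, a probability measure $\nu_\mu$ supported on the extreme points with $\mu(\cdot)=\int\eta(\cdot)\,d\nu_\mu(\eta)$. Hence everything hinges on proving $\mathcal{M}^e(\Phi)=\{\nu_h^\sigma:h\in I\}\cup\{\mu_Q^\sigma:Q\in\mathcal{E}^i\}$.

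That each listed measure is ergodic is already available: $\mu_Q^\sigma$ is ergodic on $\mathcal{A}(Q)$ by Theorems \ref{pdfc} and \ref{bwce}, and $\nu_h^\sigma$ is ergodic on $\Lambda(h)$ by the strong-Feller-plus-irreducibility argument in Theorem \ref{smucone}. The core of the proof is completeness: any ergodic stationary measure $m$ must coincide with one of these. Here I would use that the cones $\Lambda(h)$, the ray $L(P)=\mathcal{A}(P)$, and the attracting domains $\mathcal{A}(Q)$ of the boundary equilibria are all $\Phi$-invariant (Cone Invariance, Theorem \ref{cinav}, together with (\ref{attra})) and, by Theorem \ref{thm4.14}, partition $\mathbf{R}_+^3\setminus\{O\}$. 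Since $\mathrm{Int}\,\mathbf{R}_+^3$ and $\partial\mathbf{R}_+^3$ are separately invariant, ergodicity forces $m$ to be concentrated on one of them.

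For $m$ concentrated in the interior, the decisive observation is that the first integral $V$ in (\ref{invariance27}) is homogeneous of degree $0$: the exponents satisfy $\mu+\nu+\omega=-(\beta_2\beta_3+\alpha_1\beta_2+\alpha_1\alpha_3)/D=-1$, so the power product has degree $-1$ and the linear factor degree $1$. Combined with the decomposition formula $\Phi(t,\omega,y)=g(t,\omega,1)\Psi(\int_0^tg(s,\omega,1)ds,y)$ (see (\ref{sdfs})) and the invariance of $V$ along $\Psi$, this gives $V(\Phi(t,\omega,y))=V(\Psi(\int_0^tg\,ds,y))=V(y)$, so $V$ is a \emph{conserved quantity} for $\Phi$. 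Every level set $V^{-1}(h)$ is then invariant, and ergodicity forces the pushforward $m\circ V^{-1}$ to be a Dirac mass $\delta_{h_0}$ on $I$; thus $m$ lives on the single cone $V^{-1}(h_0)$. If $h_0$ is the extremal value attained at $P$, then $m=\mu_P^\sigma$ by the uniqueness part of Theorem \ref{pdfc}; otherwise $V^{-1}(h_0)=\Lambda(h_0)$ and $m=\nu_{h_0}^\sigma$ by the uniqueness in Theorem \ref{smucone}. For $m$ concentrated on $\partial\mathbf{R}_+^3$, each invariant coordinate face carries a lower-dimensional competitive system whose trajectories all tend to boundary or axial equilibria; restricting $\Phi$ to the invariant faces and axes and reapplying Theorem \ref{bwce} identifies $m$ with $\mu_Q^\sigma$ for a boundary equilibrium $Q\in\mathcal{E}^i\setminus\{P\}$ (with $O$ yielding $\mu_O^\sigma=\delta_O$). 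This exhausts all cases.

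I expect the completeness step to be the main obstacle, and within it two delicate points. The first is justifying that $V$ is genuinely conserved by the stochastic flow and that its interior level sets are exactly the single cones $\Lambda(h)$, so that no ergodic measure can smear across several leaves; this is precisely where the degree-$0$ homogeneity of $V$ is indispensable, since it is what lets the logistic scaling factor $g(t,\omega,1)$ drop out. The second is the boundary bookkeeping, where one must check that the lower-dimensional faces contribute no ergodic measures beyond the $\mu_Q^\sigma$ with $Q$ a boundary equilibrium, matching $\mathcal{E}^{26}$ and $\mathcal{E}^{27}$ respectively. Once $\mathcal{M}^e(\Phi)$ is identified, the representation is a direct invocation of \cite[$\S$1.2]{Sko}, the measurability of $\eta\mapsto\eta(\cdot)$ on the standard Borel set $\mathcal{M}^e(\Phi)$ being routine.
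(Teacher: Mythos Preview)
Your proposal is correct and follows the same skeleton the paper indicates in the single sentence preceding the theorem: Theorems~\ref{pdfc} and~\ref{smucone} supply the listed ergodic measures, and the representation is then a direct invocation of the abstract ergodic decomposition theorem \cite[\S1.2]{Sko}. The paper offers no further argument, so your completeness step---showing that \emph{every} ergodic stationary measure is already on the list---actually fills in what the paper leaves implicit. Your observation that $V$ in (\ref{invariance27}) is homogeneous of degree~$0$ (since $\mu+\nu+\omega=-1$) and is therefore conserved by $\Phi$ via the decomposition formula is precisely the device that makes the cone invariance of Theorem~\ref{cinav} do the work; the paper never states this explicitly.

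One point deserves care in class 26~b). There the interior of the heteroclinic cycle on $\Sigma$ is foliated by the $\Gamma(h)$, $h\in I$, but Theorem~\ref{thm4.14}(a) says there is also an ``outside'' region whose trajectories converge to boundary equilibria. Your sentence ``otherwise $V^{-1}(h_0)=\Lambda(h_0)$ and $m=\nu_{h_0}^\sigma$'' tacitly assumes $h_0\in I$; for $h_0$ in the range of $V$ over the outside region no $\nu_{h_0}^\sigma$ is defined. The patch is immediate and uses exactly the ingredients you already listed: that region is contained in $\bigcup_Q\mathcal{A}(Q)$ for boundary $Q$, each $\mathcal{A}(Q)$ is $\Phi$-invariant, and by Theorem~\ref{pdfc} the unique stationary measure on $\mathcal{A}(Q)$ is $\mu_Q^\sigma$, supported on $L(Q)\subset\partial\mathbf{R}_+^3$. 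Hence an ergodic $m$ with $m({\rm Int}\,\mathbf{R}_+^3)=1$ must give each such $\mathcal{A}(Q)$ measure zero, forcing $h_0\in I\cup\{V(P)\}$. For class 27~b) the issue does not arise, since there the interior minus $L(P)$ is exhausted by the cones $\Lambda(h)$.
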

 \begin{remark} We can express all stationary measures more precisely.

Define $L^\sigma:\ I\bigcup \mathcal{E}\rightarrow \mathcal{M}^e(\Phi)$ as
\begin{eqnarray*}
 L^\sigma:&&\ h\in I\rightarrow \nu_h^{\sigma}\\
          &&\ Q\in\mathcal{E}\rightarrow \mu_Q^{\sigma}.
\end{eqnarray*}
Then $L^\sigma$ is a bijective mapping. Set
$$
\mathcal{A}=\Big\{\{\vartheta\in I\bigcup \mathcal{E},\ L^\sigma(\vartheta)\in O\},\ \ \forall O\in \mathcal{B}(\mathcal{M}^e(\Phi))\Big\}.
$$
For the above probability measure $\nu_{\mu}$ on $\mathcal{M}^e(\Phi)$, let
$$
m_\mu\Big(\{\vartheta\in I\bigcup \mathcal{E},\ L^\sigma(\vartheta)\in O\}\Big):= \nu_{\mu}(O),\ \ \ \forall O\in \mathcal{B}(\mathcal{M}^e(\Phi)).
$$
Then $m_\mu$ is a probability measure on $(I\bigcup \mathcal{E}, \mathcal{A})$, and
$$
\mu(\cdot)=\int_{I\bigcup \mathcal{E}}L^\sigma(\vartheta)(\cdot)m_{\mu}(d \vartheta).
$$

 \end{remark}

\begin{theorem}\label{speriodic}
 Assume that $\theta=0$ and the competitive parameters inequalities in class 26 b) or class 27 b) hold. Let $\mu^i:= \nu_{h^i}^{\sigma^i},\ i= 1,2,\cdots$ satisfy $\sigma^i\rightarrow 0$ and $\mu^i \stackrel{w}{\rightarrow}\mu$ as $i\rightarrow \infty$, where $\nu_{h^i}^{\sigma^i}$ is the unique ergodic nontrivial stationary measure supporting on the cone $\Lambda(h^i)$. Suppose that each $\Gamma(y_0^i)$ is the closed orbit generating the cone $\Lambda(h^i), \ i= 1,2,\cdots$ and that  $y_0^i \rightarrow y_0$ as $i\rightarrow \infty$. Then if $y_0$ lies in the interior of the heteroclinic cycle $\mathcal{H}$, then $\mu$ is the Haar measure on $\Gamma(y_0)$ for $y_0\neq P$, or the Dirac measure $\delta_P(\cdot)$ at $P$ for $y_0= P$. If $y_0\in \mathcal{H}$, then
\begin{equation}\label{equisupp1}
  \mu(\{E_1, E_2, E_3\})=1,
  \end{equation}
where $E_1, E_2, E_3$ are three equilibria of heteroclinic cycle $\mathcal{H}$ in class 26 b) or class 27 b).
 \end{theorem}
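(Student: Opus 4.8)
The plan is to combine the weak-limit structure already available from Corollary \ref{mainc} with a purely geometric localisation of the support of $\mu$ onto the Hausdorff limit of the closed orbits $\Gamma(y_0^i)$, and then to identify which invariant measures can live on that limit set. First I would record what comes for free: since $\sigma^i\rightarrow 0$, $y_0^i\neq O$ and $\mu^i\stackrel{w}{\rightarrow}\mu$, Corollary \ref{mainc} tells us that $\mu$ is an invariant probability measure for the deterministic flow $\Psi$, that its support lies in the Birkhoff centre $B(\Psi)$, and that $\mu(\Sigma)=1$. Moreover, by Theorem \ref{smucone} each approximating measure is carried by the invariant cone it was built on, i.e. ${\rm supp}\,\mu^i\subseteq\Lambda(h^i)$, the cone over the closed orbit $\Gamma(y_0^i)$.

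The decisive input is the foliation structure of Theorem \ref{thm4.14}: the interior of the heteroclinic region on $\Sigma$ is filled by the closed orbits $\{\Gamma(h):h\in I\}$, which shrink to the centre $P$ at one end of the parameter interval and converge, in the Hausdorff metric, to the heteroclinic cycle $\mathcal{H}$ at the other end. Because $y_0^i\rightarrow y_0$, continuous dependence on initial data together with this structure yields a set-limit $\Gamma_\infty$ of the orbits $\Gamma(y_0^i)$: it equals the single periodic orbit $\Gamma(y_0)$ when $y_0\in{\rm int}\,\mathcal{H}$ with $y_0\neq P$, it equals $\{P\}$ when $y_0=P$, and it equals $\mathcal{H}$ when $y_0\in\mathcal{H}$. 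Fix any neighbourhood $N$ of $\Gamma_\infty$ in $\Sigma$. The convergence forces $\Gamma(y_0^i)\subseteq N$ for all large $i$, hence ${\rm supp}\,\mu^i\subseteq\Lambda(h^i)\subseteq\overline{\Lambda(N)}$, where $\Lambda(N)$ is the open cone over $N$. Thus $\mu^i(\overline{\Lambda(N)})=1$ for large $i$, and the Portmanteau theorem applied to the closed set $\overline{\Lambda(N)}$ gives
$$\mu\big(\overline{\Lambda(N)}\big)\ \geq\ \limsup_{i\to\infty}\mu^i\big(\overline{\Lambda(N)}\big)=1.$$
Intersecting with $\Sigma$ and using $\mu(\Sigma)=1$ yields $\mu(\overline{N}\cap\Sigma)=1$; letting $N$ shrink to $\Gamma_\infty$ and invoking continuity of $\mu$ from above, I obtain $\mu(\Gamma_\infty)=1$, that is ${\rm supp}\,\mu\subseteq\Gamma_\infty$.

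It then remains to identify $\mu$ on $\Gamma_\infty$. If $y_0\in{\rm int}\,\mathcal{H}$ and $y_0\neq P$, then $\Gamma_\infty=\Gamma(y_0)$ is one periodic orbit; parametrising it by the flow time $s\in[0,\Upsilon)$ through $\Psi(s,y_0)$, the flow acts by translation $s\mapsto s+t$, whose unique invariant probability measure is normalised Lebesgue measure, i.e. the Haar measure on $\Gamma(y_0)$. If $y_0=P$, then $\Gamma_\infty=\{P\}$ forces $\mu=\delta_P$. Finally, if $y_0\in\mathcal{H}$, then ${\rm supp}\,\mu\subseteq\mathcal{H}$, while by Proposition \ref{PRe} the support also lies in $B(\Psi)$; since every point of a heteroclinic connection has distinct $\alpha$- and $\omega$-limits and is therefore non-recurrent, the only recurrent points of $\mathcal{H}$ are its three equilibria $E_1,E_2,E_3$. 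Hence ${\rm supp}\,\mu\subseteq\{E_1,E_2,E_3\}$, which is exactly (\ref{equisupp1}).

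The hard part will be the boundary case $y_0\in\mathcal{H}$. There the periods of $\Gamma(y_0^i)$ blow up as the orbits approach $\mathcal{H}$, so continuous dependence over a fixed time interval does not by itself deliver the set-convergence $\Gamma(y_0^i)\rightarrow\mathcal{H}$; that convergence must instead be extracted from the global foliation of Theorem \ref{thm4.14}, equivalently from the level sets of the first integral $V$ in (\ref{invariance27}) together with $h^i=V(y_0^i)\rightarrow V(y_0)$. Once this set-convergence is granted, the Portmanteau localisation and the recurrence argument are routine, and the only remaining care is in passing from statements about the cones $\Lambda(\cdot)$ to statements on $\Sigma$, which the identity $\mu(\Sigma)=1$ handles.
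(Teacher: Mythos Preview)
Your proof is correct and follows the same overall strategy as the paper: localise the support of $\mu$ via Portmanteau onto (the cone over) the limiting orbit, invoke $\mu(\Sigma)=1$ from Corollary \ref{mainc}, and then identify the invariant measures that can live on the resulting set. The difference is in the packaging. The paper carries out the localisation by a nested-disk argument: after passing to a monotone subsequence (e.g.\ $y_0^i$ in the interior of $\Gamma(y_0^{i+1})$), it applies the open-set direction of Portmanteau to show $\mu(\Lambda(D_i)\setminus\{O\})=0$ for the open disks $D_i$ bounded by $\Gamma(y_0^i)$, and the closed-set direction to show $\mu(\Lambda(\overline{D_0}))=1$, and then reads off the recurrent points on the boundary. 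You instead go through Hausdorff convergence $\Gamma(y_0^i)\to\Gamma_\infty$ and shrinking neighbourhoods $N\downarrow\Gamma_\infty$, which handles all subcases (inside, outside, on $\Gamma(y_0)$) uniformly and avoids the case splits the paper makes in the interior regime. Your route is tidier, but it pushes the work into justifying the set-convergence $\Gamma(y_0^i)\to\mathcal{H}$ when $y_0\in\mathcal{H}$, which---as you correctly flag---must come from the first integral $V$ in (\ref{invariance27}) rather than from finite-time continuous dependence; the paper's monotone-foliation argument sidesteps this by never needing an explicit Hausdorff limit, using only the order relation among the nested $D_i$.
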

 \begin{proof}
 Let $\mu^i:= \nu_{h^i}^{\sigma^i},\ i= 1,2,\cdots$ satisfy $\sigma^i\rightarrow 0$ and $\mu^i \stackrel{w}{\rightarrow}\mu$ as $i\rightarrow \infty$. Suppose that each $\Gamma(y_0^i)$ is the closed orbit generating the cone $\Lambda(h^i), \ i= 1,2,\cdots$ and that  $y_0^i \rightarrow y_0$ as $i\rightarrow \infty$. We first consider the case that $y_0$ lies in the interior of $\mathcal{H}$ on $\Sigma$ with $y_0\neq P$. If there is a subsequence of $\{y_0^i\}$ lying on $\Gamma(y_0)$,  then Theorem \ref{smucone} implies that $\mu$ is the Haar measure on $\Gamma(y_0)$. Otherwise, we suppose that  all points in $\{y_0^i\}$ are different. If $\{y_0^i\}$ are in the interior of $\Gamma(y_0)$ on $\Sigma$, then we may assume that $y_0^i$ lies in the interior of  $\Gamma(y_0^{i+1})$ on  $\Sigma$ for $i=1,2,\cdots.$ Thus, the first part result deduces that $\mu^{i}(\Lambda(\Gamma(y_0^{i})))=1$ for $i=1,2,\cdots.$ Let $D_i$ and $D_0$ denote the interior of the closed orbits $\Gamma(y_0^{i})$ and $\Gamma(y_0)$ on $\Sigma$, respectively. Then $\mu^k(\Lambda(D_i))=0$ for $1\leq i\leq k$. However, $\Lambda(D_i)\setminus\{O\}$ is an open subset in $\mathbf{R}_+^3$. For each $i\geq 1$, it follows from the Portmanteau theorem (see \cite[Theorem 2.1(iv)]{BILL}) that
\begin{equation}\label{3emo}
\mu(\Lambda(D_i)\setminus\{O\})\leq \liminf_{k\rightarrow \infty}\mu^k(\Lambda(D_i)\setminus\{O\})=0.
\end{equation}
In addition, $\mu(\{O\})=0$ by Corollary \ref{mainc}. This proves that $\mu(\Lambda(D_i))=0$ for each $i\geq 1$. Using the continuity of probability measure, $\mu(\Lambda(D_0))=0$. Again utilizing the Portmanteau theorem (see \cite[Theorem 2.1(iii)]{BILL}) that $\mu(\Lambda(\overline{D_0}))=1.$  Hence $\mu(\Lambda(\Gamma(y_0)))=1$. Since the recurrent points on
$\Lambda(\overline{D_0})$ is $\Gamma(y_0)\cup \{O\}$, $\mu$ is the Haar measure on $\Gamma(y_0).$  The case that $y_0$ lies outside of  $\Gamma(y_0)$ on $\Sigma$ can be treated analogously.

Secondly, we assume that $y_0=P$, $y_0^i\neq P$ for each $i$, and  that $y_0^i$ lies in the interior of  $\Gamma(y_0^{i-1})$ on  $\Sigma$ for $i=2,3,\cdots.$  Then $\mu^k(\Lambda(\overline{D_i}))=1$ for $1\leq i\leq k$.
The Portmanteau theorem (see \cite[Theorem 2.1(iii)]{BILL}) implies that $\mu(\Lambda(\overline{D_i}))=1$  for each given $i$. $\mu(L(P))=1$ follows from the continuity of the probability measure $\mu$.

Thirdly, suppose $y_0\in \mathcal{H}$. Then without loss of generality, we may assume that $y_0^i \in D_{i+1}$ for each $i$. By a similar way, we can obtain (\ref{3emo}) and $\mu(\Lambda(D_i))=0$ for $i=1,2,\cdots.$ Let $D^*$ denote the interior of $\mathcal{H}$ on $\Sigma$. Then  $\mu(\Lambda(D^*))=0$. Applying \cite[Theorem 2.1(iii)]{BILL}, we conclude that $\mu(\Lambda(D^*)\cup \Lambda(\mathcal{H}))=1$, and hence that  $\mu(\Lambda(\mathcal{H}))=1$. It is not difficult to see that the recurrent points on $\mathcal{H}$ are $\{E_1, E_2, E_3, O\}$. Consequently, (\ref{equisupp1}) follows from Corollary \ref{mainc}.
The proof is complete.
\end{proof}

 \begin{theorem}\label{sheterclinic}
Assume that $\theta>0$ and the competitive parameters inequalities in class 27 c) hold. Then $\nu_{y}^{\sigma}$ will support on the three nonnegative axes for any $\nu_{y}^{\sigma}$ with $y\in {\rm Int}\mathbf{R}_+^3\backslash L(P)$.
Let $\mu^i:= \nu_{y_0^i}^{\sigma^i}\in \mathcal{M}_S(\sigma_0) ,\ i= 1,2,\cdots$.
If $\mu^i \stackrel{w}{\rightarrow}\mu$ as $\sigma^i\rightarrow 0$, $i\rightarrow \infty$. Then
\begin{equation}\label{equisupp}
  \mu(\{R_1, R_2, R_3\})=1,
  \end{equation}
where  $R_1, R_2, R_3$ are three axial equilibria for (\ref{3DLV}).
\end{theorem}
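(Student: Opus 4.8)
The plan is to prove the two assertions separately: the first by sharpening the support estimate of Theorem~\ref{ssm} from $\Lambda(\mathcal{H})$ down to the three axes, and the second by a soft Portmanteau argument built on top of Corollary~\ref{mainc}. First I would fix $y\in{\rm Int}\mathbf{R}_+^3\setminus L(P)$. By Theorem~\ref{thm4.14}(c), $\mathcal{A}(\mathcal{H})=\mathbf{R}_+^3\setminus L(P)$, so $\omega_F(y)=\mathcal{H}$ and $\Psi(\cdot,y)$ is bounded; hence $\nu_y^\sigma$ exists (Theorem~\ref{lsss}) and ${\rm supp}(\nu_y^\sigma)\subset\Lambda(\mathcal{H})$ (Theorem~\ref{ssm}). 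Writing $L(R_1)\cup L(R_2)\cup L(R_3)$ for the three nonnegative axes and letting $N_\epsilon\subset\Sigma$ be the union of the closed $\epsilon$-balls about $R_1,R_2,R_3$, it suffices to prove $\nu_y^\sigma(\Lambda(N_\epsilon^c))=0$ for every small $\epsilon>0$. Indeed $\nu_y^\sigma(\Lambda(N_\epsilon))=1$ then follows (the origin has $\nu_y^\sigma$-measure zero by Theorem~\ref{lsss}), and since $\bigcap_{\epsilon>0}\Lambda(N_\epsilon)$ equals the three axes, continuity of measure along this decreasing family gives $\nu_y^\sigma$ full mass on the axes.

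The core is the estimate of $\limsup_{T\to\infty}\frac1T\int_0^T P(t,y,\Lambda(N_\epsilon^c))\,dt$. Using the decomposition formula (\ref{sdfs}) and cone invariance, $\Phi(t,\omega,y)\in\Lambda(N_\epsilon^c)$ iff $\Psi(a(t,\omega),y)\in\Lambda(N_\epsilon^c)$, where $a(t,\omega):=\int_0^t g(s,\omega,1)\,ds$, so that
\[
\frac{1}{T}\int_0^T P(t,y,\Lambda(N_\epsilon^c))\,dt
=\mathbb{E}\,\frac{1}{T}\int_0^T I_{S_\epsilon}\big(a(t,\omega)\big)\,dt,
\qquad
S_\epsilon:=\{u\ge 0:\Psi(u,y)\in\Lambda(N_\epsilon^c)\}.
\]
The deterministic input is the May--Leonard mechanism that is valid in class $27$\,c): because $\theta>0$ renders $\mathcal{H}$ attracting with ever-increasing residence times near $R_1,R_2,R_3$ and bounded transition times (\cite{May1,JN}), the fraction of time $\Psi(\cdot,y)$ spends away from the vertices vanishes, i.e. $\lim_{A\to\infty}A^{-1}L(S_\epsilon\cap[0,A])=0$. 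To transport this to the time-changed clock I would split $[0,T]$ according to $T_g^\delta(\omega)=\{t:g(t,\omega,1)\le\delta\}$. On $(T_g^\delta)^c$ the Jacobian $1/g$ of the substitution $u=a(t,\omega)$ is bounded by $1/\delta$, so that portion of the inner integral is at most $\delta^{-1}L(S_\epsilon\cap[0,a(T,\omega)])$, which divided by $T$ tends to $0$ almost surely by the density-zero fact together with $a(T,\omega)/T\to1$ from (\ref{sys44.1}); since $a(T,\omega)/T$ is $L^2$-bounded by (\ref{elog}) it is uniformly integrable, so the expectation also tends to $0$. The portion on $T_g^\delta$ is bounded by $T^{-1}L([0,T]\cap T_g^\delta)$, whose expectation converges to $\mu_g((0,\delta])=\int_0^\delta p^\sigma(s)\,ds$ by (\ref{eq time 3}). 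Hence the $\limsup$ is at most $\int_0^\delta p^\sigma(s)\,ds$, and letting $\delta\to0$ gives $0$. Because $\Lambda(N_\epsilon^c)\setminus\{O\}$ is open, Portmanteau then yields $\nu_y^\sigma(\Lambda(N_\epsilon^c))=0$, which proves the first claim.

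I expect the main obstacle to be precisely the unbounded weight $1/g$ generated by the time change: unlike in Theorem~\ref{ssm}, the deterministic trajectory never stays inside the good cone, so the ``bad'' set of times is unbounded and the naive substitution can blow up when $g$ is small. The $\delta$-splitting above is what tames it, trading the small-$g$ region (controlled by the ergodic occupation fraction $\mu_g((0,\delta])$) against the bounded-Jacobian region (controlled by the May--Leonard density-zero property).

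For the limiting-measure assertion I take each $y_0^i\in{\rm Int}\mathbf{R}_+^3\setminus L(P)$, so by the first part every $\mu^i=\nu_{y_0^i}^{\sigma^i}$ is carried by the three axes. By Corollary~\ref{mainc}, the weak limit $\mu$ is invariant for $\Psi$, is supported in the Birkhoff center, and $\mu(\{O\})=0$; in class $27$\,c) the only recurrent points are the equilibria, so $B(\Psi)=\{O,P,R_1,R_2,R_3\}$ and $\mu$ is a convex combination of $\delta_P,\delta_{R_1},\delta_{R_2},\delta_{R_3}$. It remains only to kill the mass at $P$. Choosing an open neighborhood $W$ of $P$ disjoint from the three axes, each $\mu^i(W)=0$, whence Portmanteau gives $\mu(W)\le\liminf_{i\to\infty}\mu^i(W)=0$ and thus $\mu(\{P\})=0$. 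Therefore $\mu(\{R_1,R_2,R_3\})=1$, which is (\ref{equisupp}).
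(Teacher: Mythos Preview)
Your proposal is correct and follows essentially the same approach as the paper: the same $\delta$-splitting on $T_g^\delta$ combined with the May--Leonard density-zero property and Portmanteau for the first assertion, and a Portmanteau/recurrent-point argument for the second. The only organizational differences are that the paper works with tubular neighborhoods $B_\epsilon(C)$ of individual connecting arcs rather than your global complement $\Lambda(N_\epsilon^c)$, and passes to expectations via the event $\Omega_{T_0}=\{\sup_{t\ge T_0}|a(t,\omega)/t-1|\le 1\}$ instead of your uniform-integrability step; for the second part the paper applies Portmanteau directly to the closed union of axes (whose only recurrent points are $\{O,R_1,R_2,R_3\}$) rather than first invoking the full Birkhoff center and then killing the mass at $P$.
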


\begin{proof}
By Theorem \ref {ssm}, ${\rm supp}(\nu_y^{\sigma})\subset \partial \mathbf{R}_+^3$.
In the following, we shall show that ${\rm supp}(\nu_y^{\sigma})=\cup_{j=1}^3\mathbf{R}_+^j,$ where $\mathbf{R}_+^j$ denotes the nonnegative $y^j-$axis for $j=1,2,3$. For this purpose, we only have to prove
\begin{equation}\label{bounary}
\nu_y^{\sigma}(\partial \mathbf{R}_+^3\setminus \cup_{j=1}^3\mathbf{R}_+^j)=0.
\end{equation}

Suppose that $p(p_1,p_2,0)$ and $q(q_1,q_2,0)$ lie on $\mathcal{H}$ such that $p$ is close to $R_1$ and $q$ is close to $R_2$ as far as we wish. Let $C$ denote the trajectory from $p$ to $q$ and $s$ denote the time length for the trajectory to run from $p$ to $q$. Assume that $\epsilon>0$ is sufficiently small and
$$B_{\epsilon}(C):= \{x\in \mathbf{R}_+^3:{\rm dist}(x,C)< \epsilon\}.$$
Since $\Psi(t,y)$ is asymptotic to the heteroclinic cycle $\mathcal{H}$, $\Psi(t,y)$ will enter and then go out of $B_{\epsilon}(C)$ with infinitely many times. By the continuity of $\Psi$ with respect to initial points, the time length from entering $B_{\epsilon}(C)$ to going out of $B_{\epsilon}(C)$ for the trajectory $\Psi(t,y)$ is approximately $s$. However, since $R_1$, $R_2$ and $R_3$ are saddle, the time for $\Psi(t,y)$ to spend in the vicinity of $R_j$ is proportional to the total time elapsed up to that stage $t$ (see the detail estimation in \cite{May1}).
\vskip 0.3cm

Define $t_1^1=\inf\{t\geq0,\ \Psi(t,y)\in B_{\epsilon}(C)\}$, $t_2^1=\inf\{t\geq t_1^1,\ \Psi(t,y)\notin B_{\epsilon}(C)\}$,
$t_1^n=\inf\{t\geq t_2^{n-1},\ \Psi(t,y)\in B_{\epsilon}(C)\}$, $t_2^n=\inf\{t\geq t_1^n,\ \Psi(t,y)\notin B_{\epsilon}(C)\}$, for $n\geq 2$.
Denote $T_2:=\{t\geq0:\ \Psi(t,y)\in B_{\epsilon}(C)\}$. Then
\begin{eqnarray*}\label{eq time 1}
T_2=\cup_{n=1}^\infty [t_1^n,t_2^n].
\end{eqnarray*}
By the above discussion, we have
\begin{eqnarray}\label{eq time 2}
\lim_{T\rightarrow\infty}\frac{L(T_2\cap[0,T])}{L([0,T])}=0.
\end{eqnarray}
Define
$$
T_2^S(\omega):=\{t\geq0:\ \int_0^tg(s,\omega,g_0)ds\in T_2\}
=
\{t\geq0:\ \Psi(\int_0^tg(s,\omega,g_0)ds,y)\in B_{\epsilon}(C)\}.
$$
Since $\int_0^tg(s,\omega,g_0)ds$ is monotonously increasing,
$$
T_2^S(\omega)=\cup_{n=1}^\infty [t_1^{S,n}(\omega),t_2^{S,n}(\omega)]
$$
where $t_i^{S,n}:=\tau(\omega,t_i^n)$ and hence $\int_0^{t_i^{S,n}(\omega)}g(s,\omega,g_0)ds=t_i^n,\ \ i=1,2$. It is easy to see that $t_i^n, t_i^{S,n}\rightarrow \infty$ as $n\rightarrow \infty$.

We have
\begin{eqnarray}\label{eq time 4}
&&L([t_1^{S,n}(\omega),t_2^{S,n}(\omega)])\nonumber\\
&=&
L([t_1^{S,n}(\omega),t_2^{S,n}(\omega)]\cap T_g^\delta(\omega))
+
L([t_1^{S,n}(\omega),t_2^{S,n}(\omega)]\cap (T_g^\delta(\omega))^c)\nonumber\\
&\leq&
L([t_1^{S,n}(\omega),t_2^{S,n}(\omega)]\cap T_g^\delta(\omega))
+
(t_2^n-t_1^n)/\delta.
\end{eqnarray}

For any fixed $T>0$, let $N:={\rm max}\{n: t_2^{S,n}\leq T\}$. Then
$$T_2^S(\omega)\cap[0,T]=\cup_{n=1}^N[t_1^{S,n},t_2^{S,n}]\cup[t_1^{S,N+1},t_2^{S,N+1}\wedge T].$$
Applying (\ref{eq time 4}), we have
\begin{displaymath}
    \begin{array}{rl}
     &L\big(T_2^S(\omega)\cap[0,T]\big)\\
     [2pt]
    =&\sum_{n=1}^NL\big([t_1^{S,n},t_2^{S,n}]\big)+L\big([t_1^{S,N+1},t_2^{S,N+1}\wedge T]\big)\\
     [2pt]
    \leq & \sum_{n=1}^NL\big([t_1^{S,n},t_2^{S,n}]\cap T_g^\delta(\omega)\big)+L\big([t_1^{S,N+1},t_2^{S,N+1}\wedge T]\cap T_g^\delta(\omega)\big)\\
    +& \frac{1}{\delta}[\sum_{n=1}^N(t_2^n-t_1^n)+{\rm max}\{0, \int_0^Tg(s,\omega,g_0)ds-t_1^{N+1}\}] \\
     [1pt]
    \leq& L\big([0,T]\cap T_g^\delta(\omega)\big)+\frac{1}{\delta}L\big([0,\int_0^Tg(s,\omega,g_0)ds]\cap T_2\big).
    \end{array}
\end{displaymath}

For any $\ T_0>0$, denote
$$\Omega_{T_0}=\{\omega\in\Omega:\ \sup_{t\in[T_0,\infty]}|\frac{1}{t}\int_0^tg(s,\omega,g_0)ds-1|\leq 1\}.$$
$\Omega_{T_0}$ is increasing with respect to $T_0$. By (\ref{sys44.1}),
\begin{eqnarray}\label{eq time 5}
\lim_{T_0\rightarrow\infty}\mathbb{P}(\Omega_{T_0})=1.
\end{eqnarray}
For any $\omega\in \Omega_{T_0}$, $T\geq T_0$,
$$
\int_0^Tg(s,\omega,g_0)ds\leq 2T.
$$
By the above estimations, we have
\begin{eqnarray*}
&&\frac{1}{T}\int_0^TI_{B_{\epsilon}(C)}\Big(\Psi(\int_0^tg(s,\omega,g_0)ds,y)\Big)dt\\
&=&
\frac{L(T_2^S(\omega)\cap[0,T])}{T}\\
&\leq&
\frac{L([0,T]\cap T_g^\delta(\omega))}{T}
+
\frac{L([0,\int_0^Tg(s,\omega,g_0)ds]\cap T_2)}{\delta T}\\
&\leq&
\frac{L([0,T]\cap T_g^\delta(\omega))}{T}
+
\frac{L([0,2T]\cap T_2)}{\delta T}.
\end{eqnarray*}
Then
\begin{eqnarray*}
&&\mathbb{E}\Big(\frac{1}{T}\int_0^TI_{B_{\epsilon}(C)}\Big(\Psi(\int_0^tg(s,\omega,g_0)ds,y)\Big)dt\Big)\\
&=&
\mathbb{E}\Big(\frac{L(T_2^S(\omega)\cap[0,T])}{T}\Big)\\
&\leq&
\mathbb{P}\Big((\Omega_{T_0})^c\Big)
+
\mathbb{E}\Big(\frac{L([0,T]\cap T_g^\delta(\omega))}{T}\Big)
+
\frac{L([0,2T]\cap T_2)}{\delta T}.
\end{eqnarray*}
Combining this with (\ref{eq time 2}), (\ref{eq time 3}) and (\ref{eq time 5}),
\begin{eqnarray}
\overline{\lim}_{T\rightarrow\infty}\mathbb{E}\Big(\frac{1}{T}\int_0^TI_{B_{\epsilon}(C)}\Big(\Psi(\int_0^tg(s,\omega,g_0)ds,y)\Big)dt\Big)
\leq
\mu_g((0,\delta]),
\end{eqnarray}
which implies that $$\lim_{T\rightarrow\infty}\mathbb{E}\Big(\frac{1}{T}\int_0^TI_{B_{\epsilon}(C)}\Big(\Psi(\int_0^tg(s,\omega,g_0)ds,y)\Big)dt\Big)=0.$$
We obtain that $\nu_y^{\sigma}\big(\Lambda(B_{\epsilon}(C))\big)=0$, which implies that $\nu_y^{\sigma}$  takes zero measure on the interior of nonnegative $(y_1, y_2)-$plane. Similarly, $\nu_y^{\sigma}$ takes zero measure on the interiors of other two nonnegative planes. This proves that  ${\rm supp}(\nu_y^{\sigma})=\cup_{j=1}^3\mathbf{R}_+^j$.

Suppose that $\sigma^i$, $y_0^i$ and $\mu^i$ satisfy the conditions in the theorem. Then $\mu^i\big(\cup_{j=1}^3\mathbf{R}_+^j\big)=1$. Applying the Portmanteau theorem, we derive that
$$\mu\big(\cup_{j=1}^3\mathbf{R}_+^j\big) \geq \limsup_{i\rightarrow \infty}\mu^i\big(\cup_{j=1}^3\mathbf{R}_+^j\big)=1.$$
All recurrent points for $\Psi$ on $\cup_{j=1}^3\mathbf{R}_+^j$ are $\{O,R_1,R_2,R_3\}$. By Corollary \ref{mainc}, $\mu(\{O\})=0$. So we conclude that $\mu(\{R_1,R_2,R_3\})=1$ by Corollary \ref{mainc}. The proof is complete.
\end{proof}
\begin{remark}

According to Busse et al. \cite{BusseExample,Busse1980science,Busse1980nonlinear}, 27 c) corresponds to the case for turbulence to occur in deterministic system. Theorem \ref{heterclinic} illustrates that almost every pull-back trajectory  cyclically oscillates around the boundary of the stochastic carrying simplex which is characterized by three unstable stationary solutions. Theorem \ref{sheterclinic} only describes the support of stationary measures. Appendix B will show that stochastic turbulence has nonuniqueness and nonergodicity characteristics in the limit of time average of probability measures. We will reveal that the essential reason for both peculiar characteristics is that solutions concentrate around $R_1, R_2, R_3$ very long time (approximately infinite) with probability nearly one.
\end{remark}

\section{Conclusions and Discussion}
This paper has proved the stochastic decomposition formula: every solution process for stochastic Lotka-Volterra  systems with identical intrinsic growth rate is expressed in terms of a solution for the corresponding deterministic Lotka-Volterra system without noise perturbation multiplied by an appropriate solution process of the scalar Logistic equation with the same type noise perturbation. Using this decomposition,  we have shown that every pull-back omega limit set for the considered stochastic Lotka-Volterra  systems is an omega limit set of the corresponding deterministic Lotka-Volterra system  multiplied by the random equilibrium of the scalar stochastic Logistic equation with the same type of noise. This illustrates the interesting dynamics in trajectory of deterministic Lotka-Volterra system is preserved if identical intrinsic growth rate is perturbed by a white noise. Employing the stochastic decomposition formula, the Khasminskii theorem and  the Portmanteau theorem, it is shown that a bounded orbit for deterministic Lotka-Volterra system  deduces the existence of a stationary measure for stochastic Lotka-Volterra  system supported in a lower dimensional cone which consists of all rays connecting the origin and all points in the omega limit set of this orbit.  In particular, an equilibrium $Q$ for deterministic Lotka-Volterra system produces a stationary measure $\mu^\sigma_Q$ for stochastic Lotka-Volterra  system supported in a ray connecting the origin and the equilibrium $Q$, which has a continuous distribution function and weakly converges to the Dirac measure at $Q$ as $\sigma$ vanishes by the Weierstrass theorem. Besides, that a trajectory $\Psi(t,y)$ converges to $Q$ is equivalent to that the pull-back trajectory through $y$ converges to the stationary solution corresponding to $\mu^\sigma_Q$. This means that the probability transition function $P(t,y,A)$ converges to $\mu^\sigma_Q(A)$ for any Borel set $A$ as the time tends to infinity, which helps us to provide the necessary and sufficient conditions for Markov semigroup to have a unique and ergodic stationary measure.  A closed orbit $\Psi(t,y)$ for deterministic Lotka-Volterra system  deduces the existence of stationary measure $\nu^\sigma$ for stochastic Lotka-Volterra  system supported in a two dimensional cone surface with the origin as the vertex decided by this closed orbit, which weakly converges to the Haar measure on the closed orbit as $\sigma$ vanishes. The solutions for stochastic Lotka-Volterra  system are invariant when restricted on this cone surface.  As above, any stationary measure is always not regular. This paper reveals the close connection between the dynamics of deterministic Lotka-Volterra system and long-run behavior for
stochastic Lotka-Volterra  system. This makes us to be able to construct many examples to possess a continuum of stationary measures or multiple isolated stationary measures or even others, which are not obtained by the way of convex combination of them.

Suppose that the deterministic Lotka-Volterra system ${\rm (E_{0})}$  is dissipative. Then we prove that the set of stationary measures with small noise intensity is tight, and that their limiting measures in weak topology are invariant with respect to the flow of  ${\rm (E_{0})}$  as the noise intensity $\sigma$ tends to zero, whose supports are contained in the Birkhoff center of ${\rm (E_{0})}$. This means that on the global attractor of  ${\rm (E_{0})}$ any limiting measure takes the complement of the Birkhoff center measure zero. In the case that ${\rm (E_{0})}$  is competitive, the global attractor is the compact invariant set surrounded by the carrying simplex $\Sigma$ and the boundary of $\mathbf{R}_+^n$.  However, the Birkhoff center consists of the recurrent points in the carrying simplex and the origin. This means that our result gives much more precise description for support of limiting measures than that Huang, Ji, Liu and Yi \cite{HJLY5} have given.

Finally, we provide the complete dynamics classification for three dimensional competitive ${\rm (E_{\sigma})}$ both in pull-back trajectory and in stationary motion. There are exactly 37 dynamic scenarios in terms of competitive coefficients. Among them,  each pull-back trajectory in 34 classes is asymptotically stationary, but possibly different stationary solution for different trajectory in same class. For any given system in these 34 classes, all its stationary measures are the convex combinations of $\{\mu^\sigma_Q: Q\in \mathcal{E}\}$. As $\sigma \rightarrow 0$, all their limiting measures are  the convex combinations of the Dirac measures $\{\delta_Q(\cdot): Q\in \mathcal{E}\}$. Two of the remain classes possess a family of stochastic closed orbits, and there exists a continuum of invariant cone surfaces $\Lambda(h)$ decided by the origin and the closed orbits for the corresponding deterministic Lotka-Volterra system. For each $\Lambda(h)$, the system admits a unique nontrivial ergodic stationary measures $\nu_h^{\sigma}$ supported in it, which weakly converge to the Haar measures of periodic orbits  as the noise intensity tends to zero. In addition,  any limiting measure for a sequence of stationary measures $\nu_{y_0^i}^{\sigma^i},\ i= 1,2,\cdots,$ satisfying $\sigma^i\rightarrow 0$ and $y_0^i \rightarrow \mathcal{H}$ with $y_0^i\in {\rm Int}\mathbf{R}_+^3$, will support in the three equilibria on $\mathcal{H}$. In the final class, the most interesting and complicated one,  almost every pull-back trajectory  cyclically oscillates around the boundary of the stochastic carrying simplex which is characterized by three unstable stationary solutions $u(\theta_t\omega)R_i,\ i=1,2,3$.  The time average probability measure for transition probability function of a solution not passing through the ray connecting the origin and the positive equilibrium of ${\rm (E_{0})}$ does not weakly converge, but has infinite limit measures which are not ergodic and support in three positive axes. As the noise intensity tends to zero, these stationary measures weakly converge to a convex combination of Dirac measures on three unstable axis equilibrium.  We will reveal in the Appendix B that the essential reason for these peculiar characteristics is that solutions concentrate around $R_1, R_2, R_3$ very long time (approximately infinite) with probability nearly one. All these are subject to the turbulent characteristics.  This rigorously proves that a stochastic version for so called statistical limit cycle exists and  that the turbulence in a fluid layer heated from below and rotating about a vertical axis is robust under stochastic disturbances.

Observing Table \ref{biao0},  there are four classes to possess a heteroclinic cycle, which are 26 b), 27 a), 27 b), and
27 c). In the classes 26 b), 27 b), and 27 c), it holds that any limiting measure for a sequence of stationary measures $\nu_{y_0^i}^{\sigma^i},\ i= 1,2,\cdots,$ satisfying $\sigma^i\rightarrow 0$ and $y_0^i \rightarrow \mathcal{H}$ with $y_0^i\in {\rm Int}\mathbf{R}_+^3$, will support in the three equilibria on $\mathcal{H}$. However, in the class 27 a), $\nu_{y}^{\sigma}=\mu^\sigma_P$ for any $y\in {\rm Int}\mathbf{R}_+^3$. What is the reason for this difference? The reason is that in the classes 26 b), 27 b), and 27 c), the heteroclinic cycle $\mathcal{H}$ is either neutrally stable, or asymptotically stable, while in the class 27 a), the heteroclinic cycle $\mathcal{H}$ is unstable. Solutions for stochastic ordinary differential equations (SODEs) is usually defined in the nonnegative time, therefore, its probability transition function is defined in the nonnegative time, which causes $\nu_{y}^{\sigma}=\mu^\sigma_P$ for any $y\in {\rm Int}\mathbf{R}_+^3$. If one considers two-sided Brownian motion, then solutions for SODEs can be defined in the entire real time (see \cite{A}). This consideration permits $T_n\rightarrow -\infty$ in (\ref{0pms}) and may prove the existence of stationary measures in generalized meaning supported in the boundary of the first orthant, which weakly converges to an invariant measure supported in three equilibria on $\mathcal{H}$.

Before finishing this paper, we point out that although all results are presented for Stratonovich stochastic differential equations (\ref{sslv}) they are valid for It\^{o} stochastic differential equations (\ref{slv}) as long as $\sigma^2<2r$.

\begin{acknowledgements} This work was supported
by the National Natural Science Foundation of China (NSFC)(Nos. 11371252, 11271356, 11371041, 11431014, 11401557),
Research and Innovation Project of Shanghai
Education Committee (No. 14zz120),
Key Laboratory of Random Complex Structures and Data Science,
Academy of Mathematics and Systems Science, CAS,
the Fundamental Research Funds for the Central Universities (No. WK0010000048), and Shanghai Gaofeng Project for University Academic Program Development.

The authors are greatly grateful for Professors Renming Song and Zuohuan Zheng for their valuable discussions.
\end{acknowledgements}
\newpage
\section{Appendix A. The Complete Dynamical Classification for both Autonomous and Stochastic Three Dimensional Competitive LV Systems with Identical Intrinsic Growth Rate on the Carrying Simplex}
\begin{center}
  \begin{longtable}{c@{\extracolsep{\fill}}c@{\extracolsep{\fill}}c}
\caption*{Table 1. Description  How to Understand the Dynamics on the Carrying Simplex. \\[4pt]
{\bf Autonomous Case}: The total of $37$ dynamical classes among the $33$ stable nullcline equivalence classes for (\ref{3DLV}), where the parameters $a_{ij}$ and $\Sigma$ are given by a representative system of that class. The notation $\bullet$ and $\circ$ denote an attractor and a repeller on $\Sigma$, respectively, while a saddle on $\Sigma$ is the intersection of its stable and unstable manifolds (We refer to \cite{JN}).
 \\{\bf Stochastic Perturbation Case}:  The carrying simplex $\Sigma$ in  autonomous case is replaced by the fiber $u(\omega)\Sigma~(\omega\in \Omega)$; an equilibrium $Q$, a closed orbit $\Gamma$ and a heteroclinic cycle $\mathcal{H}$ are understood as $u(\omega)Q$, $u(\omega)\Gamma$ and $u(\omega)\mathcal{H}$, respectively. All trajectories are understood pull-back ones.}\\[-2pt]
        \hline
         Class & The Corresponding Parameters   & Phase Portrait in $\Sigma$\\
        \hline
        \endfirsthead
        \caption[]{(continued)}\\
        \hline
        Class & The Corresponding Parameters   & Phase Portrait in $\Sigma$\\
        \hline
&&\\
        \endhead
        \hline
        \endfoot
        \endlastfoot
&&\\
1 &
\begin{tabular}{l}
 {\tiny $a_{11}<a_{21}, a_{11}<a_{31}, a_{22}>a_{12}, a_{22}>a_{32}, a_{33}>a_{13}, a_{33}<a_{23}$}
\end{tabular}
&
    \parbox{2cm}{\vspace{2pt}\includegraphics[width=1.6cm,height=1.4cm]{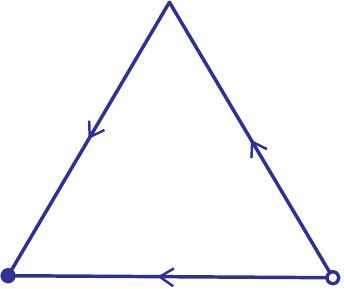}} \\

&&\\
    2 &
\begin{tabular}{l}
 {\tiny(i)~ $a_{11}<a_{21}, a_{11}<a_{31}, a_{22}<a_{12}, a_{22}>a_{32}, a_{33}>a_{13}, a_{33}<a_{23}$}\\
{\tiny(ii)~ $a_{31}(a_{22}-a_{12})+a_{32}(a_{11}-a_{21})-(a_{11}a_{22}-a_{12}a_{21})>0$}
\end{tabular}
 &
    \parbox{2cm}{\vspace{2pt}\includegraphics[width=1.6cm,height=1.4cm]{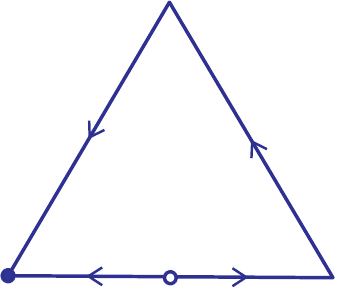}} \\
&&\\
    3 &
\begin{tabular}{l}
 {\tiny(i)~ $a_{11}<a_{21}, a_{11}<a_{31}, a_{22}>a_{12}, a_{22}<a_{32}, a_{33}>a_{13}, a_{33}<a_{23}$}\\
{\tiny(ii)~ $a_{12}(a_{33}-a_{23})+a_{13}(a_{22}-a_{32})-(a_{22}a_{33}-a_{23}a_{32})>0$}
\end{tabular}
 &
    \parbox{2cm}{\vspace{2pt}\includegraphics[width=1.6cm,height=1.4cm]{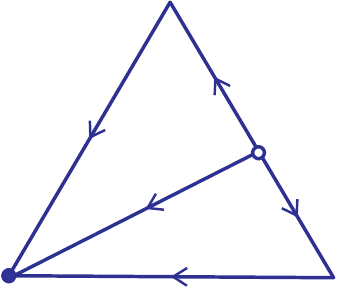}\vspace{2pt}}\\
&&\\
4 &
\begin{tabular}{l}
 {\tiny(i)~ $a_{11}>a_{21}, a_{11}<a_{31}, a_{22}>a_{12}, a_{22}<a_{32}, a_{33}>a_{13}, a_{33}<a_{23}$}\\
{\tiny(ii)~ $a_{12}(a_{33}-a_{23})+a_{13}(a_{22}-a_{32})-(a_{22}a_{33}-a_{23}a_{32})>0$}\\
{\tiny(iii)~$a_{31}(a_{22}-a_{12})+a_{32}(a_{11}-a_{21})-(a_{11}a_{22}-a_{12}a_{21})>0$}
\end{tabular}
 &
    \parbox{2cm}{\vspace{2pt}\includegraphics[width=1.6cm,height=1.4cm]{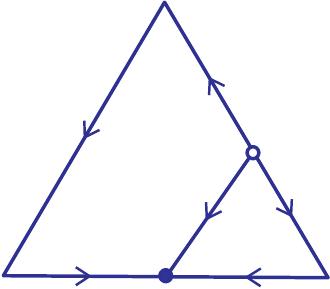}\vspace{2pt}} \\
&&\\
    5 &
\begin{tabular}{l}
 {\tiny(i)~ $a_{11}>a_{21}, a_{11}>a_{31}, a_{22}>a_{12}, a_{22}<a_{32}, a_{33}<a_{13}, a_{33}>a_{23}$}\\
{\tiny(ii)~ $a_{31}(a_{22}-a_{12})+a_{32}(a_{11}-a_{21})-(a_{11}a_{22}-a_{12}a_{21})>0$}
\end{tabular}
 &
    \parbox{2cm}{\vspace{2pt}\includegraphics[width=1.6cm,height=1.4cm]{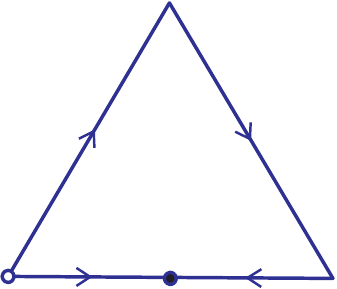}\vspace{2pt}} \\
&&\\
6 &
\begin{tabular}{l}
 {\tiny(i)~ $a_{11}>a_{21}, a_{11}>a_{31}, a_{22}<a_{12}, a_{22}>a_{32}, a_{33}<a_{13}, a_{33}>a_{23}$}\\
{\tiny(ii)~ $a_{12}(a_{33}-a_{23})+a_{13}(a_{22}-a_{32})-(a_{22}a_{33}-a_{23}a_{32})>0$}
\end{tabular}
 &
    \parbox{2cm}{\vspace{2pt}\includegraphics[width=1.6cm,height=1.4cm]{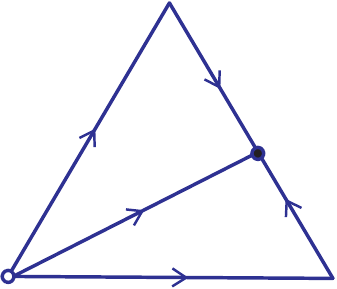}\vspace{2pt}} \\
&&\\
    7 &
\begin{tabular}{l}
 {\tiny(i)~ $a_{11}>a_{21}, a_{11}>a_{31}, a_{22}>a_{12}, a_{22}>a_{32}, a_{33}<a_{13}, a_{33}<a_{23}$}\\
{\tiny(ii)~ $a_{31}(a_{22}-a_{12})+a_{32}(a_{11}-a_{21})-(a_{11}a_{22}-a_{12}a_{21})<0$}
\end{tabular}
 &
    \parbox{2cm}{\vspace{2pt}\includegraphics[width=1.6cm,height=1.4cm]{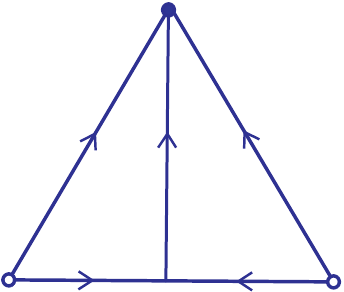}\vspace{2pt}} \\
&&\\
8 &
\begin{tabular}{l}
 {\tiny(i)~ $a_{11}>a_{21}, a_{11}>a_{31}, a_{22}<a_{12}, a_{22}<a_{32}, a_{33}>a_{13}, a_{33}<a_{23}$}\\
{\tiny(ii)~ $a_{12}(a_{33}-a_{23})+a_{13}(a_{22}-a_{32})-(a_{22}a_{33}-a_{23}a_{32})>0$}\\
{\tiny(iii)~$a_{21}(a_{33}-a_{13})+a_{23}(a_{11}-a_{31})-(a_{11}a_{33}-a_{13}a_{31})<0$}
\end{tabular}
 &
    \parbox{2cm}{\vspace{2pt}\includegraphics[width=1.6cm,height=1.4cm]{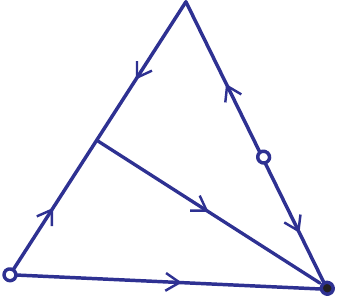}\vspace{2pt}} \\
&&\\
    9 &
\begin{tabular}{l}
 {\tiny(i)~ $a_{11}>a_{21}, a_{11}>a_{31}, a_{22}>a_{12}, a_{22}>a_{32}, a_{33}<a_{13}, a_{33}>a_{23}$}\\
{\tiny(ii)~ $a_{12}(a_{33}-a_{23})+a_{13}(a_{22}-a_{32})-(a_{22}a_{33}-a_{23}a_{32})>0$}\\
{\tiny(iii)~$a_{31}(a_{22}-a_{12})+a_{32}(a_{11}-a_{21})-(a_{11}a_{22}-a_{12}a_{21})<0$}
\end{tabular}
 &
    \parbox{2cm}{\vspace{2pt}\includegraphics[width=1.6cm,height=1.4cm]{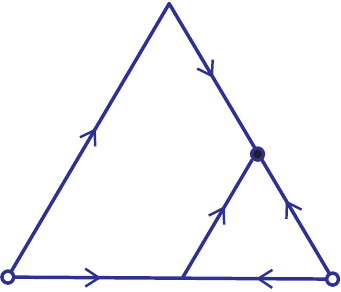}\vspace{2pt}} \\
&&\\
10 &
\begin{tabular}{l}
{\tiny (i)~ $a_{11}>a_{21}, a_{11}>a_{31}, a_{22}>a_{12}, a_{22}>a_{32}, a_{33}<a_{13}, a_{33}>a_{23}$}\\
{\tiny(ii)~ $a_{12}(a_{33}-a_{23})+a_{13}(a_{22}-a_{32})-(a_{22}a_{33}-a_{23}a_{32})<0$}\\
{\tiny(iii)~$a_{31}(a_{22}-a_{12})+a_{32}(a_{11}-a_{21})-(a_{11}a_{22}-a_{12}a_{21})>0$}
\end{tabular}
 &
    \parbox{2cm}{\vspace{2pt}\includegraphics[width=1.6cm,height=1.4cm]{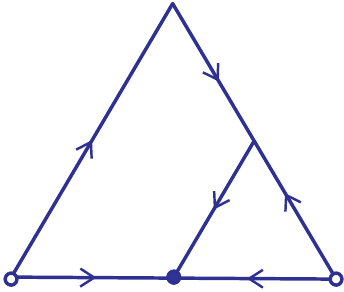}\vspace{2pt}} \\
&&\\
    11 &
\begin{tabular}{l}
 {\tiny(i)~ $a_{11}>a_{21}, a_{11}>a_{31}, a_{22}>a_{12}, a_{22}<a_{32}, a_{33}>a_{13}, a_{33}<a_{23}$}\\
{\tiny(ii)~ $a_{12}(a_{33}-a_{23})+a_{13}(a_{22}-a_{32})-(a_{22}a_{33}-a_{23}a_{32})>0$}\\
{\tiny(iii)~$a_{21}(a_{33}-a_{13})+a_{23}(a_{11}-a_{31})-(a_{11}a_{33}-a_{13}a_{31})<0$}\\
{\tiny(iv)~$a_{31}(a_{22}-a_{12})+a_{32}(a_{11}-a_{21})-(a_{11}a_{22}-a_{12}a_{21})>0$}
\end{tabular}
 &
    \parbox{2cm}{\vspace{2pt}\includegraphics[width=1.6cm,height=1.4cm]{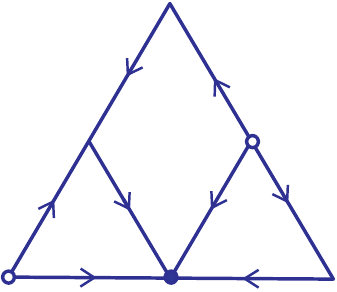}\vspace{2pt}} \\
&&\\
12 &
\begin{tabular}{l}
 {\tiny(i)~ $a_{11}>a_{21}, a_{11}>a_{31}, a_{22}>a_{12}, a_{22}>a_{32}, a_{33}>a_{13}, a_{33}>a_{23}$}\\
{\tiny(ii)~ $a_{12}(a_{33}-a_{23})+a_{13}(a_{22}-a_{32})-(a_{22}a_{33}-a_{23}a_{32})<0$}\\
{\tiny(iii)~$a_{21}(a_{33}-a_{13})+a_{23}(a_{11}-a_{31})-(a_{11}a_{33}-a_{13}a_{31})<0$}\\
{\tiny(iv)~$a_{31}(a_{22}-a_{12})+a_{32}(a_{11}-a_{21})-(a_{11}a_{22}-a_{12}a_{21})>0$}
\end{tabular}
 &
    \parbox{2cm}{\vspace{2pt}\includegraphics[width=1.6cm,height=1.4cm]{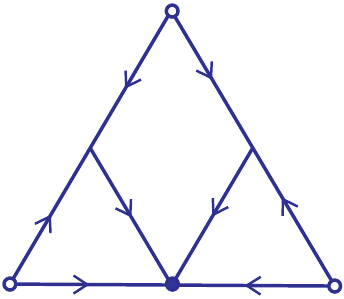}\vspace{2pt}} \\
&&\\
    13 &
\begin{tabular}{l}
 {\tiny(i)~ $a_{11}>a_{21}, a_{11}>a_{31}, a_{22}<a_{12}, a_{22}<a_{32}, a_{33}<a_{13}, a_{33}<a_{23}$}\\
{\tiny(ii)~ $a_{12}(a_{33}-a_{23})+a_{13}(a_{22}-a_{32})-(a_{22}a_{33}-a_{23}a_{32})<0$}
\end{tabular}
 &
    \parbox{2cm}{\vspace{2pt}\includegraphics[width=1.6cm,height=1.4cm]{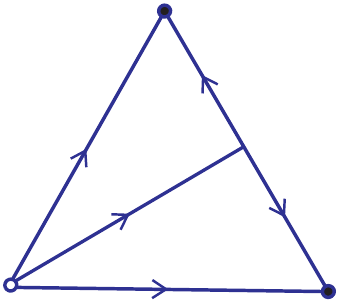}\vspace{2pt}} \\
&&\\
14 &
\begin{tabular}{l}
{\tiny (i)~ $a_{11}>a_{21}, a_{11}>a_{31}, a_{22}<a_{12}, a_{22}<a_{32}, a_{33}>a_{13}, a_{33}<a_{23}$}\\
{\tiny(ii)~ $a_{12}(a_{33}-a_{23})+a_{13}(a_{22}-a_{32})-(a_{22}a_{33}-a_{23}a_{32})<0$}\\
{\tiny(iii)~$a_{21}(a_{33}-a_{13})+a_{23}(a_{11}-a_{31})-(a_{11}a_{33}-a_{13}a_{31})>0$}
\end{tabular}
 &
    \parbox{2cm}{\vspace{2pt}\includegraphics[width=1.6cm,height=1.4cm]{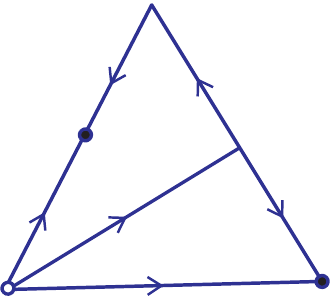}\vspace{2pt}} \\
&&\\
    15 &
\begin{tabular}{l}
 {\tiny(i)~ $a_{11}<a_{21}, a_{11}<a_{31}, a_{22}<a_{12}, a_{22}<a_{32}, a_{33}>a_{13}, a_{33}<a_{23}$}\\
{\tiny(ii)~ $a_{12}(a_{33}-a_{23})+a_{13}(a_{22}-a_{32})-(a_{22}a_{33}-a_{23}a_{32})>0$}\\
{\tiny(iii)~$a_{31}(a_{22}-a_{12})+a_{32}(a_{11}-a_{21})-(a_{11}a_{22}-a_{12}a_{21})<0$}
\end{tabular}
 &
    \parbox{2cm}{\vspace{2pt}\includegraphics[width=1.6cm,height=1.4cm]{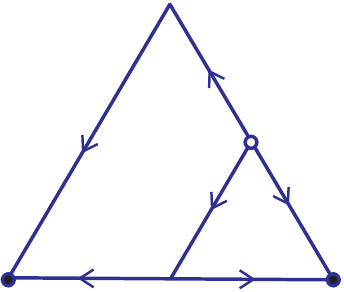}\vspace{2pt}} \\
&&\\
16 &
\begin{tabular}{l}
 {\tiny(i)~ $a_{11}<a_{21}, a_{11}<a_{31}, a_{22}<a_{12}, a_{22}<a_{32}, a_{33}>a_{13}, a_{33}<a_{23}$}\\
{\tiny(ii)~ $a_{12}(a_{33}-a_{23})+a_{13}(a_{22}-a_{32})-(a_{22}a_{33}-a_{23}a_{32})<0$}\\
{\tiny(iii)~$a_{31}(a_{22}-a_{12})+a_{32}(a_{11}-a_{21})-(a_{11}a_{22}-a_{12}a_{21})>0$}
\end{tabular}
 &
    \parbox{2cm}{\vspace{2pt}\includegraphics[width=1.6cm,height=1.4cm]{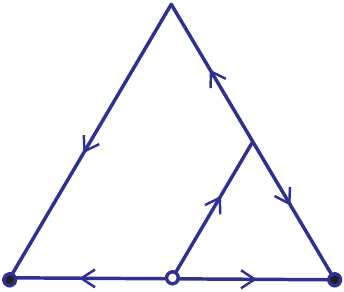}\vspace{2pt}} \\
&&\\
    17 &
\begin{tabular}{l}
 {\tiny(i)~ $a_{11}<a_{21}, a_{11}<a_{31}, a_{22}<a_{12}, a_{22}>a_{32}, a_{33}<a_{13}, a_{33}>a_{23}$}\\
{\tiny(ii)~ $a_{12}(a_{33}-a_{23})+a_{13}(a_{22}-a_{32})-(a_{22}a_{33}-a_{23}a_{32})>0$}\\
{\tiny(iii)~$a_{21}(a_{33}-a_{13})+a_{23}(a_{11}-a_{31})-(a_{11}a_{33}-a_{13}a_{31})<0$}\\
{\tiny(iv)~ $a_{31}(a_{22}-a_{12})+a_{32}(a_{11}-a_{21})-(a_{11}a_{22}-a_{12}a_{21})>0$}
\end{tabular}
 &
    \parbox{2cm}{\vspace{2pt}\includegraphics[width=1.6cm,height=1.4cm]{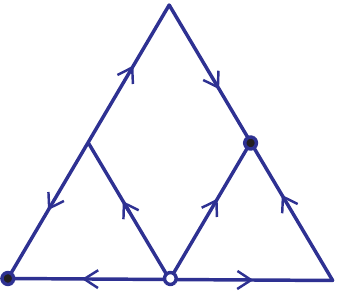}\vspace{2pt}} \\
&&\\
    18 &
\begin{tabular}{l}
 {\tiny(i)~ $a_{11}<a_{21}, a_{11}<a_{31}, a_{22}<a_{12}, a_{22}<a_{32}, a_{33}<a_{13}, a_{33}<a_{23}$}\\
{\tiny(ii)~ $a_{12}(a_{33}-a_{23})+a_{13}(a_{22}-a_{32})-(a_{22}a_{33}-a_{23}a_{32})<0$}\\
{\tiny(iii)~$a_{21}(a_{33}-a_{13})+a_{23}(a_{11}-a_{31})-(a_{11}a_{33}-a_{13}a_{31})<0$}\\
{\tiny(iv)~ $a_{31}(a_{22}-a_{12})+a_{32}(a_{11}-a_{21})-(a_{11}a_{22}-a_{12}a_{21})>0$}
\end{tabular}
 &
    \parbox{2cm}{\vspace{2pt}\includegraphics[width=1.6cm,height=1.4cm]{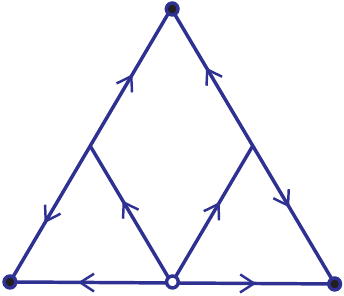}\vspace{2pt}}\\
&&\\
19 &
\begin{tabular}{l}
 {\tiny(i)~ $a_{11}>a_{21}, a_{11}>a_{31}, a_{22}<a_{12}, a_{22}<a_{32}, a_{33}<a_{13}, a_{33}<a_{23}$}\\
{\tiny(ii)~ $a_{12}(a_{33}-a_{23})+a_{13}(a_{22}-a_{32})-(a_{22}a_{33}-a_{23}a_{32})>0$}
\end{tabular}
 &
    \parbox{2cm}{\vspace{2pt}\includegraphics[width=1.6cm,height=1.4cm]{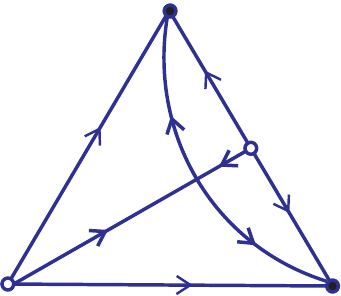}\vspace{2pt}} \\
&&\\
    20 &
\begin{tabular}{l}
 {\tiny(i)~ $a_{11}<a_{21}, a_{11}<a_{31}, a_{22}<a_{12}, a_{22}<a_{32}, a_{33}>a_{13}, a_{33}<a_{23}$}\\
{\tiny(ii)~ $a_{12}(a_{33}-a_{23})+a_{13}(a_{22}-a_{32})-(a_{22}a_{33}-a_{23}a_{32})>0$}\\
{\tiny(iii)~$a_{31}(a_{22}-a_{12})+a_{32}(a_{11}-a_{21})-(a_{11}a_{22}-a_{12}a_{21})>0$}
\end{tabular}
 &
    \parbox{2cm}{\vspace{2pt}\includegraphics[width=1.6cm,height=1.4cm]{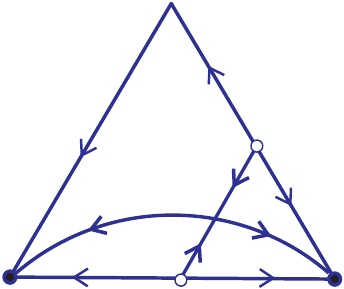}\vspace{2pt}} \\
&&\\
21 &
\begin{tabular}{l}
 {\tiny(i)~ $a_{11}<a_{21}, a_{11}<a_{31}, a_{22}<a_{12}, a_{22}>a_{32}, a_{33}<a_{13}, a_{33}>a_{23}$}\\
{\tiny(ii)~ $a_{12}(a_{33}-a_{23})+a_{13}(a_{22}-a_{32})-(a_{22}a_{33}-a_{23}a_{32})>0$}\\
{\tiny(iii)~$a_{21}(a_{33}-a_{13})+a_{23}(a_{11}-a_{31})-(a_{11}a_{33}-a_{13}a_{31})>0$}\\
{\tiny(iv)~ $a_{31}(a_{22}-a_{12})+a_{32}(a_{11}-a_{21})-(a_{11}a_{22}-a_{12}a_{21})>0$}
\end{tabular}
 &
    \parbox{2cm}{\vspace{2pt}\includegraphics[width=1.6cm,height=1.4cm]{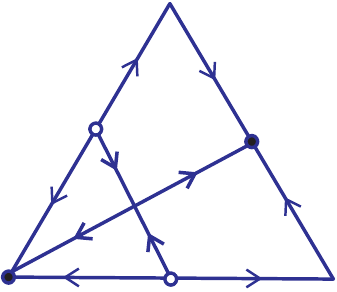}\vspace{2pt}} \\
&&\\
    22 &
\begin{tabular}{l}
 {\tiny(i)~ $a_{11}>a_{21}, a_{11}>a_{31}, a_{22}<a_{12}, a_{22}<a_{32}, a_{33}>a_{13}, a_{33}<a_{23}$}\\
{\tiny(ii)~ $a_{12}(a_{33}-a_{23})+a_{13}(a_{22}-a_{32})-(a_{22}a_{33}-a_{23}a_{32})>0$}\\
{\tiny(iii)~$a_{21}(a_{33}-a_{13})+a_{23}(a_{11}-a_{31})-(a_{11}a_{33}-a_{13}a_{31})>0$}
\end{tabular}
 &
    \parbox{2cm}{\vspace{2pt}\includegraphics[width=1.6cm,height=1.4cm]{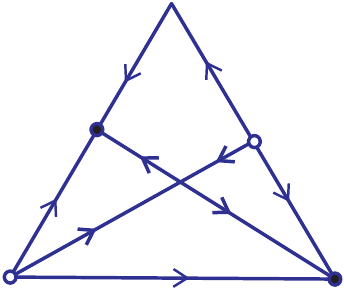}\vspace{2pt}} \\
&&\\
23 &
\begin{tabular}{l}
 {\tiny(i)~ $a_{11}>a_{21}, a_{11}>a_{31}, a_{22}>a_{12}, a_{22}>a_{32}, a_{33}<a_{13}, a_{33}<a_{23}$}\\
{\tiny(ii)~ $a_{31}(a_{22}-a_{12})+a_{32}(a_{11}-a_{21})-(a_{11}a_{22}-a_{12}a_{21})>0$}
\end{tabular}
 &
    \parbox{2cm}{\vspace{2pt}\includegraphics[width=1.6cm,height=1.4cm]{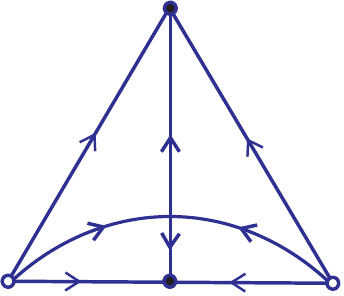}\vspace{2pt}} \\
&&\\
    24 &
\begin{tabular}{l}
 {\tiny(i)~ $a_{11}>a_{21}, a_{11}>a_{31}, a_{22}>a_{12}, a_{22}>a_{32}, a_{33}<a_{13}, a_{33}>a_{23}$}\\
{\tiny(ii)~ $a_{12}(a_{33}-a_{23})+a_{13}(a_{22}-a_{32})-(a_{22}a_{33}-a_{23}a_{32})>0$}\\
{\tiny(iii)~$a_{31}(a_{22}-a_{12})+a_{32}(a_{11}-a_{21})-(a_{11}a_{22}-a_{12}a_{21})>0$}
\end{tabular}
&
    \parbox{2cm}{\vspace{2pt}\includegraphics[width=1.6cm,height=1.4cm]{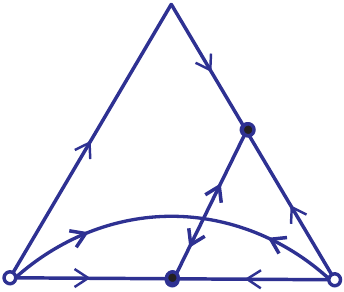}\vspace{2pt}} \\
&&\\
25 &
\begin{tabular}{l}
 {\tiny(i)~ $a_{11}>a_{21}, a_{11}>a_{31}, a_{22}>a_{12}, a_{22}<a_{32}, a_{33}>a_{13}, a_{33}<a_{23}$}\\
{\tiny(ii)~ $a_{12}(a_{33}-a_{23})+a_{13}(a_{22}-a_{32})-(a_{22}a_{33}-a_{23}a_{32})>0$}\\
{\tiny(iii)~$a_{21}(a_{33}-a_{13})+a_{23}(a_{11}-a_{31})-(a_{11}a_{33}-a_{13}a_{31})>0$}\\
{\tiny(iv)~ $a_{31}(a_{22}-a_{12})+a_{32}(a_{11}-a_{21})-(a_{11}a_{22}-a_{12}a_{21})>0$}
\end{tabular}
 &
    \parbox{2cm}{\vspace{2pt}\includegraphics[width=1.6cm,height=1.4cm]{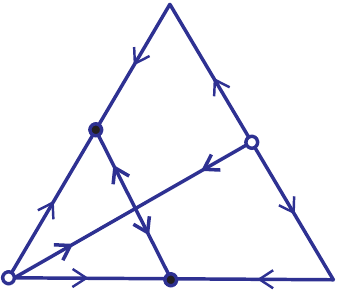}\vspace{2pt}} \\
&&\\
    26~a) &
\begin{tabular}{l}
 {\tiny(i)~ $a_{11}>a_{21}, a_{11}>a_{31}, a_{22}<a_{12}, a_{22}<a_{32}, a_{33}>a_{13}, a_{33}<a_{23}$}\\
{\tiny(ii)~ $a_{12}(a_{33}-a_{23})+a_{13}(a_{22}-a_{32})-(a_{22}a_{33}-a_{23}a_{32})<0$}\\
{\tiny(iii)~$a_{21}(a_{33}-a_{13})+a_{23}(a_{11}-a_{31})-(a_{11}a_{33}-a_{13}a_{31})<0$}\\
{\tiny(a)~ $\theta<0$}
\end{tabular}
 &
    \parbox{2cm}{\vspace{2pt}\includegraphics[width=1.6cm,height=1.4cm]{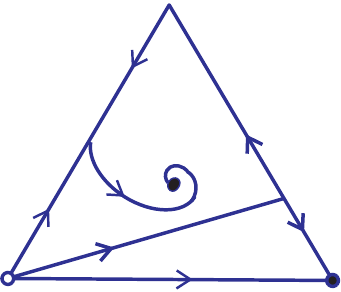}\vspace{2pt}} \\
&&\\
26~b) &
\begin{tabular}{l}
 {\tiny(i)~ $a_{11}>a_{21}, a_{11}>a_{31}, a_{22}<a_{12}, a_{22}<a_{32}, a_{33}>a_{13}, a_{33}<a_{23}$}\\
{\tiny(ii)~ $a_{12}(a_{33}-a_{23})+a_{13}(a_{22}-a_{32})-(a_{22}a_{33}-a_{23}a_{32})<0$}\\
{\tiny(iii)~$a_{21}(a_{33}-a_{13})+a_{23}(a_{11}-a_{31})-(a_{11}a_{33}-a_{13}a_{31})<0$}\\
{\tiny(b)~ $\theta=0$}
\end{tabular}
 &
    \parbox{2cm}{\vspace{2pt}\includegraphics[width=1.6cm,height=1.4cm]{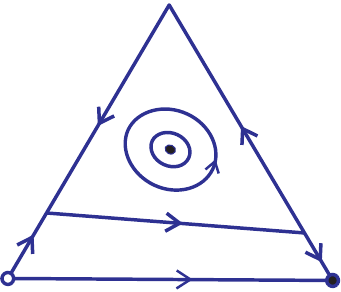}\vspace{2pt}} \\
&&\\
26~c) &
\begin{tabular}{l}
 {\tiny(i)~ $a_{11}>a_{21}, a_{11}>a_{31}, a_{22}<a_{12}, a_{22}<a_{32}, a_{33}>a_{13}, a_{33}<a_{23}$}\\
{\tiny(ii)~ $a_{12}(a_{33}-a_{23})+a_{13}(a_{22}-a_{32})-(a_{22}a_{33}-a_{23}a_{32})<0$}\\
{\tiny(iii)~$a_{21}(a_{33}-a_{13})+a_{23}(a_{11}-a_{31})-(a_{11}a_{33}-a_{13}a_{31})<0$}\\
{\tiny(c)~ $\theta>0$}
\end{tabular}
 &
    \parbox{2cm}{\vspace{2pt}\includegraphics[width=1.6cm,height=1.4cm]{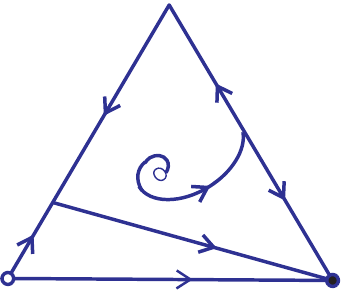}\vspace{2pt}} \\
&&\\
27~a) &
\begin{tabular}{l}
 {\tiny(i)~ $a_{11}>a_{21}, a_{11}<a_{31}, a_{22}<a_{12}, a_{22}>a_{32}, a_{33}>a_{13}, a_{33}<a_{23}$}\\
 {\tiny(a)~ $\theta<0$}
\end{tabular}
 &
    \parbox{2cm}{\vspace{2pt}\includegraphics[width=1.6cm,height=1.4cm]{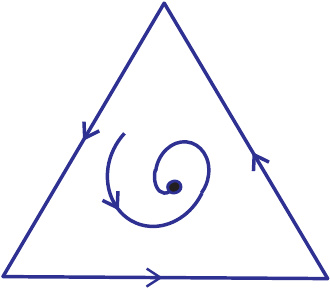}\vspace{2pt}} \\
&&\\
27~b) &
\begin{tabular}{l}
 {\tiny(i)~ $a_{11}>a_{21}, a_{11}<a_{31}, a_{22}<a_{12}, a_{22}>a_{32}, a_{33}>a_{13}, a_{33}<a_{23}$}\\
 {\tiny(b)~ $\theta=0$}
\end{tabular}
 &
    \parbox{2cm}{\vspace{2pt}\includegraphics[width=1.6cm,height=1.4cm]{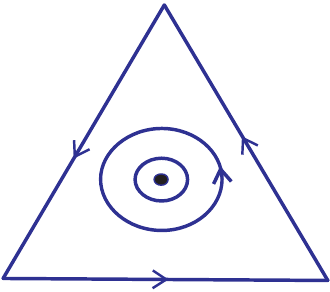}\vspace{2pt}} \\
&&\\
27~c) &
\begin{tabular}{l}
 {\tiny(i)~ $a_{11}>a_{21}, a_{11}<a_{31}, a_{22}<a_{12}, a_{22}>a_{32}, a_{33}>a_{13}, a_{33}<a_{23}$}\\
 {\tiny(c)~ $\theta>0$}
\end{tabular}
 &
    \parbox{2cm}{\vspace{2pt}\includegraphics[width=1.6cm,height=1.4cm]{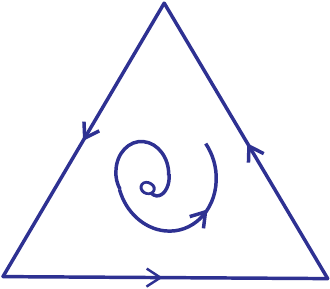}\vspace{2pt}} \\
&&\\
    28 &
\begin{tabular}{l}
 {\tiny(i)~ $a_{11}<a_{21}, a_{11}<a_{31}, a_{22}<a_{12}, a_{22}>a_{32}, a_{33}>a_{13}, a_{33}<a_{23}$}\\
{\tiny(ii)~ $a_{31}(a_{22}-a_{12})+a_{32}(a_{11}-a_{21})-(a_{11}a_{22}-a_{12}a_{21})<0$}
\end{tabular}
&
    \parbox{2cm}{\vspace{2pt}\includegraphics[width=1.6cm,height=1.4cm]{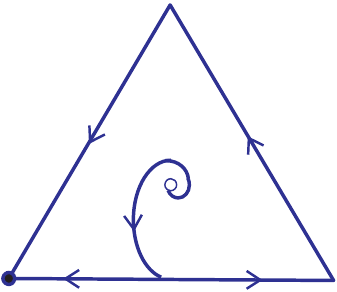}\vspace{2pt}} \\
&&\\
29 &
\begin{tabular}{l}
 {\tiny(i)~ $a_{11}>a_{21}, a_{11}>a_{31}, a_{22}>a_{12}, a_{22}<a_{32}, a_{33}<a_{13}, a_{33}>a_{23}$}\\
{\tiny(ii)~ $a_{31}(a_{22}-a_{12})+a_{32}(a_{11}-a_{21})-(a_{11}a_{22}-a_{12}a_{21})<0$}
\end{tabular}
 &
    \parbox{2cm}{\vspace{2pt}\includegraphics[width=1.6cm,height=1.4cm]{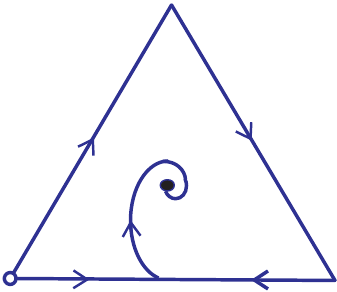}\vspace{2pt}} \\
&&\\
    30 &
\begin{tabular}{l}
{\tiny(i)~ $a_{11}<a_{21}, a_{11}<a_{31}, a_{22}<a_{12}, a_{22}<a_{32}, a_{33}>a_{13}, a_{33}<a_{23}$}\\
{\tiny(ii)~ $a_{12}(a_{33}-a_{23})+a_{13}(a_{22}-a_{32})-(a_{22}a_{33}-a_{23}a_{32})<0$}\\
{\tiny(iii)~$a_{31}(a_{22}-a_{12})+a_{32}(a_{11}-a_{21})-(a_{11}a_{22}-a_{12}a_{21})<0$}
\end{tabular}
&
    \parbox{2cm}{\vspace{2pt}\includegraphics[width=1.6cm,height=1.4cm]{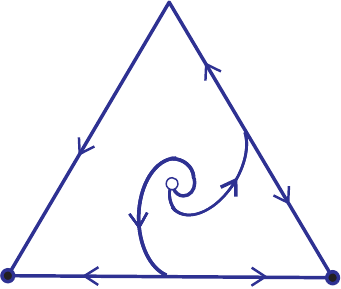}\vspace{2pt}} \\
&&\\
    31 &
\begin{tabular}{l}
{\tiny(i)~ $a_{11}>a_{21}, a_{11}>a_{31}, a_{22}>a_{12}, a_{22}>a_{32}, a_{33}<a_{13}, a_{33}>a_{23}$}\\
{\tiny(ii)~$a_{12}(a_{33}-a_{23})+a_{13}(a_{22}-a_{32})-(a_{22}a_{33}-a_{23}a_{32})<0$}\\
{\tiny(iii)~$a_{31}(a_{22}-a_{12})+a_{32}(a_{11}-a_{21})-(a_{11}a_{22}-a_{12}a_{21})<0$}
\end{tabular}
&
    \parbox{2cm}{\vspace{2pt}\includegraphics[width=1.6cm,height=1.4cm]{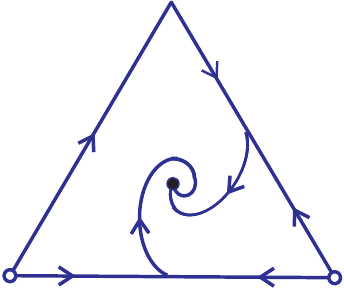}\vspace{2pt}} \\
&&\\
32 &
\begin{tabular}{l}
 {\tiny(i)~ $a_{11}<a_{21}, a_{11}<a_{31}, a_{22}<a_{12}, a_{22}<a_{32}, a_{33}<a_{13}, a_{33}<a_{23}$}\\
{\tiny(ii)~ $a_{12}(a_{33}-a_{23})+a_{13}(a_{22}-a_{32})-(a_{22}a_{33}-a_{23}a_{32})<0$}\\
{\tiny(iii)~$a_{21}(a_{33}-a_{13})+a_{23}(a_{11}-a_{31})-(a_{11}a_{33}-a_{13}a_{31})<0$}\\
{\tiny(iv)~ $a_{31}(a_{22}-a_{12})+a_{32}(a_{11}-a_{21})-(a_{11}a_{22}-a_{12}a_{21})<0$}
\end{tabular}
 &
    \parbox{2cm}{\vspace{2pt}\includegraphics[width=1.6cm,height=1.4cm]{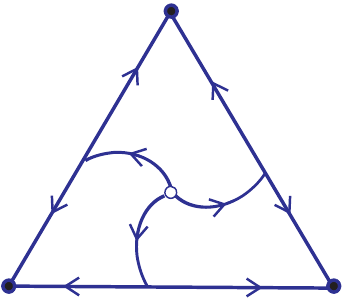}\vspace{2pt}} \\
&&\\
    33 &
\begin{tabular}{l}
 {\tiny(i)~ $a_{11}>a_{21}, a_{11}>a_{31}, a_{22}>a_{12}, a_{22}>a_{32}, a_{33}>a_{13}, a_{33}>a_{23}$}\\
{\tiny(ii)~ $a_{12}(a_{33}-a_{23})+a_{13}(a_{22}-a_{32})-(a_{22}a_{33}-a_{23}a_{32})<0$}\\
{\tiny(iii)~$a_{21}(a_{33}-a_{13})+a_{23}(a_{11}-a_{31})-(a_{11}a_{33}-a_{13}a_{31})<0$}\\
{\tiny(iv)~ $a_{31}(a_{22}-a_{12})+a_{32}(a_{11}-a_{21})-(a_{11}a_{22}-a_{12}a_{21})<0$}
\end{tabular}
&
    \parbox{2cm}{\vspace{2pt}\includegraphics[width=1.6cm,height=1.4cm]{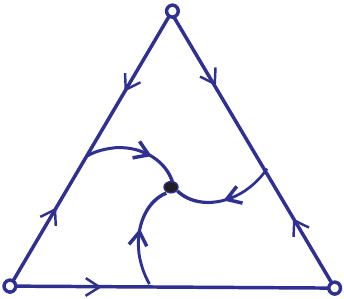}\vspace{2pt}}\\[-8pt]
\label{biao0}
\end{longtable}
\end{center}

\newpage
\section{Appendix B. Turbulent Characteristics: Nonuniqueness and Nonergodicity in Limit for the Time Average Probability Measures}
The time average of transition probability function for each solution weakly converges to an ergodic stationary measure for (\ref{3DSLV}) on the attracting domain of the omega limit set of the orbit for the (\ref{3DLV}) through the same initial point in all classes except class 27 c). But class 27 c) is quite different. If $y\in {\rm Int}\mathbf{R}_+^3\backslash L(P)$, the corresponding time average of transition probability function has infinite weak limit points, which are not ergodic. We will reveal that the essential reason for both peculiar characteristics is that solutions concentrate around $R_1, R_2, R_3$ very long time (approximately infinite) with probability nearly one.

Theorem \ref{produce} tells us that nontrivial stationary measures are produced by the solutions through points in $\Sigma$. So we fix $y\in \Sigma$ with $y\neq P$. In order to prove these by specific estimations, we will consider the symmetric May-Leonard system (\ref{sys1}) with $\alpha=0.8$ and $\beta=1.3$. The other cases are similar.

Firstly, we will prove that limit point of the family
$\{\frac{1}{T}\int_{0}^{T}\delta_{\Psi(t,y)}(\cdot)dt\}_{T>0}$ as $T\rightarrow\infty$,
which is the stationary measure for deterministic system (\ref{sys1}), is not unique.
That is, the weak limit of
\begin{equation}\label{nolim}
  \frac{1}{T}\int_{0}^{T}\delta_{\Psi(t,y)}(\cdot)dt  \stackrel{w}{\rightarrow} \mu(\cdot)
\end{equation}
is not unique.

Let
$$A_{i}=\{y=(y_{1},y_{2},y_{3})\in\Sigma:\|y-R_{i}\|<\frac{1}{2}\}$$
denote the neighborhood of $R_i$ ($i=1, 2, 3$). Then $\Psi(t,y)$ will be spirally asymptotic to $\mathcal{H}$ as the time goes to infinity. Hence, $\Psi(t,y)$ will enter and then depart $A_{i}$ with infinite times.

For $n\geq 2$, define
$$
\begin{array}{rlrl}
  T_{{\rm in}}^1= & \inf\{t\geq0,\ \Psi(t,y)\in A_{1}\},\quad &T_{{\rm out}}^1= & \inf\{t\geq T_{{\rm in}}^1,\ \Psi(t,y)\notin A_{1}\},\\
  [3pt]
  T_{{\rm in}}^n=&\inf\{t\geq T_{{\rm out}}^{n-1},\ \Psi(t,y)\in A_{1}\}, \quad &T_{{\rm out}}^n=&\inf\{t\geq T_{{\rm in}}^n,\ \Psi(t,y)\notin A_{1}\},\\
  [3pt]
  S_{{\rm in}}^1=&\inf\{t\geq T_{{\rm out}}^1,\ \Psi(t,y)\in A_{3}\},\quad &S_{{\rm out}}^1=&\inf\{t\geq S_{{\rm in}}^1,\ \Psi(t,y)\notin A_{3}\},\\
  [3pt]
  S_{{\rm in}}^n=&\inf\{t\geq S_{{\rm out}}^{n-1},\ \Psi(t,y)\in A_{3}\}, \quad &S_{{\rm out}}^n=&\inf\{t\geq S_{{\rm in}}^n,\ \Psi(t,y)\notin A_{3}\}.
\end{array}
$$
Similarly, we denote by $\tau_{{\rm in}}^n$ and $\tau_{{\rm out}}^n$ the time entering and exiting $A_2$ in $n$-th spiral cycle (see Fig. \ref{firsttime}).
\begin{figure}[ht]
 \begin{center}
 \includegraphics[width=0.65\textwidth]{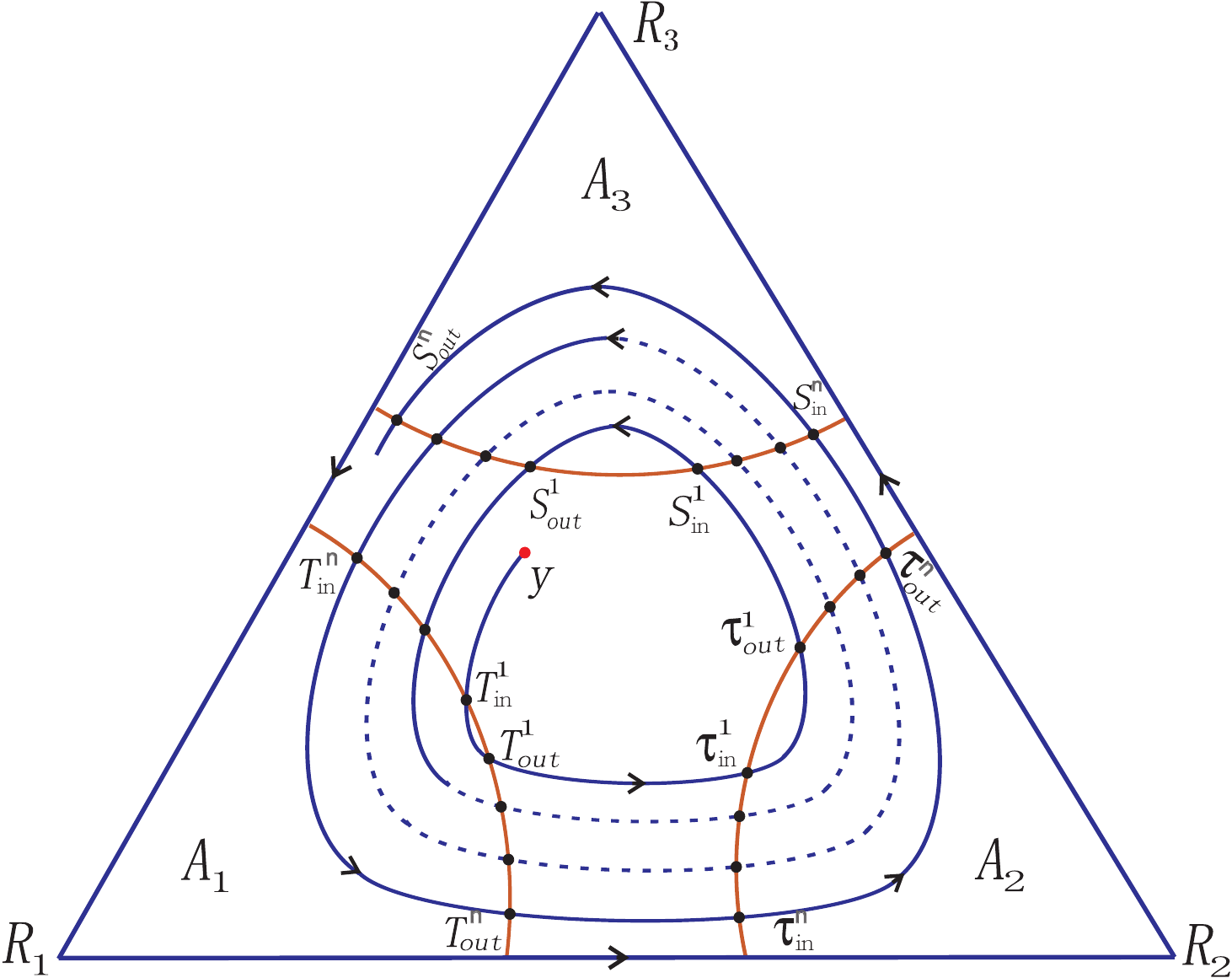}
  \end{center}
\caption{} \label{firsttime}
\end{figure}
By the continuity of $\Psi$, $\tau_{{\rm in}}^n-T_{{\rm out}}^n$, $S_{{\rm in}}^n-\tau_{{\rm out}}^n$ and $T_{{\rm in}}^{n+1}-S_{{\rm out}}^n$ are approximately constants independent of $n$. May and Leonard \cite{May1} showed that the time spent in the neighborhood of $R_i$ is proportional to the total time elapsed up to that stage $t$. In this example, they gave the following estimation:
\begin{equation}\label{estimation}
T_{{\rm out}}^n-T_{{\rm in}}^n\simeq 0.42T_{{\rm out}}^n,\ \tau_{{\rm out}}^n-\tau_{{\rm in}}^n\simeq 0.42\tau_{{\rm out}}^n,\ S_{{\rm out}}^n-S_{{\rm in}}^n\simeq 0.42S_{{\rm out}}^n.
\end{equation}

Choosing two subsequences $\{T_{{\rm out}}^n\}$ and $\{S_{{\rm out}}^n\}$, therefore for sufficiently large $n$, we have
\begin{equation}\label{cruesti1}
   \frac{1}{T_{{\rm out}}^n}\int_{0}^{T_{{\rm out}}^n}\delta_{\Psi(t,y)}(A_{1})dt
= \frac{1}{T_{{\rm out}}^n}\displaystyle\sum_{i=1}^{n}(T_{{\rm out}}^i-T_{{\rm in}}^i) \geq \frac{T_{{\rm out}}^n-T_{{\rm in}}^n}{T_{{\rm out}}^n}=0.42>0,
\end{equation}
\begin{equation}\label{cruesti2}
  \frac{1}{S_{{\rm out}}^n}\int_{0}^{S_{{\rm out}}^n}\delta_{\Psi(t,y)}(A_{1})dt
= \frac{1}{S_{{\rm out}}^n}\displaystyle\sum_{i=1}^{n}(T_{{\rm out}}^i-T_{{\rm in}}^i)
\leq \frac{T_{{\rm out}}^n}{S_{{\rm out}}^n}\leq (0.58)^2\leq 0.34.
\end{equation}
Here we have used the property that $S_{{\rm in}}^n-T_{{\rm out}}^n\simeq 0.42 S_{{\rm in}}^n$, which holds from (\ref{estimation}) and the continuity of $\Psi$.
From (\ref{cruesti1}), (\ref{cruesti2}) and Proposition \ref{PRe}, it easily shows that the limit of (\ref{nolim})
is not unique.

Subsequently, we consider stochastic case (\ref{3DSLV}) where the competitive coefficients are given in symmetric May-Leonard system (\ref{sys1}) with $\alpha=0.8$ and $\beta=1.3$. We will analyze the limit as $T\rightarrow \infty$ for $\Big\{\frac{1}{T}\int_0^{T}I_{A_1}\Big(\Psi(\int_0^tg(s,\omega,1)ds,y)\Big)dt\Big\}_{T>0}$.

Let $\epsilon=0.0001$ and $\Omega_T^\epsilon=\{\omega: \sup_{t\in[T,\infty)}|\frac{1}{t}\int_0^tg(s,\omega,1)ds-1|\leq \epsilon\}$. Then
$\Omega_T^\epsilon\uparrow$ with respect to $T$ and $\lim_{T\rightarrow\infty}\mathbb{P}(\Omega_T^\epsilon)=1$. Thus for $\eta=0.9999$, there exists $T_0>0$ such that
$$
\mathbb{P}(\Omega_T^\epsilon)\geq \eta,\ \ \forall T\geq T_0.
$$

Define $t^n_1(\omega):=\tau(\omega,T_{\rm in}^n)$ and $t^n_2(\omega):= \tau(\omega,T_{\rm out}^n)$ as given in (\ref{stopping}).
 Set $\Omega_{T_0}^n:=\{\omega: t^n_1(\omega)\geq T_0\}$. Then $\Omega_{T_0}^n\uparrow$ with respect to $n$ and $\lim_{n\rightarrow\infty}\mathbb{P}(\Omega_{T_0}^n)=1.$ Thus there exists an $N_0$ such that
$$
\mathbb{P}(\Omega_{T_0}^n)\geq\eta,\ \ \forall n\geq N_0.
$$

\textbf{Step 1.} Let $T_n=T_{\rm out}^n$. We analyze $\frac{1}{T_n}\int_0^{T_n}I_{A_1}\Big(\Psi(\int_0^tg(s,\omega,1)ds,y)\Big)dt$.

For any $n$ satisfying $n\geq N_0$ and $T_n\geq T_0$, choosing  any $\omega\in \Omega_{T_0}^n\cap \Omega^{\epsilon}_{T_0}$, we have
\begin{itemize}
\item[$\bullet$] $(1-\epsilon)T_{\rm out}^n=(1-\epsilon)T_n\leq \int_0^{T_n}g(s,\omega,1)ds\leq(1+\epsilon)T_n =(1+\epsilon)T_{\rm out}^n$,

\item[$\bullet$] $t^n_2(\omega)\geq t^n_1(\omega)\geq T_0$,

\item[$\bullet$] $(1-\epsilon)t^n_1(\omega)\leq \int_0^{t^n_1(\omega)}g(s,\omega,1)ds=T_{\rm in}^n\leq(1+\epsilon)t^n_1(\omega)$,

\item[$\bullet$] $(1-\epsilon)t^n_2(\omega)\leq \int_0^{t^n_2(\omega)}g(s,\omega,1)ds=T_{\rm out}^n\leq(1+\epsilon)t^n_2(\omega)$.
\end{itemize}
Combining the fact that $T_{\rm out}^n-T_{\rm in}^n \simeq 0.42 T_{\rm out}^n$, we have
$$
t^n_1(\omega)\leq T_{\rm out}^n=T_n,\ \ t^n_2(\omega)\geq \frac{T_{\rm out}^n}{1+\epsilon}=\frac{T_n}{1+\epsilon},\ \ \frac{T_{\rm in}^n}{1-\epsilon}\geq t^n_1(\omega).
$$
Hence
\begin{eqnarray*}
&&\frac{1}{T_n}\int_0^{T_n}I_{A_1}\Big(\Psi(\int_0^tg(s,\omega,1)ds,y)\Big)dt\\
&=&
\frac{1}{T_n}\sum_{i=1}^\infty \Big(t_2^i(\omega)\bigwedge T_n-t_1^i(\omega)\bigwedge T_n\Big)\\
&\geq& \frac{t_2^n(\omega)\bigwedge S_n-t_1^n(\omega)}{S_n}\\
&\geq& \frac{\frac{T_{\rm out}^n}{1+\epsilon}-\frac{T_{\rm in}^n}{1-\epsilon}}{T_{\rm out}^n}\\
&\geq& 0.419.
\end{eqnarray*}
Then
\begin{eqnarray}\label{eq 01}
\underline{\lim}_{n\rightarrow\infty}\frac{1}{T_n}\int_0^{T_n}\mathbb{E}I_{A_1}\Big(\Psi(\int_0^tg(s,\omega,1)ds,y)\Big)dt
\geq 0.419\mathbb{P}(\Omega_{T_0}^{N_0}\cap \Omega^{\epsilon}_{T_0})
&\geq& 0.419\times 0.9998\nonumber\\
 &\geq& 0.41.
\end{eqnarray}

\textbf{Step 2.} Let $S_n=S_{\rm out}^n$. We analyze $\frac{1}{S_n}\int_0^{S_n}I_{A_1}\Big(\Psi(\int_0^tg(s,\omega,1),y)\Big)dt$.

For any $n$ satisfying $n\geq N_0$ and $S_n\geq T_0$, choosing any $\omega\in \Omega_{T_0}^n\cap \Omega^{\epsilon}_{T_0}$, we have
\begin{itemize}
\item[$\bullet$] $(1-\epsilon)S_{\rm out}^n=(1-\epsilon)S_n\leq \int_0^{S_n}g(s,\omega,1)ds\leq(1+\epsilon)S_n =(1+\epsilon)S_{\rm out}^n$,

\item[$\bullet$] $t^{n+1}_2(\omega)\geq t^{n+1}_1(\omega)\geq t^n_2(\omega)\geq t^n_1(\omega)\geq T_0$,

\item[$\bullet$] $(1-\epsilon)t^i_1(\omega)\leq \int_0^{t^i_1(\omega)}g(s,\omega,1)ds=T_{\rm in}^i\leq(1+\epsilon)t^i_1(\omega),\ \ \ i=n,n+1$,

\item[$\bullet$] $(1-\epsilon)t^i_2(\omega)\leq \int_0^{t^i_2(\omega)}g(s,\omega,1)ds=T_{\rm out}^i\leq(1+\epsilon)t^i_2(\omega),\ \ \ i=n,n+1$,

\item[$\bullet$] $T_{\rm in}^{n+1}\simeq S^n_{\rm out}$, $S^n_{\rm out}-S^n_{\rm in}\simeq0.42 S^n_{\rm out}$, $S^n_{\rm in}-T_{\rm out}^n\simeq 0.42 S^n_{\rm in}$.
\end{itemize}

Hence,
\begin{itemize}
\item[$\bullet$] $(1-\epsilon)t^n_2(\omega)\leq T_{\rm out}^n\simeq 0.58 S^n_{\rm in}\simeq 0.58^2 S^n_{\rm out}=0.58^2 S_n\Rightarrow t^n_2(\omega)\leq S_n$,

\item[$\bullet$] \begin{eqnarray*}
       && (1-\epsilon)t^{n+1}_1(\omega)\leq T_{\rm in}^{n+1}\simeq S^n_{\rm out}=S_n\\
       &\leq&
       (1+\epsilon)t^{n+1}_1(\omega)\leq \frac{1+\epsilon}{1-\epsilon} T_{\rm in}^{n+1}\simeq 0.58\frac{1+\epsilon}{1-\epsilon} T_{\rm out}^{n+1}\\
       &\leq&
       0.58\frac{(1+\epsilon)^2}{1-\epsilon}t^{n+1}_2(\omega)
       <t^{n+1}_2(\omega),
       \end{eqnarray*}
       that is,
       $$
       (1-\epsilon)t^{n+1}_1(\omega)\leq S_n
       \leq
       (1+\epsilon)t^{n+1}_1(\omega)<t^{n+1}_2(\omega).
       $$
\end{itemize}

Hence
\begin{eqnarray*}
&&\frac{1}{S_n}\int_0^{S_n}I_{A_1}\Big(\Psi(\int_0^tg(s,\omega,1)ds,y)\Big)dt\\
&=&
\frac{1}{S_n}\sum_{i=1}^\infty \Big(t_2^i(\omega)\bigwedge S_n-t_1^i(\omega)\bigwedge S_n\Big)\\
&=&
\frac{1}{S_n}\Big[\sum_{i=1}^{n} \Big(t_2^i(\omega)-t_1^i(\omega)\Big)
  +
  \Big(S_n-t_1^{n+1}(\omega)\bigwedge S_n\Big)\Big]\\
&\leq& \frac{t_2^n(\omega)+ S_n-t_1^{n+1}(\omega)\bigwedge S_n}{S_n}\\
&\leq& \frac{1}{S_n}\Big(\frac{0.58^2}{1-\epsilon}S_n+S_n-\frac{S_n}{1+\epsilon}\Big)\\
&=& \frac{0.58^2}{1-\epsilon}+\frac{\epsilon}{1+\epsilon}<0.34.
\end{eqnarray*}

Then
\begin{eqnarray}\label{eq 02}
&&\overline{\lim}_{n\rightarrow\infty}\frac{1}{S_n}\int_0^{S_n}\mathbb{E}I_{A_1}\Big(\Psi(\int_0^tg(s,\omega,1)ds,y)\Big)dt\nonumber\\
&\leq& 0.34\mathbb{P}(\Omega_{T_0}^{N_0}\cap \Omega^{\epsilon}_{T_0})+\mathbb{P}\Big[(\Omega_{T_0}^{N_0}\cap \Omega^{\epsilon}_{T_0})^c\Big]\nonumber\\
&\leq& 0.342.
\end{eqnarray}
(\ref{eq 01}) and (\ref{eq 02}) imply that $\frac{1}{T}\int_0^{T}\mathbb{E}I_{A_1}\Big(\Psi(\int_0^tg(s,\omega,1),y)\Big)dt$ does not have unique limit as $T\rightarrow\infty$. Equivalently, $\frac{1}{T}\int_0^{T}\mathbb{E}I_{\Lambda(A_1)}\Big(\Phi(t,\omega,,y)\Big)dt$ does not have unique limit as $T\rightarrow\infty$.
%
 \bibliographystyle{spmpsci}
 \bibliography{refs}
%

\end{document}